\newcommand{\N}{{\mathbb N}}
\newcommand{\R}{{\mathbb R}}
\newcommand{\C}{{\mathbb C}}
\newcommand{\Z}{{\mathbb Z}}
\newcommand{\T}{{\mathbb T}}
\newcommand{\kA}{{\mathcal A}}
\newcommand{\kB}{{\mathcal B}}
\newcommand{\kS}{{\mathcal S}}
\newcommand{\kM}{{\mathcal M}}
\newcommand{\kH}{{\mathcal H}}
\newcommand{\kK}{{\mathcal K}}
\newcommand{\kX}{{\mathcal X}}
\newcommand{\kF}{{\mathcal F}}
\newcommand{\kL}{{\mathcal L}}
\newcommand{\kE}{{\mathcal E}}
\newcommand{\La}{\Lambda}
\newcommand{\la}{\lambda}
\newcommand\dom{\operatorname{dom}}
\newcommand{\dist}{\mathop{\rm dist}\nolimits}
\newcommand{\const}{\mathop{\rm const}\nolimits}
\newtheorem{theorem}{Theorem}[section]
 \newtheorem{corollary}[theorem]{Corollary}
 \newtheorem{lemma}[theorem]{Lemma}
 \newtheorem{proposition}[theorem]{Proposition}
 \theoremstyle{definition}
 \newtheorem{definition}[theorem]{Definition}
 \theoremstyle{remark}
 \newtheorem{remark}[theorem]{Remark}
 \newtheorem{example}[theorem]{Example}
 \numberwithin{equation}{section}
\author{L. Golinskii, M. Malamud and L. Oridoroga }
\date{}
\begin{document}
\maketitle

\begin{abstract}
Given a function $f$ on the positive half-line $\R_+$ and a
sequence (finite or infinite) of points $X=\{x_k\}_{k=1}^\omega$ in
$\R^n$, we define and study matrices
$\kS_X(f)=\|f(|x_i-x_j|)\|_{i,j=1}^\omega$ called Schoenberg's
matrices. We are primarily interested in those matrices which
generate bounded and invertible linear operators $S_X(f)$ on $\ell^2(\N)$.
We provide conditions on $X$ and
$f$ for the latter to hold. If $f$ is an $\ell^2$-positive definite
function, such conditions are given in terms of the Schoenberg
measure $\sigma(f)$. We also approach Schoenberg's matrices from the
viewpoint of harmonic analysis on $\R^n$, wherein the notion of the
strong $X$-positive definiteness plays a key role. In
particular, we prove that \emph{each radial $\ell^2$-positive definite
function is strongly  $X$-positive definite} whenever $X$ is
separated. We also implement a ``grammization'' procedure
for certain positive definite Schoenberg's matrices. This leads to Riesz--Fischer and
Riesz sequences (Riesz bases in their linear span) of the form $\kF_X(f)=\{f(x-x_j)\}_{x_j\in X}$
for certain radial functions $f\in L^2(\R^n)$. Examples of Schoenberg's operators with various spectral
properties are presented.
\end{abstract}

\textbf{Mathematics  Subject  Classification (2010)}.
 42A82, 42B10, 33C10, 47B37 \\

\textbf{Key  words}. Infinite matrices, Schur test,
Toeplitz operators, Riesz bases, completely monotone functions, Gramm matrices

\renewcommand{\contentsname}{Contents}
\tableofcontents

\section{Introduction}

Positive definite functions have a long history, entering as an important chapter in all treatments
of harmonic analysis. They can be traced back to papers of Carath\'eodory, Herglotz, Bernstein, culminating
in Bochner's celebrated theorem from 1932--1933. See definitions in Section \ref{prelimpdf}.

In this paper we will be dealing primarily with radial positive definite functions (RPDF).
RPDF's have significant applications in probability theory, statistics, and approximation theory,
where they occur as the characteristic functions or Fourier transforms of spherically symmetric probability
distributions, the covariance functions of stationary and isotropic random fields, and the radial basis functions
in scattered data interpolation. We denote the class of RPDF's by $\Phi_n$.

We stick to the standard notation for the inner product $(u,v)_n=(u,v)=u_1v_1+\ldots+u_nv_n$ of two vectors
$u=(u_1,\ldots,u_n)$ and $v=(v_1,\ldots,v_n)$ in $\R^n$, and $ |u|_n=|u|=\sqrt{(u,u)} $ for the Euclidean norm of~$u$.
We want to emphasize from the outset that throughout the whole paper $n$ is an arbitrary and fixed positive integer.

   \begin{definition} Let $n\in \N$.
A real-valued and continuous function $f$ on $\R_+=[0,\infty)$ is called a {\it radial positive definite function}, if
for an arbitrary finite set $\{x_1,\dots,x_m\}$, $x_k\in \R^n$, and $\{\xi_1,\dots,\xi_m\}\in\C^m$
\begin{equation}\label{positiv}
\sum_{k,j=1}^{m} f(|x_k-x_j|)\xi_j\overline{\xi}_k\ge 0.
\end{equation}
\end{definition}

The characterization of radial positive definite functions is a classical
result due to I. Schoenberg \cite{Sch38, Sch38_1} (see, e.g., \cite[Theorem~5.4.2]{Akh65}).

  \begin{theorem}\label{schoenbergtheorem}
 A function  $f\in\Phi_n$, $f(0)=1$, if and only if there exists
 a probability measure $\nu$ on $\R_+$ such that
  \begin{equation}\label{schoenberg1}
f(r) = \int_{0}^{\infty}\Omega_n(rt)\,\nu(dt), \qquad r\in \R_+,
   \end{equation}
where
  \begin{equation}\label{kernel}
  \Omega_n(s):=\Gamma(q+1)\,
  \left(\frac{2}{s}\right)^q\,J_q(s)=
  \sum_{j=0}^{\infty}\frac{\Gamma(q+1)}{j!\,\Gamma(j+q+1)}\,
  \left(-\frac{s^2}{4}\right)^j, \quad q:=\frac{n}2-1,
 \end{equation}
$J_q$ is the Bessel function of the first kind and order $q$. Moreover,
   \begin{equation}\label{fouriersphere}
\Omega_n(|x|) = \int_{S^{n-1}}e^{i(u,x)}\sigma_n(du), \qquad
x\in\R^n,
  \end{equation}
where $\sigma_n$ is the normalized surface measure on the unit sphere
$S^{n-1}\subset\R^n$.
  \end{theorem}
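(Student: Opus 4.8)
The plan is to treat formula \eqref{fouriersphere} as the linchpin of the whole argument: once the spherical average $\int_{S^{n-1}} e^{i(u,x)}\,\sigma_n(du)$ is identified with $\Omega_n(|x|)$, both implications will follow from Bochner's theorem together with a decomposition of rotation-invariant measures on $\R^n$ into their radial and spherical parts. I would therefore begin by proving \eqref{fouriersphere}. Since $\sigma_n$ is rotation-invariant, this integral depends only on $r=|x|$, so I may take $x=(r,0,\dots,0)$ and reduce to $\int_{S^{n-1}} e^{iru_1}\,\sigma_n(du)$. Slicing $S^{n-1}$ by the height $u_1=\cos\theta$ turns this into a one-dimensional integral proportional to $\int_0^\pi e^{ir\cos\theta}(\sin\theta)^{n-2}\,d\theta$, which is precisely Poisson's integral representation of $J_q$ with $q=n/2-1$. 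Comparing the $r\to 0$ behaviour of the two sides (both tend to $1$, using $J_q(r)\sim(r/2)^q/\Gamma(q+1)$) fixes the normalizing constant and yields \eqref{kernel}.

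With \eqref{fouriersphere} in hand the sufficiency (the ``if'' part) is immediate. Given \eqref{schoenberg1}, for any finite $\{x_k\}\subset\R^n$ and $\{\xi_k\}\subset\C$ I would substitute \eqref{fouriersphere} with $x$ replaced by $t(x_k-x_j)$ and apply Fubini to obtain
\begin{equation*}
\sum_{k,j} f(|x_k-x_j|)\,\xi_j\overline{\xi_k}
=\int_0^\infty\!\!\int_{S^{n-1}}\Bigl|\sum_k \xi_k\,e^{-it(u,x_k)}\Bigr|^2\,\sigma_n(du)\,\nu(dt)\ge 0,
\end{equation*}
so $f\in\Phi_n$; continuity is clear, and $f(0)=\int_0^\infty\Omega_n(0)\,\nu(dt)=\nu(\R_+)=1$.

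The substance of the theorem is the necessity (the ``only if'' part). Given $f\in\Phi_n$ with $f(0)=1$, the defining inequality \eqref{positiv} says precisely that the continuous radial function $F(x):=f(|x|)$ is positive definite on $\R^n$, so Bochner's theorem furnishes a finite nonnegative measure $\mu$ on $\R^n$ with $F(x)=\int_{\R^n} e^{i(u,x)}\,\mu(du)$ and total mass $\mu(\R^n)=F(0)=1$. Because $F$ is invariant under every rotation $R$ and the Fourier transform is injective on finite measures, $\mu$ must itself be rotation-invariant. I would then disintegrate $\mu$ along $u\mapsto(|u|,u/|u|)$: rotation invariance forces the spherical factor to be the normalized surface measure $\sigma_n$, leaving a radial factor $\nu$ on $\R_+$ (the pushforward of $\mu$ under $u\mapsto|u|$), so that $\int g\,d\mu=\int_0^\infty\bigl(\int_{S^{n-1}} g(t\theta)\,\sigma_n(d\theta)\bigr)\,\nu(dt)$. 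Applying this with $g(u)=e^{i(u,x)}$ and invoking \eqref{fouriersphere} gives $f(|x|)=\int_0^\infty\Omega_n(t|x|)\,\nu(dt)$, which is \eqref{schoenberg1}, while $\nu(\R_+)=\mu(\R^n)=1$ makes $\nu$ a probability measure.

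The main obstacle I anticipate lies in the measure-theoretic core of the necessity argument: deducing rotation invariance of $\mu$ from injectivity of the Fourier transform and then rigorously justifying the disintegration $\mu=\nu\otimes\sigma_n$ (measurability of the radial projection, integrability of the slices, and the harmless contribution of a possible atom at the origin, where $\Omega_n(0)=1$). The Bessel computation behind \eqref{fouriersphere} is classical but demands care with the surface-area constants; once \eqref{fouriersphere} is established, everything else reduces to the short quadratic-form identity above.
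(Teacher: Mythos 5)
Your proposal is correct, and it follows essentially the same route as the classical proof the paper relies on: the paper states this theorem without proof (citing Schoenberg and \cite[Theorem 5.4.2]{Akh65}), and the approach behind that citation is exactly yours --- establish \eqref{fouriersphere} via the Poisson integral for $J_q$, get sufficiency from the resulting quadratic-form identity, and get necessity from Bochner's theorem plus rotation invariance of $\mu$ and its polar disintegration. Indeed, the paper's own remark following Theorem \ref{boch}, that the measures are related by $\nu\{[0,r]\}=\mu\{|x|\le r\}$, is precisely the radial pushforward correspondence at the heart of your necessity argument.
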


The first three functions $\Omega_n$, $n=1,2,3,$ can be computed as
 \begin{equation}\label{3.13}
   \Omega_1(s)=\cos s,\quad  \Omega_2(s)=J_0(s),\quad  \Omega_3(s)=\frac{{\rm sin} s}{s}\,.
 \end{equation}

\medskip

The main object under consideration in this paper arises from the definition of RPDF's.

\begin{definition}\label{defboch}
Let $X=\{x_k\}_{k=1}^\omega\subset \R^n$ be a (finite or infinite) set
of distinct points in $\R^n$ and let $f$ be a real-valued function defined on
the right half-line $\R_+$. A matrix (finite or infinite)
\begin{equation}\label{bochmatr}
\kS_X(f):=\|f(|x_i-x_j|)\|_{i,j=1}^\omega, \qquad \omega\le\infty,
\end{equation}
will be called a {\it Schoenberg matrix} generated by the set $X$
and the function $f$. This function is referred to as the {\it
Schoenberg symbol}.
\end{definition}
It is clear that $\kS_X(f)$ is a Hermitian (real symmetric) matrix. By the definition, a function $f\in\Phi_n$
if for each finite set $X\subset\R^n$ the Schoenberg matrix $\kS_X(f)$ is nonnegative, $\kS_X(f)\ge0$.

\smallskip

We undertake a detailed study of Schoenberg's matrices from two different points of view.
The first one, considered in Section \ref{shoenoper}, comes from operator theory.

If the columns of $\kS_X(f)$  are in $\ell^2 := \ell^2(\N)$, then one
can  associate  a minimal symmetric operator $S_X(f)$ with
$\kS_X(f)$ in a natural way. We call it a {\it Schoenberg operator}. If $S_X(f)$ appears to be bounded,
a matrix $\kS_X(f)$ (admitting some abuse of language) will be called bounded.
The  \emph{first  main goal of the paper is to  find necessary and
sufficient conditions on $X$ and $f$, which ensure that the matrix
$\kS_X(f)$ is  bounded}. We also suggest conditions on $X$ and $f$ for $S_X(f)$ to be
invertible, i.e., to have a bounded inverse.

Throughout the paper we always assume that $X$ is a {\it separated set}, i.e.,
\begin{equation}\label{separat}
d_*=d_*(X):=\inf_{i\not=j}|x_i-x_j|>0,
\end{equation}
(the term {\it uniformly discrete} is also in common usage). We denote by $\kX=\kX_n$ the class of all separated sets
$X\subset\R^n$ and by $\kL=\kL(X)$ a linear span of $X$, a subspace in $\R^n$ of dimension $d=d(X)=\dim\kL\le n$.
With no loss of generality we can assume that $x_1=0$.

Next,  denote by $\kM_+$ the following class of functions:
\begin{equation}\label{function}
f\in\kM_+: \ \ f\ge 0, \qquad f\downarrow, \qquad f(0)=1.
\end{equation}

With this preparation our main result on boundedness  of $\kS_X(f)$
reads as follows.
   \begin{theorem}\label{intrbochbound}
Let $f\in\kM_+$, $X\in\kX_n$ and let $d = \dim\kL(X)$.
\begin{itemize}
  \item [\em (i)]
 If $t^{d-1}f(\cdot)\in L^1(\R_+)$, then the Schoenberg matrix  $\kS_X(f)$ is bounded
on $\ell^2$ and
\begin{equation}
\|S_X(f)\|\le 1+d^2\biggl(\frac{5}{d_*(X)}\biggr)^d\,\int_0^\infty
t^{d-1}\,f(t)\,dt.
\end{equation}
\item [\em (ii)]
Moreover, $S_X(f)$ has a bounded inverse  whenever, in addition,
\begin{equation}\label{boundforinverse}
d_*(X)>5d^{2/d}\,\|t^{d-1}f\|_{L^1(\R_+)}^{1/d}.
\end{equation}
  \item [\em (iii)]
Conversely, let $S_Y(f)$ be bounded for at least one
$\delta$-regular set $Y$. Then $t^{d-1}f(\cdot)\in L^1(\R_+)$.
\end{itemize}
\end{theorem}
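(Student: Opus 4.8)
The plan is to obtain (i) from the Schur test, to deduce (ii) from (i) by a Neumann series argument, and to establish the converse (iii) by testing the quadratic form of $\kS_Y(f)$ against indicator vectors. Since $f\ge0$, the matrix $\kS_X(f)$ has nonnegative entries, and being symmetric its row and column sums coincide; the Schur test therefore reduces the boundedness of $S_X(f)$ (and simultaneously bounds its norm) to a uniform estimate of the row sums
\[
\sup_i\ \sum_{j}f(|x_i-x_j|).
\]
Splitting off the diagonal term $f(0)=1$, it remains to estimate $\sum_{j\ne i}f(|x_i-x_j|)$ uniformly in $i$.

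The geometric heart of (i) is a packing estimate. Fix $i$ and set $N_i(s):=\#\{j\ne i:\ |x_i-x_j|\le s\}$. Because $X\subset\kL(X)$ lies in a $d$-dimensional subspace and is $d_*$-separated, the balls $B(x_j,d_*/2)$ are pairwise disjoint and, whenever $|x_i-x_j|\le s$, are contained in $B(x_i,s+d_*/2)$; comparing $d$-dimensional volumes gives $N_i(s)\le(2s/d_*+1)^d$, with $N_i(s)=0$ for $s<d_*$. Writing $f(r)=\mu([r,\infty))$ with $\mu:=-df\ge0$ (here $f(\infty)=0$, forced by $t^{d-1}f\in L^1$), Fubini yields the two identities
\[
\sum_{j\ne i}f(|x_i-x_j|)=\int_0^\infty N_i(s)\,d\mu(s),\qquad \int_0^\infty t^{d-1}f(t)\,dt=\tfrac1d\int_0^\infty s^d\,d\mu(s).
\]
Inserting the packing bound into the first integral and comparing with the second produces an estimate of the form $C_d\,d_*^{-d}\,\|t^{d-1}f\|_{L^1}$ with an explicit dimensional constant; careful bookkeeping of the packing count (which absorbs the endpoint $s=d_*$ and the diagonal contribution) is what delivers the stated value $C_d=d^2 5^d$ and hence the asserted bound $1+d^2(5/d_*)^d\|t^{d-1}f\|_{L^1}$. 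I expect the main technical effort of the theorem to be concentrated exactly in pinning down these constants.

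Part (ii) is then immediate. Since $f(0)=1$, the diagonal of $\kS_X(f)$ is the identity, so $S_X(f)=I+R$, where $R$ is the off-diagonal part with entries $f(|x_i-x_j|)$, $i\ne j$. Applying the Schur estimate of (i) to $R$ alone (i.e.\ discarding the diagonal contribution $1$) gives $\|R\|\le d^2(5/d_*)^d\|t^{d-1}f\|_{L^1}$, and the hypothesis $d_*>5d^{2/d}\|t^{d-1}f\|_{L^1}^{1/d}$ is precisely the assertion $\|R\|<1$. Hence $S_X(f)=I+R$ is boundedly invertible via the Neumann series.

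For the converse (iii), assume $S_Y(f)$ is bounded. The key is to test against normalized indicator vectors: for any finite $F\subset Y$, nonnegativity and the compression inequality $\|P_F\,\kS_Y(f)\,P_F\|\le\|S_Y(f)\|$ give
\[
\sum_{i,j\in F}f(|y_i-y_j|)=\bigl\langle \kS_Y(f)\,\mathbf 1_F,\mathbf 1_F\bigr\rangle\le\|S_Y(f)\|\,|F|.
\]
Choosing $F=Y\cap B(y_0,R)$ and restricting the inner sum to the central points $i\in Y\cap B(y_0,R/2)$ (for which $B(y_i,R/2)\subset B(y_0,R)$), the $\delta$-regularity of $Y$ supplies the matching bounds $|F|\lesssim R^d$ together with the lower count $\#\{j:|y_i-y_j|\le s\}\gtrsim s^d$. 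Combined with the monotonicity of $f$, the left-hand side is bounded below by $c\,R^d\int_0^{R/2}t^{d-1}f(t)\,dt$, while the right-hand side is $\lesssim R^d$; dividing by $R^d$ and letting $R\to\infty$ forces $\int_0^\infty t^{d-1}f(t)\,dt<\infty$. The delicate point here is to use the regularity of $Y$ to turn $\sum_j f(|y_i-y_j|)$ into a genuine Riemann lower sum for $\int t^{d-1}f$, which needs both the lower density and the separation that caps the number of terms near each radius.
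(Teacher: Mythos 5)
Your proposal is correct and follows essentially the same route as the paper: Schur test plus a volume-packing count of the points of $X$ within distance $s$ of a fixed point for (i), the Neumann-series perturbation $S_X(f)=I+R$, $\|R\|<1$ for (ii), and testing against normalized indicators of $Y\cap B(y_0,R)$ combined with $\delta$-regularity and the two-sided ball counts \eqref{pointsinball} for (iii). The only deviation is cosmetic: where the paper organizes the row-sum estimate through discrete spherical layers of width $d_*$ and the sum-versus-integral comparison (Lemmas \ref{layer} and \ref{techlemma}), you use the layer-cake representation $f(r)=\mu([r,\infty))$ with Fubini, which in fact produces the slightly better constant $d\,3^d$ in place of $d^2 5^d$, so the stated bounds in (i) and (ii) follow a fortiori.
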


Concerning regular sets see Definition \ref{regularset}. For instance, $X= \delta\Z^n$  is $\delta$-regular.

In particular, Theorem \ref{intrbochbound}  completely describes bounded operators $S_X(f)$  with symbols  $f$ from the classes
$\Phi_{\infty}(\alpha)$ defined below in Section \ref{Subcl_Alpha_prelimpdf}.

We also discuss the Fredholm property of the Schoenberg operators, precisely, the case when $S_X(f)=I+T$,
$T$ is a compact operator on $\ell^2$.

An interesting example of general Schoenberg operators arises when the set $X$ is a Toeplitz sequence, that is, $|x_i-x_j|=|i-j|$ for
all $i,j\in\N$. Such operators will be called the {\it Schoenberg--Toeplitz operators}. We obtain necessary and sufficient conditions for the
Schoenberg--Toeplitz operators with special symbols to be bounded and describe their spectra  in terms of Schoenberg's symbols. We show
that such (possibly unbounded) operators are always self-adjoint.

\medskip

Our second viewpoint on Schoenberg's matrices is related to harmonic analysis on $\R^n$.

The main result of Section \ref{classes} is related to the notion of the strong $X$-positive definiteness.

   \begin{definition}\label{def-X-posit_definit}
Let $f\in\Phi_n$ and $X=\{x_k\}_{k\in\N}\subset\R^n$. We say that $f$ is {\it strongly $X$-positive definite}
(or the Schoenberg matrix $\kS_X(f)$ is positive definite)
if for each set $\xi=\{\xi_1,\dots,\xi_m\}\in \C^m\setminus\{0\}$
and any finite set $\{x_j\}_{j=1}^m$  of distinct points $x_j\in X$
there exists a constant $c=c(X)>0$, independent of $\xi$ and $m$ such that
   \begin{equation}\label{stronglyxpositive}
\sum_{k,j=1}^{m}f(|x_k-x_j|){\xi}_j\overline{\xi_k} \ge c \sum_{k=1}^m |\xi_k|^2.
   \end{equation}
The same definition with obvious changes applies to general (not necessarily radial) positive definite functions.

We say that $f$ is {\it strictly $X$-positive definite} if for each
$m\in \N$ and $\xi=\{\xi_1,\dots,\xi_m\}\in \C^m\setminus\{0\}$
inequality  \eqref{stronglyxpositive} holds with $c=0.$
    \end{definition}

Equivalently, $f$ is strictly $X$-positive definite, if for any
finite subset $Y\subset X$ the Schoenberg matrix $\kS_Y(f)$ is
non-singular, i.e., the minimal eigenvalue $\lambda_{min}(\kS_Y(f))$
of $\kS_Y(f)$ is positive, and strongly  $X$-positive definite, if
$\kS_Y(f)$ are ``uniformly positive definite'', that is,
$$
\inf_{Y\subset X} \lambda_{min}(\kS_Y(f))>0,
$$
where the infimum is taken over all finite subsets $Y\subset X$.

The notion of strong $X$-positive definiteness makes sense for any $f\in\Phi_n$ regardless of whether
the Schoenberg operator $S_X(f)$ is defined or not. In the former case
\emph{the strong $X$-positive definiteness of $f$ is identical to positive
definiteness of $S_X(f)$}, i.e., validity of the inequality
  \begin{equation}\label{1.10Intro}
\bigl(S_{X}(f)h, h)\ge\varepsilon|h|^2, \qquad h \in\dom
S_X(f)\subset \ell^2,\qquad \varepsilon>0.
    \end{equation}
with some $\varepsilon>0$ independent of $h.$
So Definition \ref{def-X-posit_definit}  merely extends a property \eqref{1.10Intro} of
$S_X(f)$, when the latter exists, to the case of an arbitrary Schoenberg
matrix $\kS_X(f)$, not necessarily generating an operator in $\ell^2$.

Each strongly $X$-positive definite function $f$  is also strictly $X$-positive definite.
For finite sets $X$  both notions are equivalent due to the compactness of the balls in $\C^m$.
The following problem seems to be important and difficult.

{\bf{Problem I.}} {\it Let $f$ be a radial positive definite function on
$\R^n$. Characterize those countable subsets $X$ of $\R^n$ for which
$f$ is strongly $X$-positive definite. }

It was proved in \cite{sun93} (see also  \cite[Theorem 3.6]{GMZ11})
that each function $f\in\Phi_n$, $n\ge2$, is strictly $X$-positive
definite for any set $X$ of distinct points in $\R^n$. This fact has
been heavily exploited in \cite{GMZ11} for  investigation of certain
spectral properties of $2D$ and $3D$ Schr\"odinger operator with a
{\it finite number} of point interactions.  On the other hand, if a
radial positive definite function is $X$-strongly positive definite,
then $X$ is necessarily separated (see Proposition \ref{ftsepar}).

Our \emph{second main goal  is to  give  a partial solution to Problem I}.
Heading to the solution of this problem we prove the following result.
\begin{theorem}\label{Intro_stronglydef}
Let $(\const \not = )f\in\Phi_n$, $n\ge2$, with the representing measure $\nu = \nu(f)$
from $\eqref{schoenberg1}$. If $\nu$ is equivalent to the Lebesgue measure on $\R_+$, then $f$ is strongly
$X$-positive definite for each $X\in\kX_n$.
   \end{theorem}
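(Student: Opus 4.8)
The plan is to turn the quadratic form into an honest $L^2$-norm and then to extract a uniform lower Riesz bound. First I would feed the Schoenberg representation \eqref{schoenberg1} together with the spherical formula \eqref{fouriersphere} into the left-hand side of \eqref{stronglyxpositive}. Writing $r_{kj}=|x_k-x_j|$ and using $\Omega_n(t r_{kj})=\int_{S^{n-1}}e^{it(u,x_k-x_j)}\sigma_n(du)$, Fubini gives, for any finite $\{x_j\}_{j=1}^m\subset X$,
\[
\sum_{k,j=1}^m f(r_{kj})\,\xi_j\overline{\xi_k}
=\int_0^\infty\!\!\int_{S^{n-1}}\Bigl|\sum_{j=1}^m\xi_j e^{-it(u,x_j)}\Bigr|^2\sigma_n(du)\,\nu(dt).
\]
Since $\nu$ is equivalent to Lebesgue measure we may write $\nu(dt)=w(t)\,dt$ with $w>0$ a.e.; passing to polar coordinates $y=tu\in\R^n$ turns the right-hand side into $\int_{\R^n}|F(y)|^2\rho(|y|)\,dy$, where $F(y)=\sum_j\xi_j e^{-i(y,x_j)}$ and $\rho(r)=w(r)\bigl(|S^{n-1}|\,r^{n-1}\bigr)^{-1}>0$ a.e. Thus the claim is equivalent to a uniform lower bound $\int_{\R^n}|F|^2\rho\ge c(X)\sum_k|\xi_k|^2$ for all exponential sums with separated frequencies $X\in\kX_n$, i.e. to the Riesz--Fischer property of $\{e^{-i(\cdot,x_j)}\}$ in $L^2(\R^n,\rho\,dy)$.

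The natural route to the lower bound is to minorize $f$ in the sense of representing measures. If I can produce $h\in\Phi_n$ with $\nu_h\le\nu_f$, then $f-h\in\Phi_n$, hence $\kS_Y(f)\ge\kS_Y(h)$ for every finite $Y\subset X$, and it suffices to bound $\kS_Y(h)$ below. If in addition $h$ decays fast, separation closes the estimate: the number of points of $X$ in an annulus $\{Rd_*\le|x-x_k|<(R+1)d_*\}$ is $O(R^{n-1})$, so for $h$ of Schwartz-type decay the off-diagonal sum $\sup_k\sum_{j\ne k}|h(r_{kj})|$ is finite, and after a dilation $h\mapsto h(\cdot/\delta)$ (which keeps $h\in\Phi_n$) it drops below $h(0)$. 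Diagonal dominance then yields $\kS_Y(h)\ge\bigl(h(0)-\sup_k\sum_{j\ne k}|h(r_{kj})|\bigr)I\ge cI$ uniformly in $Y$, which is exactly strong $X$-positive definiteness.

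The hard part is the compatibility of the two requirements on $h$. Fast (summable) decay of $h(r)=\int_0^\infty\Omega_n(rt)\,w_h(t)\,dt$ forces the density $w_h$ to be smooth, whereas $\nu_h\le\nu_f$ forces $0\le w_h\le w$ pointwise; but $\nu\sim dt$ only gives $w>0$ a.e., and $w$ may have essential infimum $0$ on every interval, so no continuous positive minorant of $w$ need exist and the naive construction breaks down. This is precisely the point at which $n\ge2$ must enter. The amplitude of $\Omega_n(s)$ decays only like $s^{-(n-1)/2}$, which is never summable over an $n$-dimensional separated set, so a term-by-term (triangle-inequality) bound on the off-diagonal contribution is hopeless and the oscillation of the Bessel kernel must be exploited through cancellation inside the quadratic form. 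Concretely, I would keep the form $\int_0^\infty I(t)\,w(t)\,dt$ intact, with $I(t)=\int_{S^{n-1}}|F(tu)|^2\sigma_n(du)\ge0$, and combine (a) the mean-value bound $\bigl|\int_0^S\Omega_n(s)\,ds\bigr|\le C_n$ valid for $n\ge2$ (so that $\tfrac1T\int_0^T I\to\sum_k|\xi_k|^2$ and, for fixed finite $\xi$, $I(t)\to\sum_k|\xi_k|^2$ as $t\to\infty$, which already gives the known strict positive definiteness since $w>0$ a.e.), with (b) the positivity of $w$ on sets of positive measure, upgrading this non-uniform limit to a uniform bound by a large-sieve / Beurling--Selberg extremal-function argument against the weight $w$. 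I expect this cancellation step to be the technical core of the proof; it is also exactly what separates the uniform statement from the non-uniform one, since for $n=1$ (where $\Omega_1=\cos$ does not decay) even strict $X$-positive definiteness can fail.
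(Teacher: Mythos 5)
Your setup is sound and in fact coincides with the paper's starting point: the identity you derive is exactly \eqref{intfxkxjAA}, and your diagnosis that a pointwise minorization $\nu_h\le\nu_f$ by a fast-decaying $h\in\Phi_n$ cannot work (since a density that is merely positive a.e.\ admits no continuous positive minorant) is correct. But at the decisive moment the proposal stops being a proof: the passage from strict to \emph{strong} $X$-positive definiteness is delegated to an unspecified ``large-sieve / Beurling--Selberg extremal-function argument against the weight $w$,'' which you yourself label as the expected technical core. Nothing in the proposal indicates how such an argument would go, and it is far from routine: what is needed is a lower bound for $\int_{\R^n}|F|^2\rho\,dy$, uniform over all exponential sums $F$ with frequencies in $X$ and over all lengths $m$, where $\rho$ is merely positive a.e.; classical sampling and large-sieve estimates give lower bounds against weights bounded below on a ball (or against Fej\'er-type minorants), not against an arbitrary a.e.\ positive density, and bridging that gap is precisely the content of the theorem. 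So there is a genuine hole exactly where the main argument should be.

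The idea you are missing is the paper's transference principle, which replaces your domination requirement $\nu_h\le\nu_f$ by mere equivalence of measure \emph{classes}. By Proposition \ref{rieszfisher}, $f$ is strongly $X$-positive definite if and only if there exists a Borel set $\kK\subset(0,\infty)$ with $\nu(\kK)>0$ such that the spherical exponential systems $\kE_X(S_r^{n-1})$ are Riesz--Fischer sequences for every $r\in\kK$; crucially, $\nu$ enters this criterion only through which sets have positive measure. Hence (Corollary \ref{mutac}) strong $X$-positive definiteness transfers between any two functions in $\Phi_n$ whose Schoenberg measures are equivalent. The paper then takes the reference function $f_s(r)=e^{-sr}$: for $s$ large, depending on $d_*(X)$, Theorem \ref{bochbound}(ii) (your own diagonal-dominance observation, applied after dilation, i.e.\ condition \eqref{boundforinverse}) shows that $f_s$ is strongly $X$-positive definite, while the Fourier--Bessel computation \eqref{schoenberg4} shows that $\nu(f_s)$ has a strictly positive continuous density on $\R_+$, so it is equivalent to Lebesgue measure and hence to $\nu(f)$. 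Transferring the set $\kK$ produced by $f_s$ back to $f$ finishes the proof: no cancellation estimate for the oscillating kernel $\Omega_n$ is ever needed, because the oscillation is absorbed once and for all into the already established positivity of the quadratic form for $e^{-sr}$.
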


Actually, the most complete result on the strong $X$-positivity and the boundedness
of $S_X(f)$ is obtained for the class $\Phi_\infty :=\bigcap\limits_{n\in\mathbb{N}}\Phi_n$
and its subclasses $\Phi_\infty(\alpha)$, $\alpha \in (0,2]$ defined in the next section.
It looks as follows.

   \begin{theorem}\label{intrinftystrong}
Let $f\in \Phi_\infty(\alpha)$, $0<\alpha\le2$ and $X\in\kX_n$. Then

\begin{itemize}
  \item [\em (i)]
  $f$  is strongly $X$-positive definite. In particular, if $\kS_X(f)$  generates an
  operator $S_X(f)$ on $\ell^2$, then it is positive definite and so invertible.

  \item [\em (ii)]
If the Schoenberg measure $\sigma=\sigma_f$ in $\eqref{alphacm}$ satisfies
\begin{equation}\label{intralphabound1}
\int_0^\infty s^{-\frac{d}{\alpha}}\,\sigma(ds)<\infty, \qquad
d=\dim\kL(X),
\end{equation}
then the Schoenberg matrix $\kS_X(f)$  generates a bounded
$($necessarily invertible$)$ operator.

  \item [\em (iii)]
Conversely, let $S_Y(f)$ be bounded for at least one
$\delta$-regular set $Y$. Then \eqref{intralphabound1} holds.
\end{itemize}
  \end{theorem}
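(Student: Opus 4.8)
The plan is to derive all three assertions from the defining representation $f(r)=\int_0^\infty e^{-sr^\alpha}\,\sigma(ds)$ of a function $f\in\Phi_\infty(\alpha)$ (cf. \eqref{alphacm}), combined with the boundedness criterion of Theorem~\ref{intrbochbound} and, for the lower bound, with harmonic analysis on $\kL(X)\cong\R^d$. Since $f\in\Phi_\infty\subset\Phi_d$ and both the form in \eqref{stronglyxpositive} and the operator $S_X(f)$ depend on $X$ only through the differences $x_k-x_j\in\kL(X)$, I would first reduce to the case $X\subset\R^d$ with $\dim\kL(X)=d$, so that $f$ may be regarded as a radial positive definite function on $\R^d$.

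For part (i) I would pass to the Fourier side. Each summand $e^{-s|x|^\alpha}$ is the characteristic function of a (rescaled) symmetric $\alpha$-stable law on $\R^d$, whose density $p_{\alpha,s}$ is continuous and \emph{strictly positive} on all of $\R^d$; integrating against $\sigma$ gives the Bochner representation $f(|x|)=\int_{\R^d}e^{i(u,x)}w(u)\,du$ with $w(u)=\int_0^\infty p_{\alpha,s}(u)\,\sigma(ds)>0$. Hence, for any finite $\{x_j\}\subset X$ and $\xi\in\C^m$,
\[
\sum_{k,j=1}^m f(|x_k-x_j|)\xi_j\overline{\xi_k}=\int_{\R^d}\Big|\sum_{k=1}^m\xi_k e^{i(u,x_k)}\Big|^2 w(u)\,du\ \ge\ c_0\int_{B(0,R)}\Big|\sum_{k=1}^m\xi_k e^{i(u,x_k)}\Big|^2 du,
\]
where $c_0=\min_{\overline{B(0,R)}}w>0$. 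The bound \eqref{stronglyxpositive} then follows from an Ingham/large–sieve type inequality: for a separated set (separation $d_*$) the last integral dominates $c_1\sum_k|\xi_k|^2$ once $R=R(d_*)$ is chosen large enough, with $c_1=c_1(R,d_*)>0$ independent of $m$ and $\xi$; this gives strong $X$-positive definiteness with $c=c_0c_1$. For $d\ge2$ the same conclusion follows directly from Theorem~\ref{Intro_stronglydef}, since positivity of $w$ is equivalent to the representing measure $\nu$ of \eqref{schoenberg1} being equivalent to Lebesgue measure. The ``in particular'' clause is then immediate from the equivalence \eqref{1.10Intro} of strong $X$-positivity with positive definiteness of $S_X(f)$, a bounded positive definite operator being invertible.

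For parts (ii) and (iii) the single identity linking the two formulations is, by Tonelli's theorem and the substitution $u=st^\alpha$,
\[
\int_0^\infty t^{d-1}f(t)\,dt=\int_0^\infty\Big(\int_0^\infty t^{d-1}e^{-st^\alpha}\,dt\Big)\sigma(ds)=\frac{\Gamma(d/\alpha)}{\alpha}\int_0^\infty s^{-d/\alpha}\,\sigma(ds),
\]
so that $t^{d-1}f\in L^1(\R_+)$ if and only if \eqref{intralphabound1} holds. Since each $e^{-st^\alpha}$ is nonnegative and nonincreasing with value $1$ at $t=0$, we get $f\in\kM_+$ (see \eqref{function}). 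Thus, under \eqref{intralphabound1}, Theorem~\ref{intrbochbound}(i) yields boundedness of $\kS_X(f)$, and part (i) upgrades it to invertibility, proving (ii). For (iii), boundedness of $S_Y(f)$ on a $\delta$-regular set $Y$ forces $t^{d-1}f\in L^1(\R_+)$ by Theorem~\ref{intrbochbound}(iii), and the displayed identity turns this back into \eqref{intralphabound1}.

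The routine parts are the Gamma-integral identity and the verification $f\in\kM_+$. The genuine obstacle is the lower bound in (i): establishing the strict positivity (and a local lower bound) of the stable spectral density $w$, and, above all, proving the Ingham/large–sieve inequality on $\R^d$ with a constant depending only on the separation $d_*$ and not on the cardinality $m$. This uniformity over all finite subsets is exactly what separates \emph{strong} from \emph{strict} $X$-positive definiteness, and is where the hypothesis $X\in\kX_n$ is indispensable.
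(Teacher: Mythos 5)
Your parts (ii) and (iii) are exactly the paper's argument: the Fubini--Gamma identity you display is the content of Corollary~\ref{alphaclass}, and Theorem~\ref{bochbound}\,(i),(iii) does the rest, with invertibility in (ii) supplied by part (i). The divergence is in part (i). The paper proves (i) with no stable densities and no Ingham-type inequality: by the nesting \eqref{inclstable} one has $f\in\Phi_\infty(\alpha)\subset\Phi_\infty$, the Gaussian pair in \eqref{radialft} converts that representation into \eqref{shoenbinfty}, whose spectral density $\phi_{n,\sigma}$ is a mixture of Gaussian densities and hence manifestly positive and continuous, and then Theorem~\ref{propositionstronglydef} applies. The engine inside Theorem~\ref{propositionstronglydef} is a transfer principle, not an exponential-sum estimate: $e^{-sr}$ is strongly $X$-positive definite for large $s$ by the Schur-test invertibility criterion \eqref{boundforinverse}, its measure \eqref{schoenberg4} is equivalent to Lebesgue measure, and Proposition~\ref{rieszfisher} with Corollary~\ref{mutac} transports strong $X$-positivity between any two symbols with equivalent representing measures. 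Your fallback route for $d\ge2$ is therefore the paper's proof, except that you obtain positivity of the weight $w$ from strict positivity of symmetric $\alpha$-stable densities; that fact is true, but its standard proof is Gaussian subordination, i.e.\ precisely the computation the paper performs, so when unpacked route B collapses into the paper's argument. Your primary route --- a local lower bound on $w$ plus an Ingham-type inequality over a ball --- is genuinely different; what it buys is a self-contained harmonic-analysis proof that covers $d=1$ directly (Theorem~\ref{propositionstronglydef} is stated only for $n\ge2$, so the paper's proof tacitly requires regarding $X\subset\R\subset\R^2$ in that case).

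The one real gap, which you flag yourself, is that the Ingham-type inequality in $\R^d$ is cited rather than proved. It is a known classical result (Ingham for $d=1$, Kahane's extension for $d\ge2$), so the approach does not fail; but it is worth noting that it follows in a few lines from the paper's own Lemma~\ref{layer}, by the same diagonal-dominance device the paper uses for $S_X(e^{-sr})$. Take a real $\psi\in C_c^\infty(\R^d)$ with $0\le\psi\le1$, $\supp\psi\subset B(0,1)$, $\int\psi>0$, and set $\psi_R(u):=\psi(u/R)$, $\check\psi_R(y):=\int_{\R^d}\psi_R(u)e^{i(u,y)}\,du=R^d\check\psi(Ry)$. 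Then
\[
\int_{B(0,R)}\Big|\sum_k\xi_ke^{i(u,x_k)}\Big|^2du\ \ge\ \int_{\R^d}\psi_R(u)\Big|\sum_k\xi_ke^{i(u,x_k)}\Big|^2du\ =\ \sum_{j,k}\xi_k\overline{\xi_j}\,\check\psi_R(x_k-x_j),
\]
and since $|\check\psi(y)|\le C_N(1+|y|)^{-N}$ for every $N$, Lemma~\ref{layer} bounds each off-diagonal row sum of this Hermitian form by $C(d,N)\,R^d\,(Rd_*(X))^{-N}$, while every diagonal entry equals $R^d\int\psi$; diagonal dominance (Lemma~\ref{schurtest} applied to the off-diagonal part) then yields the lower bound $c_1\sum_k|\xi_k|^2$ with $c_1\sim R^d$, as soon as $Rd_*(X)$ exceeds a constant depending only on $d$. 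So the two proofs rest, at bottom, on the same estimate: you apply diagonal dominance to a compactly supported spectral weight, whereas the paper applies it to the symbol $e^{-sr}$ and then transfers the conclusion along equivalence of measures.
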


\medskip

The concept of ``grammization'' plays a key role in the rest of the
Section \ref{classes}.

It is a common knowledge that every positive matrix is the Gramm matrix of a certain system of vectors
\begin{equation}
\kA=\|a_{ij}\|_{i,j\in\N}\ge0 \Leftrightarrow \kA=\|(\varphi_i, \varphi_j)\|_{i,j\in\N}=:Gr(\{\varphi_k\}_{k\in\N}, \kH)
\end{equation}
$\{\varphi_k\}_{k\in\N}$ are vectors in a Hilbert space $\kH$.
According to the classical result of Bari, the property of a Gramm matrix $Gr\{\varphi_k\}_{k\in\N}$ to generate a
bounded and invertible operator on $\ell^2$ amounts  to the sequence $\{\varphi_k\}_{k\in\N}$ to be a Riesz sequence
(Riesz basis in its linear span).

The main applications of  Theorems \ref{intrbochbound} and \ref{Intro_stronglydef} are based on the grammization
procedure and concern Riesz--Fischer and Riesz sequences of shifts $\kF_X(f)=\{f(\cdot-x_j)\}_{j\in\N}$,
$X=\{x_j\}_{j\in\N}\subset\R^n$, of certain radial functions $f\in L^2(\R^n)$.

   \begin{proposition}\label{Intro prop4.21}
Let $f\in L^2(\R^n)$ be a real-valued and radial function such that
its Fourier transform $\widehat f\not=0$ a.e., and
$X=\{x_j\}_{j\in\N}\subset \R^n$. Then the following statements are
equivalent.
\begin{itemize}
  \item [\em (i)]   $\kF_X(f)$  forms a Riesz--Fischer  sequence in $L^2(\R^n);$
  \item [\em (ii)] $\kF_X(f)$ is uniformly minimal in $L^2(\R^n);$
  \item [\em (iii)]  $X$ is a separated set, i.e., $d_*(X)>0$.
\end{itemize}
     \end{proposition}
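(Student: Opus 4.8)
The plan is to recast all three conditions as properties of the Gram matrix $Gr(\kF_X(f))$ and then close the cycle (iii)$\Rightarrow$(i)$\Rightarrow$(ii)$\Rightarrow$(iii).

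First I would compute the Gram matrix. Since $f$ is real and radial, $|\widehat f|^2$ is a nonnegative radial function in $L^1(\R^n)$, and Plancherel together with $\widehat{f(\cdot-a)}(\xi)=e^{-i(a,\xi)}\widehat f(\xi)$ gives, up to a fixed normalizing constant,
\[
(f(\cdot-x_i),f(\cdot-x_j))_{L^2(\R^n)}=\int_{\R^n}|\widehat f(\xi)|^2 e^{-i(x_i-x_j,\xi)}\,d\xi=:F(|x_i-x_j|).
\]
Thus $Gr(\kF_X(f))=\kS_X(F)$, where $F$ is a continuous radial positive definite function with $F(0)=\|f\|_{L^2}^2>0$. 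Writing $|\widehat f(\xi)|^2=\phi(|\xi|)$ and passing to polar coordinates via \eqref{fouriersphere}, the representation \eqref{schoenberg1} of $F/F(0)$ has representing measure $d\nu(t)=\const\cdot\phi(t)\,t^{n-1}\,dt$; because $\widehat f\neq0$ a.e. we have $\phi>0$ a.e., so $\nu$ is equivalent to Lebesgue measure on $\R_+$ and $F\neq\const$.

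For (iii)$\Rightarrow$(i) I would use that $\kF_X(f)$ is a Riesz--Fischer sequence precisely when its Gram matrix is bounded below, i.e.
\[
\sum_{k,j}F(|x_k-x_j|)\xi_j\overline{\xi_k}=\Bigl\|\sum_j\xi_j f(\cdot-x_j)\Bigr\|_{L^2}^2\ge c\sum_j|\xi_j|^2,
\]
which is exactly strong $X$-positive definiteness of $F$ in the sense of \eqref{stronglyxpositive}. For $n\ge2$ this is delivered by Theorem \ref{Intro_stronglydef}, whose hypothesis on $\nu$ was verified above; the case $n=1$ has to be handled by a separate, direct estimate.

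The remaining implications are soft. For (i)$\Rightarrow$(ii), the lower bound applied to the coefficient vector with $j$-th entry $1$ and entries $-c_k$ ($k\neq j$) gives $\|f(\cdot-x_j)-\sum_{k\neq j}c_k f(\cdot-x_k)\|^2\ge c\bigl(1+\sum_{k\neq j}|c_k|^2\bigr)\ge c$, hence $\dist(f(\cdot-x_j),\Span\{f(\cdot-x_k):k\neq j\})^2\ge c$; since every $\|f(\cdot-x_j)\|=\|f\|_{L^2}$, this is uniform minimality. For (ii)$\Rightarrow$(iii) I argue by contraposition: if $d_*(X)=0$ there are pairs $x_i,x_j$ with $|x_i-x_j|\to0$, and $L^2$-continuity of translation forces $\|f(\cdot-x_i)-f(\cdot-x_j)\|_{L^2}\to0$, so the normalized distances from elements of $\kF_X(f)$ to the span of the others tend to $0$ and uniform minimality fails. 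The main obstacle is the single implication (iii)$\Rightarrow$(i): establishing the lower Gram bound for an arbitrary separated set, with no decay assumed on $F$, is exactly the hard content that Theorem \ref{Intro_stronglydef} supplies, and it is where the full-support hypothesis $\widehat f\neq0$ a.e. is essential.
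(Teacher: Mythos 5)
Your proposal is correct and follows essentially the same route as the paper: you identify the Gram matrix of $\kF_X(f)$ with a Schoenberg matrix via Plancherel (the paper's Lemma \ref{lem4.11}), reduce (iii)$\Rightarrow$(i) to Theorem \ref{Intro_stronglydef} by checking that the Schoenberg representing measure is equivalent to Lebesgue measure (exactly where $\widehat f\not=0$ a.e.\ enters), and close the cycle with the same soft arguments for (i)$\Rightarrow$(ii) and (ii)$\Rightarrow$(iii) (your use of $L^2$-continuity of translation is a trivial variant of the paper's use of continuity of $\widehat F$). Your explicit remark that Theorem \ref{Intro_stronglydef} only covers $n\ge2$ is, if anything, more careful than the paper, whose own proof of (iii)$\Rightarrow$(i) rests on that same $n\ge2$ theorem without comment.
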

\begin{theorem}\label{theorem3.25}
Let $f\in L^2(\R^n)$ be a real-valued and radial function such that
its Fourier transform $\widehat f\not=0$ a.e. and
$X=\{x_j\}_{j\in\N}\subset \R^n$. Let $F$ and $F_0$ be defined as
  \begin{equation}\label{introradial1}
F(t)=(2\pi)^{n/2}|\widehat f(t)|^2 = F_0(|t|), \qquad \widehat F(t)=\widetilde F_0(|t|),
  \end{equation}
and assume that for some majorant $h\in\kM_+$ $\eqref{function}$ the
relations
\begin{equation}\label{majorant}
|\widetilde F_0(s)|\le h(s), \qquad s^{n-1} h(s)\in L^1(\R_+)
\end{equation}
hold. Then the following statements are equivalent.
\begin{itemize}
  \item [\em (i)]   $\kF_X(f)$  forms a Riesz sequence in $L^2(\R^n);$
  \item [\em (ii)]  $\kF_X(f)$ forms a basis in its linear span$;$
  \item [\em (iii)] $\kF_X(f)$ is uniformly minimal in $L^2(\R^n);$
  \item [\em (iv)]  $X$ is a separated set, i.e., $d_*(X)>0$.
\end{itemize}
\end{theorem}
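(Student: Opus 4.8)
The engine is the \emph{grammization} of $\kF_X(f)$ recalled before the statement. Writing $\varphi_j:=f(\cdot-x_j)$ and using that $f$ is real and radial, Plancherel's formula together with $\widehat{\varphi_j}(t)=e^{-i(x_j,t)}\widehat f(t)$ gives
\[
(\varphi_i,\varphi_j)_{L^2(\R^n)}=\int_{\R^n}e^{-i(x_i-x_j,t)}\,|\widehat f(t)|^2\,dt=\widetilde F_0(|x_i-x_j|),
\]
where the normalisation $F=(2\pi)^{n/2}|\widehat f|^2$ in $\eqref{introradial1}$ is chosen precisely so that the Gram entries coincide with the values $\widetilde F_0(|x_i-x_j|)$ of the radial profile of $\widehat F$ (recall $\widehat F(t)=\widetilde F_0(|t|)$). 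Hence the Gram matrix of $\kF_X(f)$ equals the single Schoenberg matrix $\kS_X(\widetilde F_0)$, and by the Bari criterion quoted above, statement (i) is equivalent to $\kS_X(\widetilde F_0)$ generating a bounded and boundedly invertible operator on $\ell^2$. The whole theorem thus collapses to the analysis of this one operator.

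The heart of the proof is the implication (iv)$\Rightarrow$(i), which I would obtain by verifying the two Riesz bounds separately. For the \emph{upper} bound (boundedness) I would use the majorant: since $|\widetilde F_0(s)|\le h(s)$ with $h\in\kM_+$ and $s^{n-1}h(s)\in L^1(\R_+)$ --- whence also $s^{d-1}h(s)\in L^1(\R_+)$ for $d=\dim\kL(X)\le n$, as $h(0)=1$ controls the origin and $s^{d-1}\le s^{n-1}$ near infinity --- the Schur-test estimate underlying Theorem \ref{intrbochbound}(i), applied to the dominating symbol $h$, bounds $\|\kS_X(\widetilde F_0)\|$. For the \emph{lower} bound I would invoke the strong $X$-positive definiteness of $\widetilde F_0$. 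This is where the hypothesis $\widehat f\ne0$ a.e. enters: it forces $F_0>0$ a.e., so the nonnegative radial $F\in L^1(\R^n)$ has Fourier transform $\widetilde F_0\in\Phi_n$ whose Schoenberg measure, by Theorem \ref{schoenbergtheorem} and the radial Fourier formula, has density proportional to $F_0(t)\,t^{n-1}$ and is therefore equivalent to Lebesgue measure on $\R_+$. For $n\ge2$, Theorem \ref{Intro_stronglydef} now yields $\kS_X(\widetilde F_0)\ge cI$ with some $c=c(X)>0$. Boundedness together with this lower estimate is, again by Bari, exactly the Riesz-sequence property.

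The remaining links are soft and close the cycle (iv)$\Rightarrow$(i)$\Rightarrow$(ii)$\Rightarrow$(iii)$\Rightarrow$(iv). For (i)$\Rightarrow$(ii), a Riesz basis in its closed span is in particular a Schauder basis there. For (ii)$\Rightarrow$(iii), the partial-sum projections $S_N$ of a Schauder basis are uniformly bounded; since $S_N-S_{N-1}$ is the rank-one map $g\mapsto(g,\psi_N)\varphi_N$, one gets $\sup_N\|\varphi_N\|\,\|\psi_N\|=\sup_N\|S_N-S_{N-1}\|<\infty$, which is precisely the uniform minimality of $\kF_X(f)$. Finally (iii)$\Rightarrow$(iv) is Proposition \ref{Intro prop4.21}.

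The principal obstacle is the lower Riesz bound. Everything there rests on recognising the Gram symbol $\widetilde F_0$ as a radial positive definite function whose Schoenberg measure is equivalent to Lebesgue measure --- a consequence of $\widehat f\ne0$ a.e. --- so that Theorem \ref{Intro_stronglydef} can be applied; producing the uniform constant $c>0$ directly, without this structural input, seems hard. A secondary technical point is the dimension bookkeeping: the upper bound runs in dimension $d=\dim\kL(X)$ through Theorem \ref{intrbochbound}(i), whereas the lower bound uses the ambient $n\ge2$ in Theorem \ref{Intro_stronglydef}; the borderline case $n=1$, where $\widetilde F_0$ is merely an even positive definite function, has to be handled by the separate one-dimensional analysis.
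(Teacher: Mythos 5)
Your proof is correct and follows essentially the same route as the paper: grammization via Lemma \ref{lem4.11} identifies the Gram matrix of $\kF_X(f)$ with $\kS_X(\widetilde F_0)$, Bari's theorem reduces (i) to boundedness plus invertibility of that operator, the majorant with the Schur test (Proposition \ref{nonmonoton} / Theorem \ref{intrbochbound}(i)) gives the upper bound, and the lower bound comes from the equivalence of the Schoenberg measure of $\widetilde F_0$ with Lebesgue measure (forced by $\widehat f\neq 0$ a.e.) and Theorem \ref{Intro_stronglydef} --- which is exactly the content of Proposition \ref{Intro prop4.21}, cited wholesale by the paper at this point. The only departures are that you spell out the implications (i)$\Rightarrow$(ii)$\Rightarrow$(iii) that the paper dismisses as obvious, and the $n=1$ issue you flag (Theorem \ref{Intro_stronglydef} requires $n\ge 2$) is present in the paper's own argument as well, so it is not a defect of your proposal relative to the paper.
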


The idea of the proof is related to the fact that the system
$\kF_X(f)$ performs the grammization of a certain Schoenberg's
matrix. So once we show that the latter generates a bounded and
invertible operator on $\ell^2$, the result is immediate  from the
Bari theorem. Thereby we make up a bridge between Riesz sequences
and Gramm matrices on the one hand and Schoenberg's matrices and
operators on the other hand.

We consider a number of examples which satisfy the assumptions of Proposition \ref{Intro prop4.21} and
Theorem~\ref{theorem3.25}. Among them
\begin{equation}\label{mainsystemIntro1}
f(x)=f_a(x)=e^{-a|x|^2}, \qquad  f(x)=f_{a,\mu}(x)=\biggl(\frac{a}{|\,x|}\,\biggr)^{\mu}\,K_\mu(a|\,x|),
\end{equation}
where $K_\mu$ is the modified Bessel function of the second kind and order $\mu$, $0\le\mu<n/4$.

Let us emphasize, that our choice of the second
system in \eqref{mainsystemIntro1} is also motivated  by
applications to elliptic operators with point interactions, since
the functions $f_{a,\mu}(\cdot-x_j)$ occur naturally in the spectral
theory of such operators for certain other values of $\mu$. We hope
to continue the study of this subject in our forthcoming papers.

It is worth stressing that in the abstract setting the uniform minimality is much
weaker than the Riesz sequence property. Nonetheless the equivalence of
these properties is well-known for certain classical systems:

(i) Exponential system $\{e^{i\lambda_k x}\}_{\lambda_k\in\Lambda}$
in $L^2[0,a)$,  $a\le\infty$, provided that $\inf_k (\Im\lambda_k) >-\infty$.

(ii) The system of rational functions
$\{(1-|\lambda_k|^2)^{1/2}(1-\lambda_k
z)^{-1}\}_{\lambda_k\in\Lambda}$ in $L^2(\T)$.

In the forthcoming paper \cite{gmo2} we shed light on this effect and show that
a transparent connection of the result in Theorem \ref{theorem3.25} with the corresponding
property of the system of exponential functions is not occasional and has deeper reasons.

From the very starting point we were  influenced by the paper
\cite{MalSch12}, wherein a tight connection between the spectral
theory of  $3D$ Schr\"{o}dinger operators with {\it infinitely many}
point interactions  and RPDF's in $\R^3$ was discovered and
exploited in both directions. In particular, a special case of
Theorem \ref{intrinftystrong} (for $n=d=3$ and $\alpha=1$) was
proved in \cite{MalSch12}  by applying machinery of the spectral
theory and the grammization of the Schoenberg--Bernstein matrix $\kS_X(e^{-as})$,
which is achieved for $n=3$  by the system
$$ f_{a,1/2}(x-x_j) = \sqrt{\frac{a}{|x-x_j|}}\,{K_{1/2}(a|x- x_j|)}
=\sqrt{\frac{\pi}{2}}\,\frac{e^{-a|x- x_j|}}{|x- x_j|}\,, \quad j\in\N, $$
(see \eqref{malshmu1}).  However the spectral methods applied in
\cite{MalSch12} \emph{cannot be extended to either $n\ge4$ or
$\alpha\not =1$}. Our reasoning is based on the harmonic and Fourier
analysis on $\R^n$ and works for an arbitrary dimension $n\ge2$.

{\bf Acknowledgement}.   We are grateful to T. Gneiting for comments
on $\alpha$-stable functions and the Mat\'ern classes and A.
Kheifets for the function theoretic argument in the proof of
Lemma~\ref{Toeplitz_Lemma}.

\section{Preliminaries}\label{prelim}

\subsection{Positive definite functions}\label{prelimpdf}

Recall some basic facts and  notions  related to positive  definite functions \cite{Akh65, BCR, TriBel, Wend05}.

\begin{definition}\label{defpoz}
A function $g:\R^n\to\C$ is called {\it positive definite} if $g$ is continuous at the origin and for
arbitrary finite sets $\{x_1,\dots,x_m\}$, $x_k\in \R^n$ and $\{\xi_1,\dots,\xi_m\}\in\C^m$ we have
\begin{equation}\label{positiv}
\sum_{k,j=1}^{m} g(x_k-x_j)\xi_j\overline{\xi}_k\ge 0.
\end{equation}
  \end{definition}
The set of positive definite function on $\R^n$ is denoted by $\Phi(\mathbb{R}^n)$. Clearly, a function $g\in\Phi(\R^n)$
if and only if it  is continuous at the origin, and the  matrix
$\kB_X(g):=\|g(x_k-x_j)\|_{k,j=1}^m$ is nonnegative, $\kB_X(g)\ge0$, for all finite subsets $X=\{x_j\}_{j=1}^m$ in $\R^n$.

A celebrated theorem of S. Bochner \cite{Boch} gives a description of the class $\Phi(\R^n)$.

 \begin{theorem}\label{boch}
A  function $g$  is  positive  definite  on  $\R^n$ if  and only if there exists a finite positive Borel
measure $\mu$ on  $\R^n$ such that
\begin{equation}\label{bochequa}
g(x)=\int\nolimits_{\R^n}e^{i(u,x)}\mu(du), \qquad x\in \R^n.
\end{equation}
    \end{theorem}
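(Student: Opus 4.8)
The plan is to prove both implications of Bochner's theorem, treating the sufficiency as a short direct computation and reserving the real work for necessity. For sufficiency, suppose $g$ has the stated representation with $\mu$ a finite positive Borel measure. Then $g$ is continuous (indeed continuous everywhere, by dominated convergence with the $\mu$-integrable dominant $1$), and for any finite $\{x_k\}$ and $\{\xi_k\}$ I would simply insert the integral and swap summation and integration (legitimate, $\mu$ finite) to get
\[
\sum_{k,j=1}^m g(x_k-x_j)\,\xi_j\overline{\xi_k}
=\int_{\R^n}\Bigl|\sum_{k=1}^m \overline{\xi_k}\,e^{i(u,x_k)}\Bigr|^2\,\mu(du)\ge 0 ,
\]
so $g\in\Phi(\R^n)$. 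This is the easy half.

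For necessity, let $g$ be continuous at the origin and positive definite. First I would record the elementary consequences of testing \eqref{positiv} on $m=1,2$: namely $g(0)\ge 0$, the Hermitian symmetry $g(-x)=\overline{g(x)}$, and the bound $|g(x)|\le g(0)$; in particular $g$ is bounded. Next I would pass from the discrete inequality to its continuous form, showing that
\[
\int_{\R^n}\!\int_{\R^n} g(x-y)\,\varphi(x)\,\overline{\varphi(y)}\,dx\,dy\ge 0
\]
for every $\varphi\in C_c(\R^n)$, by approximating the double integral with Riemann sums and invoking \eqref{positiv} on the sampling points. The strategy is then to reduce to the integrable case by Gaussian regularization: set $g_t(x)=g(x)\,e^{-t|x|^2}$ for $t>0$. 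Since the Gaussian lies in $\Phi(\R^n)$ and the Schur (entrywise) product of nonnegative-definite matrices is nonnegative-definite, each $g_t$ is again positive definite, and now $g_t\in L^1(\R^n)$ because $g$ is bounded. Applying the continuous inequality to $g_t$ and rewriting it via Parseval's identity, the form equals a positive constant times $\int_{\R^n}\widehat{g_t}(u)\,|\widehat\varphi(u)|^2\,du$; letting $|\widehat\varphi|^2$ run through an approximate identity forces $\widehat{g_t}\ge 0$ a.e. Hence $\widehat{g_t}$ is a nonnegative integrable function, and Fourier inversion yields $g_t(x)=\int_{\R^n}e^{i(u,x)}\,\mu_t(du)$ with $\mu_t$ a finite positive measure of total mass $\mu_t(\R^n)=g_t(0)=g(0)$.

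It remains to remove the regularization by letting $t\downarrow 0$. The family $\{\mu_t\}$ has uniformly bounded total mass $g(0)$, so by weak-$*$ compactness I can extract a subsequence $\mu_{t_k}$ converging weakly-$*$ to some positive measure $\mu$; since $g_{t_k}(x)\to g(x)$ pointwise, passing to the limit in the representation gives $g(x)=\int_{\R^n}e^{i(u,x)}\,\mu(du)$, as desired. \emph{The hard part is precisely this last limit}: weak-$*$ compactness alone allows mass to escape to infinity, which would make the limit measure represent $g$ with strictly smaller total mass (or fail to represent it at all). The remedy is a tightness argument in the spirit of L\'evy's continuity theorem: one estimates $\mu_t\{\,|u|>R\,\}$ by integrating $\bigl(1-\re g_t\bigr)$ against a Gaussian (or by averaging $1-e^{i(u,x)}$ over a small ball of radius $\sim 1/R$), and uses exactly the \emph{continuity of $g$ at the origin} together with $g_t(0)=g(0)$ to show this tail is small uniformly in $t$. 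Establishing this uniform tightness, and thereby guaranteeing that no mass is lost in the limit, is the single delicate step; everything else is routine once it is in place.
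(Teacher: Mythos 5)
The paper offers no proof of Theorem \ref{boch}: it is Bochner's classical theorem, quoted with the citation \cite{Boch}, so your attempt can only be measured against the standard literature proof rather than an internal argument. Your plan is in fact that standard proof (Gaussian regularization plus a L\'evy-type tightness argument), and its architecture is sound: the sufficiency computation is correct; the truncation $g_t(x)=g(x)e^{-t|x|^2}$ is positive definite by the Schur product theorem and lies in $L^1(\R^n)$ since $g$ is bounded; nonnegativity of $\widehat{g_t}$, Fourier inversion, and a vague-compactness-plus-tightness passage $t\downarrow 0$ then give \eqref{bochequa}. You also correctly single out escape of mass to infinity as the one delicate point, and the remedy you name (bounding $\mu_t\{|u|>R\}$ by the average of $g_t(0)-\re g_t$ over a ball of radius $\sim 1/R$, which is small uniformly in $t$ by continuity of $g$ at the origin and $|g|\le g(0)$) is the right one and does close the argument. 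Two steps, however, are stated more casually than they deserve. First, the consequences you extract from $m=1,2$ (namely $g(0)\ge0$, Hermitian symmetry, boundedness) do \emph{not} suffice for the Riemann-sum passage to the continuous inequality: for that you need $g$ to be continuous on all of $\R^n$, not merely at the origin, and this propagation of continuity requires the $m=3$ inequality $|g(x)-g(y)|^2\le 2g(0)\bigl(g(0)-\re g(x-y)\bigr)$; it should be recorded explicitly, and it is also what legitimizes pointwise Fourier inversion of $g_t$. Second, before inverting you must know $\widehat{g_t}\in L^1(\R^n)$, which is not automatic from $\widehat{g_t}\ge0$ alone; the usual argument integrates $\widehat{g_t}$ against Gaussians $e^{-\varepsilon|u|^2}$, identifies the limit as $\varepsilon\downarrow0$ with a constant multiple of $g_t(0)$ via the approximate-identity property and continuity of $g_t$ at $0$, and concludes by monotone convergence. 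With these two patches supplied, your proposal is a complete and correct proof along the classical lines.
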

When $g$ is a radial function, $g(\cdot)=f(|\cdot|)$, $f\in\Phi_n$, the representing measure $\nu$ in \eqref{schoenberg1}
is related to the Bochner measure $\mu$ by $\nu\{[0,r]\}=\mu\{|x|\le r\}$ (cf. \cite[Section V.4.2]{Akh65}).

\subsubsection{Class $\Phi_\infty$ of radial positive definite functions}\label{prelimpdf}

Going over to the classes $\Phi_n$ of PRDF's, note that the sequence
$\{\Phi_n\}_{n\in\N}$ is known to be nested, i.e.,
$\Phi_{n+1}\subset\Phi_n$, and inclusion is proper (see
\cite{Sch38}, \cite[Section 6.3]{TriBel}). So the intersection
$\Phi_\infty=\bigcap\limits_{n\in\mathbb{N}}\Phi_n$ comes in
naturally. The class $\Phi_\infty$ is the case of study
in the pioneering paper of I. Schoenberg \cite{Sch38}.
According to the Schoenberg theorem (see, e.g., \cite[Theorem
5.4.3]{Akh65}),  $f\in \Phi_\infty$, $f(0)=1$, if and only if it admits  an
integral representation
  \begin{equation}\label{Scoenberg_alpha-2}
f(t) = \int_0^\infty e^{-st^2}\,\sigma(ds), \qquad t\ge0,
  \end{equation}
with $\sigma$ being a probability measure on $\R_+$.
The measure $\sigma$, which is called a {\it Schoenberg measure} of
$f\in \Phi_\infty$, is then uniquely determined by $f$.

Another characterization of the class $\Phi_\infty$  is
$\Phi_\infty=\Phi(\ell^2)$, where  the latter is the class of radial
positive definite functions on the real Hilbert space $\ell^2$ (see,
e.g., \cite[p.283]{TriBel}). Indeed, since $\R^n$ is embedded in
$\ell^2$ for each $n\in\N$, we have
$\Phi(\ell^2)\subset\Phi_\infty$. Conversely, let $f\in\Phi_\infty$
and $Y=\{y_k\}_{k=1}^m\subset\ell^2$, $y_k=(y_{k1}, y_{k2},\ldots)$.
Define truncations $y_k^{(n)}:=(y_{k1},y_{k2}, \ldots, y_{kn},
0,0,\ldots)\in\R^n$. Then for each $n$
$$ \|f(|y_i^{(n)}-y_j^{(n)}|)\|_{i,j=1}^m\ge0. $$
As $\lim_{n\to\infty}|y_i^{(n)}-y_j^{(n)}|=|y_i-y_j|$
and $f$ is continuous, the matrix $\|f(|y_i-y_j|)\|_{i,j=1}^m$
is also positive definite, as claimed.

\subsubsection{Bernstein class $CM(\R_+)$  of absolute monotone functions}\label{Bernstein_prelimpdf}

\begin{definition}
A  function  $f\in C(\R_+)$ is called {\it completely monotone} if
\begin{equation}\label{derivativescm}
(-1)^kf^{(k)}(t)\ge 0, \qquad t>0, \quad k=0,1,2,\ldots.
\end{equation}
The set of  such functions is denoted by $CM(\R_+)$. A function $f$
belongs to a subclass  $CM_0(\R_+)$ of $CM(\R_+)$ if $f\in CM(\R_+)$
and  $f(+0)=1$.
  \end{definition}
A fundamental theorem of S. Bernstein -- D. Widder (\cite{beram, Wid}, see also \cite[p.204]{Akh65})
claims that $f\in CM(\R_+)$ if and only if there exists a positive Borel measure $\tau$ on $\R_+$
such that
   \begin{equation}\label{bernstein}
f(t) = \int^{\infty}_0 e^{-st}\tau(ds), \qquad   t>0.
   \end{equation}
The measure $\tau$, which is called a {\it Bernstein measure} of
$f\in CM(\R_+)$, is then uniquely determined by $f$. $\tau$ is the
probability measure if and only if $f\in CM_0(\R_+)$.

\subsubsection{Subclasses $\Phi_\infty(\alpha)$ of radial  positive definite functions}\label{Subcl_Alpha_prelimpdf}

By definition,  a class $\Phi_\infty(\alpha)$ consists of functions
 which admit an integral representation
  \begin{equation}\label{alphacm}
f(t)=\int_0^\infty e^{-st^\alpha}\,\sigma(ds), \qquad t\ge0, \quad 0<\alpha\le2,
  \end{equation}
$\sigma$ is a probability measure on $\R_+$. We call the functions
$f\in\Phi_\infty(\alpha)$ {\it $\alpha$-stable}. They are tightly
related to $\alpha$-stable distributions in probability theory.
So, $\Phi_\infty(2)=\Phi_\infty$, $\Phi_\infty(1)=CM_0(\R_+)$.
The classes $\Phi_\infty(\alpha)$ are known to admit the following
characterization \cite{bdk}: $f\in \Phi_\infty(\alpha)$,
$0<\alpha\le2$, if and only if the function $f(|x|_\alpha)$ is
positive definite, where
$$ x=(x_1,x_2,\ldots), \qquad |x|_\alpha:=\left(\sum_{n=1}^\infty |x_j|^{\,\alpha}\right)^{\frac1{\alpha}}. $$
Note that the family $\{\Phi_\infty(\alpha)\}_{0<\alpha\le2}$ is nested, i.e.,
\begin{equation}\label{inclstable}
\Phi_\infty(\alpha_1)\subset \Phi_\infty(\alpha_2), \qquad 0<\alpha_1<\alpha_2\le2,
\end{equation}
and the inclusion is proper (see, e.g., \cite{bdk, gn1998}). Indeed,
\eqref{inclstable} is equivalent to
\begin{equation}\label{inclstable1}
\Phi_\infty(\alpha)\subset \Phi_\infty(1)=CM_0(\R_+), \qquad 0<\alpha<1,
\end{equation}
(a simple change of variables under the integral sign). Next, it is known (and can be easily verified by induction,
using Leibniz chain rule) that the function $f=e^{-g}\in CM(\R_+)$ provided $g'\in CM(\R_+)$. Hence
$$ \exp(-sx^\alpha)\in CM_0(\R_+), \qquad 0<\alpha\le1, $$
so \eqref{derivativescm} holds for this function. Differentiation under
the integral sign shows that the same is true for each $f\in\Phi_\infty(\alpha)$ and \eqref{inclstable1} follows.
The same argument implies $\exp(-sx^\beta)\notin \Phi_\infty(\alpha)$ for $\beta>\alpha$.

For the detailed account of the subject see, e.g., \cite[Chapter 2.7]{WelWil}.

\subsection{Infinite matrices and Schur test}

We say that an infinite matrix $\kA=\|a_{kj}\|_{k,j\in \N}$ with complex entries $a_{kj}$ generates a bounded linear
operator $A$ on the  Hilbert space $\ell^2=\ell^2(\N)$ (or simply that an infinite matrix is a bounded operator on $\ell^2$)
if there exists a bounded linear operator $A$ such that
  \begin{equation}\label{matrixoperator}
\langle Ax,y\rangle =\sum_{k,j=1}^\infty
a_{kj}x_k\overline{y_j}, \qquad   x=\{x_k\}_{k\in \N},\quad y=\{y_k\}_{k\in \N}, \quad x,y\in \ell^2.
  \end{equation}
Clearly, if $\kA$ defines a bounded operator $A$, then $A$ is
uniquely determined by equalities (\ref{matrixoperator}).

The following result known as the {\it Schur test} (due in substance to I. Schur)
provides certain general conditions for an infinite matrix $\kA=\|a_{ij}\|_{i,j\in\N}$ to define a bounded linear operator
$A$ on $\ell^2$ (see, e.g., \cite[Theorem 5.2.1]{Nik1}). One of the simplest its versions can  be stated as follows.
   \begin{lemma}\label{schurtest}
Let $\kA=\|a_{ij}\|_{i,j\in \N}$ be an infinite Hermitian matrix which satisfies
    \begin{equation}\label{schurtestas}
C:= \sup_{j\in \N}\,\sum_{i=1}^\infty |a_{ij}| <\infty.
   \end{equation}
Then $\kA$ defines a  bounded self-adjoint operator $A$ on $\ell^2$ with $\|A\|\leq C$.
     \end{lemma}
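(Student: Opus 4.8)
The plan is to bound the sesquilinear form attached to $\kA$ and then invoke the Riesz representation theorem. For $x=\{x_k\}_{k\in\N},\,y=\{y_k\}_{k\in\N}\in\ell^2$ put $B(x,y):=\sum_{k,j}a_{kj}x_k\overline{y_j}$. The entire content of the lemma lies in showing that this double series converges absolutely and obeys $|B(x,y)|\le C\,\|x\|\,\|y\|$; once this is in hand, the construction of $A$ and its norm bound are essentially formal.

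To prove the estimate I would use the device that is the core of the Schur test: split each weight symmetrically as $|a_{kj}|=|a_{kj}|^{1/2}\cdot|a_{kj}|^{1/2}$ and apply Cauchy--Schwarz to the double sum,
\[
\sum_{k,j}|a_{kj}|\,|x_k|\,|y_j|\le\Bigl(\sum_{k,j}|a_{kj}|\,|x_k|^2\Bigr)^{1/2}\Bigl(\sum_{k,j}|a_{kj}|\,|y_j|^2\Bigr)^{1/2}.
\]
In the first factor I would carry out the summation over $j$ first, writing $\sum_{k,j}|a_{kj}|\,|x_k|^2=\sum_k|x_k|^2\bigl(\sum_j|a_{kj}|\bigr)$. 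The hypothesis \eqref{schurtestas} bounds the column sums by $C$, and since $\kA$ is Hermitian we have $|a_{kj}|=|a_{jk}|$, so the row sums $\sum_j|a_{kj}|$ are equally bounded by $C$; hence the first factor is at most $C\|x\|^2$. Summing over $k$ first in the second factor and applying the column-sum bound directly gives at most $C\|y\|^2$. Multiplying out yields $\sum_{k,j}|a_{kj}|\,|x_k|\,|y_j|\le C\|x\|\,\|y\|$, which simultaneously certifies the absolute convergence of $B$ and the desired bound $|B(x,y)|\le C\|x\|\,\|y\|$.

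From here the rest is routine. For fixed $x$ the map $y\mapsto\overline{B(x,y)}$ is a bounded linear functional of norm $\le C\|x\|$, so by Riesz there is a unique vector $Ax\in\ell^2$ with $B(x,y)=\langle Ax,y\rangle$ and $\|Ax\|\le C\|x\|$; uniqueness together with the linearity of $B$ in $x$ makes $x\mapsto Ax$ linear, whence $A$ is bounded with $\|A\|\le C$ and realizes \eqref{matrixoperator}. Finally, the Hermitian symmetry $a_{kj}=\overline{a_{jk}}$ gives $B(x,y)=\overline{B(y,x)}$, i.e. $\langle Ax,y\rangle=\langle x,Ay\rangle$, so the bounded operator $A$ is self-adjoint. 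The only genuinely delicate point is the passage from a formal matrix to an honest operator, and this is precisely what the symmetric Cauchy--Schwarz splitting---combined with the use of Hermiticity to control both the row and column sums by the single quantity $C$---secures; a truncation argument on the $N\times N$ principal blocks (uniformly bounded by the same estimate, with $A_N\to A$ strongly) would furnish an alternative, slightly longer route to the same conclusion.
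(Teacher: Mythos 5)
Your proof is correct. Note that the paper itself gives no proof of this lemma --- it simply cites \cite[Theorem 5.2.1]{Nik1} --- and your argument is precisely the standard Schur-test proof found there: the symmetric Cauchy--Schwarz splitting $|a_{kj}|=|a_{kj}|^{1/2}\cdot|a_{kj}|^{1/2}$, the use of Hermiticity ($|a_{kj}|=|a_{jk}|$) to control row sums by the same constant $C$ that the hypothesis \eqref{schurtestas} provides for column sums, and the Riesz representation step to pass from the bounded sesquilinear form to the operator $A$, with self-adjointness following from $B(x,y)=\overline{B(y,x)}$. This is also consistent with the paper's remark that for non-Hermitian matrices one needs separate row and column conditions $C_1,C_2$ with $\|A\|^2\le C_1C_2$; your proof makes explicit exactly where Hermiticity substitutes for the missing row hypothesis.
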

Note that the Schur test applies to general (not necessarily Hermitian) matrices with two independent
conditions for their rows and columns
    \begin{equation*}
C_1:= {\sup}_{j\in \N}\,\sum_{i=1}^\infty |a_{ij}| <\infty, \quad C_2:= {\sup}_{i\in \N}\,\sum_{j=1}^\infty |a_{ij}| <\infty,
   \end{equation*}
and the bound for the norm is $\|A\|^2\le C_1C_2$.

The condition for compactness of $A$ is similar.
 \begin{lemma}\label{schurtest1}
Suppose that
\begin{equation}\label{schurtestas1}
\delta_p:=\sup_{j\geq p}\,\sum_{k\geq p}|a_{jk}|<\infty, \quad \forall p\in\N, \quad {\rm and} \quad \lim_{p\to \infty} \delta_p=0.
   \end{equation}
Then the Hermitian matrix $\kA=\|a_{kj}\|_{k,j\in \N}$ generates a compact self-adjoint operator on $\ell^2$.
     \end{lemma}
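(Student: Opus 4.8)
The plan is to reduce everything to the bounded Schur test of Lemma \ref{schurtest} together with a tail-truncation argument. First I would observe that the hypothesis \eqref{schurtestas1} taken at $p=1$ reads $\delta_1=\sup_{j}\sum_{k}|a_{jk}|<\infty$, which is precisely condition \eqref{schurtestas}. Hence Lemma \ref{schurtest} already guarantees that $\kA$ generates a bounded self-adjoint operator $A$ on $\ell^2$ with $\|A\|\le\delta_1$, so self-adjointness is free and it remains only to prove compactness.

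To this end, let $Q_p$ denote the orthogonal projection of $\ell^2$ onto $\overline{\Span}\{e_k:k\ge p\}$ and let $P_p=I-Q_p$ be the finite-rank projection onto $\Span\{e_1,\dots,e_{p-1}\}$. Splitting $A=P_pAP_p+P_pAQ_p+Q_pAP_p+Q_pAQ_p$, I would set $F_p:=A-Q_pAQ_p$. Each of the three summands constituting $F_p$ carries a factor $P_p$, whose range is finite-dimensional, so $F_p$ is a finite-rank operator.

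The heart of the matter is the estimate $\|Q_pAQ_p\|\le\delta_p$. The compression $Q_pAQ_p$ is the bounded operator whose matrix entries coincide with $a_{kj}$ for $j,k\ge p$ and vanish otherwise; by the uniqueness remark following \eqref{matrixoperator}, it is exactly the operator generated on $\ell^2(\{p,p+1,\dots\})\cong\ell^2$ by the Hermitian tail matrix $\|a_{kj}\|_{j,k\ge p}$. Applying Lemma \ref{schurtest} to this tail matrix, whose column sums are $\sup_{j\ge p}\sum_{k\ge p}|a_{kj}|=\delta_p$ (the identity $|a_{kj}|=|a_{jk}|$, valid because $\kA$ is Hermitian, is what reconciles the column sums of the Schur test with the row-type sums appearing in \eqref{schurtestas1}), yields $\|Q_pAQ_p\|\le\delta_p$.

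Combining the two steps gives $\|A-F_p\|=\|Q_pAQ_p\|\le\delta_p\to0$ as $p\to\infty$, so $A$ is a norm limit of finite-rank operators and is therefore compact, which completes the proof. I do not anticipate a genuine obstacle here: the result is essentially bookkeeping. The only points demanding care are the identification of the compression $Q_pAQ_p$ with the operator generated by the tail matrix (so that Lemma \ref{schurtest} may be invoked) and keeping track of which of the four blocks are finite rank; the Hermitian symmetry is precisely what allows the single scalar $\delta_p$ from \eqref{schurtestas1} to control the Schur bound of every tail compression.
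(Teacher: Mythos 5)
Your proof is correct and matches the argument this paper relies on: the paper gives no proof of its own, deferring instead to \cite[Lemma 2.23]{MalSch12}, whose proof is exactly this truncation scheme — boundedness via the Schur test at $p=1$, the estimate $\|Q_pAQ_p\|\le\delta_p$ for the tail compressions (with Hermitian symmetry reconciling row and column sums), finite rank of the complementary blocks, and compactness as a norm limit of finite-rank operators. The two points you flag as needing care (identifying $Q_pAQ_p$ with the operator generated by the tail matrix, and tracking which blocks are finite rank) are indeed the only substantive checks, and you handle both correctly.
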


For the proof see, e.g., \cite[Lemma 2.23]{MalSch12}

\section{Schoenberg matrices from operator theory viewpoint}\label{shoenoper}

\subsection{Bounded Schoenberg operators}

Sometimes an infinite Schoenberg matrix generates a bounded linear operator $S_X(f)$
on $\ell^2$. We call $S_X(f)$ a {\it Schoenberg operator}. The
main problem we address here concerns conditions on the test set $X\subset\R^n$
and the Schoenberg symbol $f$ for $S_X(f)$ to be bounded.

We will be dealing primarily with separated sets $X$,
\begin{equation*}
d_*=d_*(X):=\inf_{i\not=j}|x_i-x_j|>0.
\end{equation*}
Recall the notation $\kX_n$ for the class of all separated sets in $|R^n$ and $\kL=\kL(X)$ for the linear span of $X$,
$d=\dim\kL\le n$.

The result below gives an upper bound for the number of points of a separated set $X$ in a spherical layer
     \begin{equation*}
U_{r}(p,q,a,X):=\{y\in\kL(X):\ pr \le |y-a|< qr\}, \quad q>p\ge0,
     \end{equation*}
centered at $a\in \kL(X)$.

\begin{lemma}\label{layer}
Let $X=\{x_k\}_{k\in\N}\in\kX_n$, $d_\ast(X)=\varepsilon>0$, and let $a\in \kL(X)$.
Then for the number $N_m(X)$ of the points $\{x_j\}$ contained in $U_{\varepsilon}(m,m+1,a,X)$, $m=0,1,\ldots$, the inequality
\begin{equation}\label{numblayer}
N_m(X)=\bigl|X\bigcap U_{\varepsilon}(m,m+1,a,X)\bigr|\le (2m+3)^d-(2m-1)^d<d\,5^d\,m^{d-1}
\end{equation}
holds.
\end{lemma}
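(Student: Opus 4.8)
The plan is to use a standard volume-packing argument in the $d$-dimensional subspace $\kL=\kL(X)$, followed by an elementary algebraic estimate. Throughout I work inside $\kL\cong\R^d$, where both the points $x_j$ and the center $a$ live, so all balls and volumes are $d$-dimensional; let $c_d$ denote the volume of the unit ball in $\R^d$.

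First I would exploit separation. Since $d_*(X)=\varepsilon$, for $i\neq j$ we have $|x_i-x_j|\ge\varepsilon$, whence the open balls $B(x_j,\varepsilon/2)$ are pairwise disjoint. Now suppose $x_j\in U_\varepsilon(m,m+1,a,X)$, i.e. $m\varepsilon\le|x_j-a|<(m+1)\varepsilon$. By the triangle inequality every $y\in B(x_j,\varepsilon/2)$ satisfies $(m-\tfrac12)\varepsilon\le|y-a|<(m+\tfrac32)\varepsilon$, so all the $N_m(X)$ disjoint balls are contained in the single annulus $A_m:=\{y\in\kL:(m-\tfrac12)\varepsilon\le|y-a|<(m+\tfrac32)\varepsilon\}$ (for $m=0$ the inner radius is replaced by $0$). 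Comparing volumes gives $N_m(X)\,c_d(\varepsilon/2)^d\le c_d\big(((m+\tfrac32)\varepsilon)^d-((m-\tfrac12)\varepsilon)^d\big)$; the factors $c_d$ and $\varepsilon^d$ cancel, and dividing out $(1/2)^d$ converts $(m+\tfrac32)$ and $(m-\tfrac12)$ into $(2m+3)$ and $(2m-1)$. This yields the first inequality $N_m(X)\le(2m+3)^d-(2m-1)^d$.

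For the second inequality I would estimate the difference of powers directly. Writing $u=2m+3$, $v=2m-1$ so that $u-v=4$, the factorization $u^d-v^d=(u-v)\sum_{k=0}^{d-1}u^{d-1-k}v^{k}$ has $d$ summands, each (for $m\ge1$, where $0\le v\le u$) at most $u^{d-1}=(2m+3)^{d-1}$; hence $u^d-v^d\le 4d\,(2m+3)^{d-1}$. For $m\ge1$ one has $2m+3\le5m$, so $(2m+3)^{d-1}\le5^{d-1}m^{d-1}$ and therefore $u^d-v^d\le4d\,5^{d-1}m^{d-1}=\tfrac45\,d\,5^d m^{d-1}<d\,5^d m^{d-1}$, as claimed.

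The only delicate point is the degenerate case $m=0$, where the ``annulus'' $A_0$ is really the ball of radius $\tfrac32\varepsilon$ and the second (strict) estimate is vacuous, its right-hand side being zero; there I would simply record the packing bound $N_0(X)\le3^d$ separately. Apart from this, the main thing to get right is the geometric containment $\bigsqcup_j B(x_j,\varepsilon/2)\subset A_m$ inside $\kL$, which is precisely where separation enters and which makes the volume comparison legitimate; the remaining steps are routine.
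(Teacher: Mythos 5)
Your proof is correct and follows essentially the same route as the paper: the identical volume-packing argument (disjoint balls $B(x_j,\varepsilon/2)$ inside the thickened layer of radii $(m-\tfrac12)\varepsilon$ and $(m+\tfrac32)\varepsilon$) yields the first inequality. You additionally supply the elementary factorization argument for the second, strict inequality and flag the $m=0$ degeneracy, both of which the paper simply leaves unstated; these additions are accurate and only strengthen the write-up.
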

\begin{proof}
Take $x_j\in X\cap \Bbb U_{\varepsilon}(m,m+1,a,X)$ and consider the balls
$B_{\varepsilon/2}(x_j)=\{x\in\kL: |x-x_j|<\varepsilon/2\}$, centered at $x_j$. They are contained in the spherical layer
$U_{\varepsilon}(m-1/2,m+3/2,a,X)$,
and pairwise disjoint. Since the volume of this layer is
$$ |U_{\varepsilon}(m-1/2,m+3/2,a,X)|=\kappa_d\Bigl[\bigl((m+3/2)\varepsilon\bigr)^d - \bigl((m - 1/2)\varepsilon\bigr)^d\Bigr],
   \qquad \kappa_d=\frac{\pi^{d/2}}{\Gamma\left(\frac{d}2+1\right)} $$
is the volume of the unit ball in $\R^d$, and $|B_{\varepsilon/2}(x_j)|=\kappa_d(\varepsilon/2)^d$, the number $N_m(X)$
satisfies \eqref{numblayer}, as claimed.
   \end{proof}

\smallskip

As far as the Schoenberg symbol $f$ in the definition of
Schoenberg's matrices goes, we assume here that it is a nonnegative,
monotone decreasing function on $\R_+$, and $f(0)=1$, i.e. $f\in
\kM_+$ (cf. \eqref{function}).  Further assumptions on the behavior
of $f$ at infinity will vary.

We proceed with a simple technical result.

\begin{lemma}\label{techlemma}
Let $h\in\kM_+$ and $d\in\N$. Then
\begin{equation}\label{decay1}
\sum_{m=1}^\infty m^{d-1}\,h(m)<\infty \ \ \Longleftrightarrow \ \ \int_0^\infty t^{d-1}\,h(t)\,dt<\infty.
\end{equation}
More precisely, for all $p\in\N$
\begin{equation}\label{twosidebound}
2^{-d+1}\int_p^\infty t^{d-1}\,h(t)\,dt\le \sum_{m=p}^\infty m^{d-1}\,h(m)\le d\int_{p-1}^\infty t^{d-1}\,h(t)\,dt.
\end{equation}
\end{lemma}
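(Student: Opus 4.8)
The plan is to treat \eqref{twosidebound} as the substantive statement and deduce \eqref{decay1} from it by setting $p=1$: since every summand and integrand is nonnegative (as $h\ge0$), the two-sided bound shows each side of \eqref{decay1} is finite exactly when the other is. The whole argument is a monotone integral-comparison test exploiting that $h\in\kM_+$ is decreasing while the weight $t^{d-1}$ is increasing. Concretely, I split $\int_{p-1}^\infty t^{d-1}h(t)\,dt$ into the unit intervals $[m-1,m]$ and $\int_p^\infty t^{d-1}h(t)\,dt$ into the intervals $[m,m+1]$, and on each piece I sandwich $h$ by its value at the appropriate endpoint.

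For the upper (right-hand) bound, on $[m-1,m]$ monotonicity gives $h(t)\ge h(m)$, hence $\int_{m-1}^{m}t^{d-1}h(t)\,dt\ge h(m)\int_{m-1}^{m}t^{d-1}\,dt=h(m)\,\frac{m^d-(m-1)^d}{d}$. The only elementary fact I need is $m^d-(m-1)^d\ge m^{d-1}$, which is equivalent to $m^{d-1}\ge(m-1)^{d-1}$ and hence clear. This yields $\int_{m-1}^{m}t^{d-1}h(t)\,dt\ge \frac1d\,m^{d-1}h(m)$. Summing over $m\ge p$ and telescoping the integrals, $\sum_{m\ge p}\int_{m-1}^{m}=\int_{p-1}^{\infty}$, gives $\int_{p-1}^{\infty}t^{d-1}h(t)\,dt\ge \frac1d\sum_{m\ge p}m^{d-1}h(m)$, which is the right inequality of \eqref{twosidebound}.

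For the lower (left-hand) bound, on $[m,m+1]$ monotonicity gives $h(t)\le h(m)$, so $\int_{m}^{m+1}t^{d-1}h(t)\,dt\le h(m)\,\frac{(m+1)^d-m^d}{d}$. Here I estimate $(m+1)^d-m^d\le d(m+1)^{d-1}$ (mean value theorem applied to $t\mapsto t^d$, or convexity), followed by $(m+1)^{d-1}=m^{d-1}(1+\tfrac1m)^{d-1}\le 2^{d-1}m^{d-1}$ for $m\ge1$, giving $\int_{m}^{m+1}t^{d-1}h(t)\,dt\le 2^{d-1}m^{d-1}h(m)$. Summing over $m\ge p$ and telescoping $\sum_{m\ge p}\int_{m}^{m+1}=\int_{p}^{\infty}$ produces $\int_{p}^{\infty}t^{d-1}h(t)\,dt\le 2^{d-1}\sum_{m\ge p}m^{d-1}h(m)$, which after dividing by $2^{d-1}$ is the left inequality of \eqref{twosidebound}.

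There is no genuine obstacle here beyond bookkeeping; the only points requiring care are the two elementary power estimates $m^d-(m-1)^d\ge m^{d-1}$ and $(m+1)^d-m^d\le d\,2^{d-1}m^{d-1}$, and checking that the constants come out exactly as the factor $d$ on the right and $2^{-d+1}$ on the left. I would also remark that the telescoping of the integrals into a single tail integral is valid whether the quantities are finite or $+\infty$ precisely because all terms are nonnegative, which is what legitimizes passing from \eqref{twosidebound} to the equivalence \eqref{decay1}.
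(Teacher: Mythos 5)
Your proof is correct and follows essentially the same route as the paper's: the same decomposition of the integral into unit intervals $[m-1,m]$ and $[m,m+1]$, the same endpoint comparison using monotonicity of $h$, and the same elementary power estimates $m^d-(m-1)^d\ge m^{d-1}$ and $(m+1)^{d-1}\le 2^{d-1}m^{d-1}$ producing the constants $d$ and $2^{-d+1}$. The only cosmetic difference is that you phrase the equivalence \eqref{decay1} explicitly as the $p=1$ case of \eqref{twosidebound}, which the paper leaves implicit.
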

\begin{proof}
An elementary inequality
$$ \frac{m^{d-1}}{d}\le \frac{m^d-(m-1)^d}{d}\le m^{d-1}, \qquad m\in\N, $$
gives for $h\in\kM_+$
$$ \int_{m-1}^m t^{d-1}\,h(t)\,dt \ge h(m)\int_{m-1}^m t^{d-1}\,dt=h(m)\,\frac{m^d-(m-1)^d}{d}\ge \frac{m^{d-1}h(m)}{d}, $$
so summation over $m$ leads to the right inequality in \eqref{twosidebound}. Similarly,
$$ \int_{m}^{m+1} t^{d-1}\,h(t)\,dt\le h(m)\,\int_{m}^{m+1} t^{d-1}\,dt=h(m)\,\frac{(m+1)^d-m^d}{d}\le (m+1)^{d-1}h(m), $$
and hence
$$ \sum_{m=p}^\infty (m+1)^{d-1}\,h(m)\ge \int_p^\infty t^{d-1}\,h(t)\,dt. $$
It remains only to note that $m+1\le 2m$ for $m\in\N$.
\end{proof}

For a one dimensional $X$, i.e., $d(X)=1$, condition \eqref{decay1} is just $f\in L^1(\R_+)$.

Recall that we write $X\in\kX_d$, $d\le n$, if $X\in\kX_n$ and $\dim\kL(X)=d$.

The following notion will be crucial in the second part of Theorem \ref{bochbound} below.

\begin{definition}\label{regularset}
A set $Y=\{y_j\}_{j\in\N}\in\kX_d$
is called {\it $\delta$-regular} if there are constants $c_0=c_0(d,\delta,Y)>~0$ and
$r_0=r_0(d,Y)\ge0$, independent from $j$ such that
\begin{equation}\label{regseparat}
|Y^{(j)}_r(\delta)|\ge c_0(d,\delta,Y)\,r^{d-1}, \qquad Y_r^{(j)}(\delta):=\{y_k\in Y: \ r\le |y_k-y_j|<r+\delta\},
\end{equation}
for $r\ge r_0$ and $j\in\N$.
\end{definition}
For instance, the lattice $\Z^n$ and its part $\Z_+^n$ are $\delta$-regular for all $\delta>0$.
On the other hand, if $X=\{x_k\}_{k\in\N}\in\R^n$, $\kL(X)=\R^n$ but $X^{(p)}:=\{x_k\}_{k\ge p}\subset\R^{n-1}$
then $X$ is certainly irregular.

Note that for any regular set $Y$ the number $N_r^{(j)}$ of points in the set $Y\cap \{y:|y-y_j|\le r\}$ is subject to the bounds
\begin{equation}\label{pointsinball}
c_1r^d\le N_r^{(j)}\le c_2 r^d
\end{equation}
for all large enough $r$. Here and in the proof of Theorem \ref{bochbound} $c_k$ stand for different positive
constants which depend on $d, \delta$, and $Y$.

\begin{theorem}[=Theorem \ref{intrbochbound}]\label{bochbound}
Let $f\in\kM_+$, $X\in\kX_n$ and let $d = \dim\kL(X)$.
\begin{itemize}
  \item [\em (i)]
 If $t^{d-1}f(\cdot)\in L^1(\R_+)$, then the Schoenberg matrix  $\kS_X(f)$ is bounded
on $\ell^2$ and
\begin{equation}
\|S_X(f)\|\le 1+d^2\biggl(\frac{5}{d_*(X)}\biggr)^d\,\int_0^\infty
t^{d-1}\,f(t)\,dt.
\end{equation}
\item [\em (ii)]
Moreover, $S_X(f)$ has a bounded inverse  whenever, in addition,
\begin{equation}\label{boundforinverse}
d_*(X)>5d^{2/d}\,\|t^{d-1}f\|_{L^1(\R_+)}^{1/d}.
\end{equation}
  \item [\em (iii)]
Conversely, let $S_Y(f)$ be bounded for at least one
$\delta$-regular set $Y$. Then $t^{d-1}f(\cdot)\in L^1(\R_+)$.
\end{itemize}
\end{theorem}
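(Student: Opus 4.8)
The plan is to derive (i) and (ii) from the Schur test (Lemma~\ref{schurtest}) together with the geometric counting estimate of Lemma~\ref{layer}, and to establish the converse (iii) by a quadratic-form argument against indicator vectors of balls.

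For (i), since $f\ge0$ and $f(0)=1$, the diagonal entries of $\kS_X(f)$ equal $1$, so I would estimate the absolute column sum $C=\sup_j\sum_i f(|x_i-x_j|)$. Fixing $j$ and writing $\varepsilon=d_*(X)$, I would group the points $x_i\ne x_j$ according to the concentric layers $U_\varepsilon(m,m+1,x_j,X)$; separation forces $x_j$ to be the only point at distance $<\varepsilon$, so the $m$-th layer contributes at most $N_m(X)\,f(m\varepsilon)\le d\,5^d m^{d-1}f(m\varepsilon)$ by \eqref{numblayer} and the monotonicity of $f$. Summing over $m$ and applying Lemma~\ref{techlemma} to $h(s):=f(\varepsilon s)\in\kM_+$ converts the series into $d\int_0^\infty t^{d-1}f(\varepsilon t)\,dt$, and the substitution $u=\varepsilon t$ produces exactly the factor $\varepsilon^{-d}\|t^{d-1}f\|_{L^1}$. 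This yields $C\le 1+d^2(5/\varepsilon)^d\|t^{d-1}f\|_{L^1}$, which is the asserted norm bound. For (ii) I would split $\kS_X(f)=I+T$, where $T$ carries the off-diagonal part; the computation above shows $\|T\|\le d^2(5/d_*(X))^d\|t^{d-1}f\|_{L^1}$. Condition \eqref{boundforinverse} is precisely $\|T\|<1$ after taking $d$-th roots, so $I+T$ is invertible by a Neumann series.

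The main obstacle is the converse (iii), since mere boundedness of a nonnegative matrix does not force its columns into $\ell^1$; indeed, testing a single column against itself yields only the too-weak pointwise decay $f(r)=O(r^{-d})$, whence $t^{d-1}f=O(1/t)$, which is borderline non-integrable. To capture the integral itself I would use a doubling argument. For fixed $j$ and radius $R$, let $\xi$ be the indicator of $Y\cap B_R(y_j)$, so that $\|\xi\|^2=N_R^{(j)}\le c_2R^d$ by \eqref{pointsinball} and boundedness gives $\langle S_Y(f)\xi,\xi\rangle\le\|S_Y(f)\|\,c_2R^d$. Since all entries are nonnegative, I would bound this double sum from below by restricting the outer index to $Y\cap B_{R/2}(y_j)$; for each such point $y_k$ one has $B_{R/2}(y_k)\subseteq B_R(y_j)$, so the corresponding inner sum dominates $\sum_{y_l\in B_{R/2}(y_k)}f(|y_k-y_l|)$, which by the $\delta$-regularity \eqref{regseparat} is at least $\Sigma(R/2):=\sum_{r_0\le m\delta,\,(m+1)\delta\le R/2}c_0(m\delta)^{d-1}f((m+1)\delta)$, uniformly in $k$.

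Combining the lower bound $N_{R/2}^{(j)}\ge c_1(R/2)^d$ with the two displays, I would arrive at $c_1(R/2)^d\,\Sigma(R/2)\le\|S_Y(f)\|\,c_2R^d$, so that $\Sigma(R/2)$ stays bounded as $R\to\infty$. Hence the series $\sum_m m^{d-1}f((m+1)\delta)$ converges, and a final appeal to Lemma~\ref{techlemma} (with $h(s)=f(\delta s)\in\kM_+$) translates this into $t^{d-1}f\in L^1(\R_+)$, completing the converse. The delicate point throughout is the uniformity in the center index $j$ (equivalently $k$) of both the upper ball count in \eqref{pointsinball} and the regularity lower bound \eqref{regseparat}, which is exactly what the definition of a $\delta$-regular set guarantees and what makes the doubling estimate independent of $R$.
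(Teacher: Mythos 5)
Your proposal is correct and follows essentially the same route as the paper: parts (i) and (ii) are the identical Schur-test/layer-counting/Neumann-series argument via Lemmas~\ref{layer} and~\ref{techlemma}, and your part (iii) is the paper's own doubling argument — testing the quadratic form on the indicator of a ball, restricting the outer sum to the half-radius ball, and invoking $\delta$-regularity together with \eqref{pointsinball} — differing only cosmetically in that you center the ball at an arbitrary $y_j$ and leave the test vector unnormalized, while the paper centers at the origin and normalizes.
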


\begin{proof}
(i). We apply the Schur test to $\kS_X(f)=\|f(|x_k-x_j|)\|_{k,j\in\N}$. For a fixed $j\in\N$ and
$\varepsilon=d_*(X)>0$ denote
     \begin{equation}\label{setinlayer}
X^{(j)}_m := \{x_k\in X:  m\varepsilon \le|x_k-x_j|<
(m+1)\varepsilon\}, \quad m\in\N, \quad X^{(j)}_0=\{x_j\}.
     \end{equation}
By Lemma \ref{layer} $|X^{(j)}_m|<d\,5^d\,m^{d-1}$. Combining this estimate with
the  monotonicity of $f$ yields
\begin{equation}\label{2.13}
\begin{split}
\sum_{k=1}^\infty f(|x_k-x_j|) &= 1+\sum_{m=1}^\infty\,\sum_{x_k\in X^{(j)}_m}\,f(|x_k-x_j|)
\le 1+\sum_{m=1}^\infty |X^{(j)}_m|\,f(m\varepsilon)  \\
&\le 1+d5^d\sum_{m=1}^\infty m^{d-1}\,f(m\varepsilon).
\end{split}
\end{equation}
The result now follows from the Schur test and Lemma \ref{techlemma} with $h(\cdot)=f(\varepsilon \cdot)$.

(ii). Going over to the second statement, one has as above
$$ \sum_{k=1}^\infty |f(|x_k-x_j|)-\delta_{kj}|=\sum_{k\not=j}f(|x_k-x_j|) \le d^2\biggl(\frac{5}{d_*(X)}\biggr)^d\,\int_0^\infty t^{d-1}\,f(t)\,dt,
$$
so $\|S_X(f)-I\|<1$ as soon as \eqref{boundforinverse} holds and $S_X(f)$ is invertible.

(iii). With no loss of generality assume that $\kL(X)=\R^d$. At this point we make use of a particular labeling of the set $X$ (generally speaking
the way of enumeration of $X$ makes no difference in our setting). Precisely, we label $X$ by increasing of the distance from the origin
$$ 0=|x_1|<|x_2|\le |x_3|\le\ldots. $$

For a ball $B_r=B^d_r$ of radius $r>0$ centered at the origin we put $E_r:=X\cap B_r$ and $N_r:=|E_r|$. Given $x_j\in X$, denote by $p(j)$ the number
of layers $X^{(j)}_m$ which are contained in $B_r$. It is clear that for any $x_j\in E_{r/2}$ one has $p(j)\ge [r/2\varepsilon]$.

From the Definition \ref{regularset} and $f\in\kM_+$ we see that
\begin{equation}\label{belowbound1}
\begin{split}
\sum_{k=1}^{N_r} f(|x_k-x_j|) &\ge \sum_{m=1}^{p(j)} \sum_{x_k\in X^{(j)}_m} f(|x_k-x_j|)\ge c_3\sum_{m=1}^{p(j)} m^{d-1} f(\varepsilon(m+1)) \\
&\ge c_4\sum_{m=2}^{p(j)+1} m^{d-1} f(\varepsilon m).
\end{split}
\end{equation}

Since $S_X(f)$ is bounded then on the test vector $h=h_{N_r}=\frac1{\sqrt{N_r}}(1,1,\ldots,1,0,0,\ldots)$, $\|h\|=1$, we have in view of
\eqref{belowbound1} and  \eqref{pointsinball} (with $j=1$, $x_1=0$)
\begin{equation*}
\begin{split}
\|S_X(f)\| &\ge |\langle S_X(f)h,h\rangle|=\frac1{N_r}\sum_{j=1}^{N_r}\sum_{k=1}^{N_r} f(|x_k-x_j|)
\ge \frac1{N_r}\sum_{|x_j|<R/2}^{N_r}\sum_{k=1}^{N_r} f(|x_k-x_j|) \\
&\ge \frac{c_5}{N_r}\,N_{r/2}\sum_{m=2}^{[r/2\varepsilon]} m^{d-1} f(\varepsilon m)
\ge c_6 \sum_{m=2}^{[r/2\varepsilon]} m^{d-1} f(\varepsilon m).
\end{split}
\end{equation*}
Since $r$ is arbitrarily large, the result follows from Lemma \ref{techlemma}.
\end{proof}

\begin{remark}
The statement (iii) of the above theorem is particularly simple for $d=1$.

A one-dimensional sequence $X(\La)=\{x_k\}$, $x_k=\la_k e$, is called a Toeplitz-like sequence if
\begin{equation}\label{toeplitz-like}
 0=\la_1<\la_2<\ldots, \qquad 0<d_*(X)\le \la_{i+1}-\la_i\le d^*(X)<\infty, \end{equation}
for all $i\in\N$.

Assume now that the Schoenberg operator $S_X(f)$ is bounded. Take the same test vector
$h_N=\frac1{\sqrt{N}}(1,1,\ldots,1,0,0,\ldots)$, $\|h_N\|=1$ and write
$$ \|S_X(f)\|\ge\langle S_X(f)h_N,h_N\rangle=\frac1{N}\sum_{i,j=1}^N f(|x_i-x_j|)=
f(0)+\frac2{N}\sum_{k=1}^{N-1}\sum_{i=1}^{N-k}f(\la_{i+k}-\la_i), $$
By \eqref{toeplitz-like}, $kd_*(X)\le \la_{i+k}-\la_i\le kd^*(X)$, and in view of monotonicity
$$ \|S\|\ge 2\sum_{k=1}^{m-1} \Bigl(1-\frac{k}{m}\Bigr)f(kd^*(X))\ge 2\sum_{k=1}^{m/2} \Bigl(1-\frac{k}{m}\Bigr)f(kd^*(X))
\ge \sum_{k=1}^{m/2} f(kd^*(X)). $$
Thereby the series $\sum_k f(kd^*(X))$ converges and Lemma \ref{techlemma} gives $f\in\L^1(\R_+)$.
\end{remark}

For $\alpha$-stable functions we have a simple condition for the boundedness of $S_X(f)$ in terms of the Schoenberg measure
$\sigma$ \eqref{alphacm}.

\begin{corollary}\label{alphaclass}
Let $f\in\Phi_\infty(\alpha)$, $0<\alpha\le2$,  $d\in \N$, and let
$\sigma = \sigma_f$ be the Schoenberg measure in $\eqref{alphacm}$. Then
  \begin{equation}\label{alphabound}
\int_0^\infty t^{d-1}f(t)\,dt<\infty \ \ \Longleftrightarrow \ \ \int_0^\infty s^{-\frac{d}{\alpha}}\,\sigma(ds)<\infty.
 \end{equation}
In particular, the Schoenberg operator $S_X(f)$ is bounded for all $X\in\kX_d$, provided that the measure $\sigma$ satisfies
\eqref{alphabound}.
\end{corollary}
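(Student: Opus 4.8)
The plan is to substitute the Schoenberg representation \eqref{alphacm} directly into the left-hand side and collapse the whole question into one explicit Gamma integral. Writing $f(t)=\int_0^\infty e^{-st^\alpha}\,\sigma(ds)$ and noting that the integrand $t^{d-1}e^{-st^\alpha}$ is nonnegative on $\R_+\times\R_+$, I would invoke Tonelli's theorem to interchange the order of integration,
\[
\int_0^\infty t^{d-1}f(t)\,dt=\int_0^\infty\Bigl(\int_0^\infty t^{d-1}e^{-st^\alpha}\,dt\Bigr)\,\sigma(ds).
\]
The point worth stressing is that this interchange needs no integrability hypothesis precisely because every quantity in sight is nonnegative; consequently the two sides are simultaneously finite or infinite, which is exactly the equivalence to be established.

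The heart of the computation is the inner integral. For fixed $s>0$ I would perform the change of variable $u=st^\alpha$, i.e.\ $t=(u/s)^{1/\alpha}$, under which a short calculation gives $t^{d-1}\,dt=\alpha^{-1}s^{-d/\alpha}u^{d/\alpha-1}\,du$. This turns the inner integral into a standard Gamma integral,
\[
\int_0^\infty t^{d-1}e^{-st^\alpha}\,dt=\frac{s^{-d/\alpha}}{\alpha}\int_0^\infty u^{d/\alpha-1}e^{-u}\,du=\frac{\Gamma(d/\alpha)}{\alpha}\,s^{-d/\alpha},
\]
the last integral converging because $d/\alpha>0$. Substituting back yields
\[
\int_0^\infty t^{d-1}f(t)\,dt=\frac{\Gamma(d/\alpha)}{\alpha}\int_0^\infty s^{-d/\alpha}\,\sigma(ds),
\]
and since the proportionality constant $\Gamma(d/\alpha)/\alpha$ is a fixed positive finite number, the equivalence \eqref{alphabound} is immediate.

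For the concluding assertion I would first record that every $f\in\Phi_\infty(\alpha)$ belongs to $\kM_+$: one has $f(0)=\int_0^\infty\sigma(ds)=1$ since $\sigma$ is a probability measure, $f\ge0$ trivially, and $f$ is nonincreasing because $t\mapsto e^{-st^\alpha}$ is nonincreasing for each $s\ge0$. Hence, if the right-hand side of \eqref{alphabound} is finite, the equivalence just proved gives $t^{d-1}f(\cdot)\in L^1(\R_+)$, and Theorem~\ref{bochbound}(i) then guarantees that $\kS_X(f)$ generates a bounded operator on $\ell^2$ for every $X\in\kX_d$. I do not expect a genuine obstacle here: the argument is essentially a routine Fubini--Tonelli interchange followed by the elementary Gamma-function identity, with the only points requiring a word being the nonnegativity that legitimizes the interchange and the positivity of $d/\alpha$ that ensures convergence of the Gamma integral.
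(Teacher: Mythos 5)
Your proposal is correct and follows essentially the same route as the paper: substitute the representation \eqref{alphacm}, interchange the order of integration (the paper does this tacitly, you justify it via Tonelli), evaluate the inner integral as $\alpha^{-1}\Gamma(d/\alpha)\,s^{-d/\alpha}$, and then invoke Theorem~\ref{bochbound} together with $f\in\kM_+$. Your write-up is merely more explicit about the nonnegativity justifying the interchange and about why $f\in\kM_+$, both of which the paper leaves to the reader.
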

\begin{proof} It is clear that $f\in\kM_+$. Next,
\begin{equation}\label{2.18C}
\begin{split}
\int_0^\infty t^{d-1}f(t)\,dt &=\int_0^\infty t^{d-1}\,dt\,\int_0^\infty e^{-st^\alpha}\,\sigma(ds)=
\int_0^\infty \sigma(ds)\,\int_0^\infty t^{d-1}e^{-st^\alpha}\,dt \\ &=\frac1{\alpha}\,\Gamma\left(\frac{d}{\alpha}\right)\,
\int_0^\infty s^{-\frac{d}{\alpha}}\,\sigma(ds)<\infty.
  \end{split}
\end{equation}
Theorem \ref{bochbound} completes the proof. \end{proof}

Note that the above argument goes through for an arbitrary $\alpha>0$.

We prove later in Theorem \ref{inftystrong} that each Schoenberg operator  $S_X(f)$  with the
symbol as in Corollary \ref{alphaclass} is actually invertible.

As another direct consequence of Theorem \ref{bochbound} we have

\begin{corollary}
Let $f,g\in\kM_+$ and $f(t)=g(t)$ for $t\ge t_0$. If $S_Y(f)$ is bounded for at least one regular set $Y\in\kX_d$, then $S_X(g)$ are
bounded for all $X\in\kX_d$.
\end{corollary}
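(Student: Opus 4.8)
The plan is to route both ends through the single integrability criterion $t^{d-1}f(\cdot)\in L^1(\R_+)$, using only the three parts of Theorem~\ref{bochbound}. The corollary asserts an implication about \emph{all} $X\in\kX_d$ starting from boundedness for \emph{one} regular $Y\in\kX_d$, and the natural bridge between these two quantifier regimes is exactly that scalar integrability condition, which does not see the particular set at all.

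First I would apply Theorem~\ref{bochbound}(iii) to the symbol $f$: since $S_Y(f)$ is bounded for at least one $\delta$-regular set $Y\in\kX_d$, that part yields $t^{d-1}f(\cdot)\in L^1(\R_+)$. Note that the exponent $d-1$ here is governed by $d=\dim\kL(Y)$, and the hypothesis $Y\in\kX_d$ fixes this to be the same $d$ that appears in the conclusion, so there is no mismatch of dimension between the two steps.

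Next I would transfer this integrability from $f$ to $g$. Both functions lie in $\kM_+$, so $0\le f,g\le 1$ (monotone decreasing with value $1$ at the origin) and they coincide on $[t_0,\infty)$. Splitting
\begin{equation*}
\int_0^\infty t^{d-1}g(t)\,dt=\int_0^{t_0} t^{d-1}g(t)\,dt+\int_{t_0}^\infty t^{d-1}f(t)\,dt,
\end{equation*}
the tail integral is finite by the previous step, while the head is bounded by $\int_0^{t_0} t^{d-1}\,dt=t_0^d/d<\infty$ because $g\le1$ and $d\ge1$. Hence $t^{d-1}g(\cdot)\in L^1(\R_+)$. Finally, Theorem~\ref{bochbound}(i) applied to the symbol $g$ gives boundedness of $\kS_X(g)$, together with the explicit norm bound, for every $X\in\kX_d$.

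There is essentially no hard step: the content is entirely carried by Theorem~\ref{bochbound}, and the replacement of $f$ by $g$ affects only the harmless local behaviour near the origin, which $\kM_+$ controls automatically. The one point that deserves a word of care is the consistency of the dimension parameter $d$ across parts (iii) and (i); this is exactly why the statement restricts both the given set $Y$ and the target sets $X$ to the common class $\kX_d$. With that alignment in place, the proof is a two-line deduction.
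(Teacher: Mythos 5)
Your proof is correct and follows exactly the route the paper intends: the corollary is stated there as a direct consequence of Theorem~\ref{bochbound}, obtained by extracting $t^{d-1}f(\cdot)\in L^1(\R_+)$ from part (iii), transferring the integrability to $g$ via the shared tail and the trivial bound $g\le 1$ near the origin, and then applying part (i). Your additional remark on the consistency of the dimension parameter $d$ across parts (iii) and (i) is a sensible clarification but introduces nothing beyond the paper's implicit argument.
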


The monotonicity condition in \eqref{function} is somewhat restrictive. It is not at all necessary for Schoenberg's operator to be bounded.

\begin{proposition}\label{nonmonoton}
Let $f$ and $h$ be real-valued functions on $\R_+$. Assume that $|f|\le h$ and the operator $S_X(h)$ is bounded. Then so is
$S_X(f)$. In particular, let $f$ be a bounded function on $\R_+$, which is monotone decreasing for $t\ge t_0(f)$ and
$t^{d-1}f(\cdot)\in L^1(\R_+)$. Then $S_X(f)$ is bounded.
   \end{proposition}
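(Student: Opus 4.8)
The plan is to split the statement into its two assertions and dispose of the general domination claim first, since the ``in particular'' part will then follow from it together with Theorem \ref{bochbound}(i). For the first assertion the key observation is that the hypothesis $|f|\le h$ forces $h\ge0$, so the matrix $\kS_X(h)=\|h(|x_i-x_j|)\|$ has \emph{nonnegative} entries, while $|f(|x_i-x_j|)|\le h(|x_i-x_j|)$ entrywise. I would then record the elementary fact that entrywise domination by a bounded nonnegative matrix preserves boundedness: if $A=\|a_{ij}\|$ has $a_{ij}\ge0$ and defines a bounded operator, and $|b_{ij}|\le a_{ij}$, then for all finitely supported $x,y$,
\[
\Bigl|\sum_{i,j} b_{ij}\,x_i\,\overline{y_j}\Bigr|\le \sum_{i,j} a_{ij}\,|x_i|\,|y_j|=\langle A|x|,|y|\rangle\le \|A\|\,\|x\|\,\|y\|,
\]
where $|x|=(|x_i|)$, $|y|=(|y_j|)$, and the last bound is Cauchy--Schwarz together with $\||x|\|=\|x\|$ and $\||y|\|=\|y\|$. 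This shows the bilinear form of $\kS_X(f)$ is bounded by $\|S_X(h)\|$ on a dense set, hence $\kS_X(f)$ generates a bounded operator with $\|S_X(f)\|\le\|S_X(h)\|$. I want to stress that the nonnegativity of the entries of the majorant matrix is essential: for general bounded matrices entrywise domination does \emph{not} preserve boundedness, and this is precisely the point at which the structural assumption $h\ge|f|\ge0$ enters.

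For the ``in particular'' part the plan is to manufacture a majorant $h\in\kM_+$ with $|f|\le h$ and $t^{d-1}h\in L^1(\R_+)$, and then invoke Theorem \ref{bochbound}(i) followed by the domination result just proved. First I would note that $f$ monotone decreasing on $[t_0,\infty)$ together with $t^{d-1}f\in L^1(\R_+)$ forces $f\ge0$ on that tail: the monotone limit $L=\lim_{t\to\infty}f(t)$ must vanish, since $L\neq0$ would give $|t^{d-1}f(t)|\ge \tfrac12|L|\,t^{d-1}$ for large $t$, contradicting integrability, and a function decreasing to $0$ is nonnegative. Writing $c:=\sup_{[0,t_0]}|f|<\infty$ (finite because $f$ is bounded), I would set $h(t)=c$ for $t\le t_0$ and $h(t)=f(t)$ for $t>t_0$. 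Since $f(t)\le f(t_0)\le c$ on $(t_0,\infty)$ and $f$ decreases there, $h$ is nonnegative and non-increasing, and $|f|\le h$ by construction; moreover $t^{d-1}h(t)=c\,t^{d-1}$ on $[0,t_0]$ and equals $t^{d-1}f(t)$ on $(t_0,\infty)$, so $t^{d-1}h\in L^1(\R_+)$. Rescaling to $g:=h/c\in\kM_+$ (the degenerate case $c=0$ gives $f\equiv0$ trivially), Theorem \ref{bochbound}(i) yields boundedness of $S_X(g)$, hence of $S_X(h)=c\,S_X(g)$, and the first part delivers boundedness of $S_X(f)$.

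The only genuine content lies in the domination lemma of the first paragraph; the remainder is the routine majorant construction. The single point that requires a little care is verifying that the hypotheses really force $f$ to be eventually nonnegative, so that the monotone tail of $f$ may be used directly as the tail of $h$ and the majorant stays inside the class $\kM_+$ after normalization.
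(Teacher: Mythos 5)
Your proposal is correct and follows essentially the same route as the paper: an entrywise-domination step combined with a monotone integrable majorant fed into Theorem \ref{bochbound}(i). The only differences are cosmetic — the paper invokes Akhiezer--Glazman for the domination step where you prove it inline via the bilinear-form estimate, and the paper's majorant is the decreasing envelope $h(t)=\sup_{s\ge t}|f(s)|$ where yours is the constant $c=\sup_{[0,t_0]}|f|$ on $[0,t_0]$ glued to $f$ on the tail (both requiring the same observation, which you justify more carefully than the paper, that $f\ge 0$ on $[t_0,\infty)$).
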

\begin{proof}
The Schoenberg matrix $\kS_X(h)$ dominates $\kS_X(f)$, i.e., $h(|x_j-x_k|)\ge |f(|x_j-x_k|)|$.
Hence if $S_X(h)$ is bounded then by \cite[Theorem 29.2]{AkhGlz78}, so is $S_X(f)$.

Concerning the second statement, it is clear that $f\ge0$ on $[t_0(f),\infty)$. Put $h(t):=\sup_{t\ge s} |f(s)|$. Then $h$ is a
nonnegative function, monotone decreasing on $\R_+$, $h(0)>0$ (we assume $f\not\equiv 0$), and $h=f$ on $[t_0(f),\infty)$, so \eqref{decay1}
holds for $h$. By Theorem \ref{bochbound}, $S_X(h)$ is bounded and as $h\ge |f|$ on $\R_+$, then by the first part of the proof,
so is $S_X(f)$, as needed.
\end{proof}
\begin{corollary}\label{cor2.11}
Let $g\in\Phi_n$,  $\alpha>0$, and $e_{\alpha}(t):=e^{-\alpha t}$.
 Then $f_{\alpha}:= e_{\alpha}g\in\Phi_n$ and for any $d\in\Bbb N$ and any
$X\in \mathcal X_d$ the Schoenberg operator $S_X(f_{\alpha})$ is bounded.
   \end{corollary}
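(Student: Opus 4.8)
The plan is to establish the two assertions separately, since they rest on different mechanisms. For the membership $f_\alpha\in\Phi_n$, I would first note that $e_\alpha\in\Phi_n$: since $(-1)^k(e^{-\alpha t})^{(k)}=\alpha^k e^{-\alpha t}\ge0$ and $e_\alpha(+0)=1$, we have $e_\alpha\in CM_0(\R_+)=\Phi_\infty(1)\subset\Phi_\infty\subset\Phi_n$, using the nesting \eqref{inclstable} together with $\Phi_\infty\subset\Phi_n$. Because $g,e_\alpha\in\Phi_n$, for every finite set $X\subset\R^n$ the Schoenberg matrices $\kS_X(g)$ and $\kS_X(e_\alpha)$ are nonnegative. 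Their Hadamard (entrywise) product has $(i,j)$ entry $g(|x_i-x_j|)e^{-\alpha|x_i-x_j|}=f_\alpha(|x_i-x_j|)$, so it coincides with $\kS_X(f_\alpha)$; by the Schur product theorem the Hadamard product of two nonnegative matrices is nonnegative, whence $\kS_X(f_\alpha)\ge0$ for all finite $X$, i.e.\ $f_\alpha\in\Phi_n$. If one prefers to avoid quoting the Schur product theorem, the same conclusion follows from Bochner's Theorem \ref{boch}: representing $e^{-\alpha|\cdot|}$ and $g(|\cdot|)$ as Fourier transforms of finite positive measures and convolving those measures exhibits $f_\alpha(|\cdot|)$ as the Fourier transform of a finite positive measure.

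For the boundedness of $S_X(f_\alpha)$ I would argue purely by domination, so that positive definiteness of $f_\alpha$ plays no role here. The elementary bound $|g(t)|\le g(0)$, valid for all $t\ge0$ by nonnegativity of the $2\times2$ Schoenberg matrix on the pair $\{0,x\}$ with $|x|=t$, gives
\begin{equation*}
|f_\alpha(t)|=e^{-\alpha t}|g(t)|\le g(0)\,e^{-\alpha t}=:h(t),\qquad t\ge0.
\end{equation*}
Here $e_\alpha\in\kM_+$ and $t^{d-1}e_\alpha\in L^1(\R_+)$, since $\int_0^\infty t^{d-1}e^{-\alpha t}\,dt=\Gamma(d)\,\alpha^{-d}<\infty$. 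Hence Theorem \ref{bochbound}(i) shows that $S_X(e_\alpha)$ is bounded for every $X\in\kX_d$, and therefore so is $S_X(h)=g(0)\,S_X(e_\alpha)$. Since $|f_\alpha|\le h$, Proposition \ref{nonmonoton} yields the boundedness of $S_X(f_\alpha)$, which completes the argument.

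The proof is essentially an assembly of tools already developed, and I anticipate no serious obstacle. The only genuine ingredient not stated verbatim in the excerpt is the closure of $\Phi_n$ under products, which is the one point I would be careful to justify; my preference is the one-line convolution argument through Bochner's Theorem \ref{boch}, which is available here, rather than leaning on an uncited Schur product theorem. A minor point also requiring care is that $h=g(0)e_\alpha$ need not lie in $\kM_+$ when $g(0)\ne1$, so Theorem \ref{bochbound} must be applied to the normalized symbol $e_\alpha$ and the boundedness of $S_X(h)$ then deduced by the trivial scaling $S_X(h)=g(0)S_X(e_\alpha)$ (the degenerate case $g(0)=0$ forces $f_\alpha\equiv0$ and is trivial).
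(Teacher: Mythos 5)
Your proof is correct and follows essentially the same route as the paper: the membership $f_\alpha\in\Phi_n$ via the Schur (Hadamard) product of the nonnegative matrices $\kS_X(e_\alpha)$ and $\kS_X(g)$, and the boundedness via the domination $|f_\alpha(t)|\le M e^{-\alpha t}$ combined with Proposition \ref{nonmonoton}. Your extra care about normalizing $h=g(0)e_\alpha$ before invoking Theorem \ref{bochbound} is a minor refinement of the same argument, not a different approach.
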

       \begin{proof}
Since $e_{\alpha}\in CM_0(\R_+)\subset\Phi_{\infty}$, then for any finite $X$ the  Schoenberg
matrix  $S_X(f_{\alpha}) = S_X(e_{\alpha})\circ S_X(g)$, being  the
Schur product of two non-negative matrices $S_X(e_{\alpha})$ and
$S_X(g)$, is also non-negative. This proves the inclusion $f_{\alpha}\in\Phi_n$.

Next, since $|f_{\alpha}(t)|\le M e^{-\alpha t}$ with
$M=\|g\|_{C(\Bbb R_+)}$, then $t^{d-1}f_\alpha(\cdot)\in L^1(\R_+)$
with an arbitrary $d\in\N$. It remains to apply Proposition  \ref{nonmonoton}.
       \end{proof}

\subsection{Fredholm property}

We discuss here the situation when $S_X(f)$ is a Fredholm operator, more precisely,
\begin{equation}\label{onepluscomp}
S_X(f)=I+T, \qquad T\in \frak S_\infty(\ell^2)
\end{equation}
is a compact operator on $\ell^2$. In this case one should
impose a much stronger condition on $X$ than just $d_*(X)>0$.
\begin{theorem}\label{bochcomp}
Let $X=\{x_k\}_{k\in\N}\subset\R^d$ satisfy
\begin{equation}\label{strongseparat}
\lim_{{i,j\to\infty}\atop{i\not=j}} |x_i-x_j|=+\infty.
\end{equation}
Let $f\in\kM_+$ with $t^{d-1}f\in L^1(\R_+)$.
Then $\eqref{onepluscomp}$ holds. In particular,  $S_X(f)$ has bounded inverse whenever $\ker
S_X(f)=\{0\}$.

Conversely, let $f$ be a strictly positive, monotone decreasing function on $\R_+$, $f(0)=1$, and $t^{d-1}f\in L^1(\R_+)$.
Then $\eqref{onepluscomp}$ implies $\eqref{strongseparat}$.
\end{theorem}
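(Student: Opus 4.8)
The plan is to establish the two directions separately, using the two Schur-type tests already available.

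For the forward implication, I would write $S_X(f)=I+T$ with $T=\|f(|x_k-x_j|)-\delta_{kj}\|_{k,j}$; the diagonal of $T$ vanishes because $f(0)=1$ and the points are distinct, and $T$ is Hermitian. Since $X$ is separated (a standing assumption, and in any case forced by \eqref{strongseparat} via a packing argument: the pairwise distances cannot accumulate at $0$), put $\varepsilon=d_*(X)>0$ and reuse the layer decomposition $X^{(j)}_m$ from \eqref{setinlayer} together with the bound $|X^{(j)}_m|<d\,5^d\,m^{d-1}$ of Lemma \ref{layer}. The goal is to verify the compactness criterion \eqref{schurtestas1} of Lemma \ref{schurtest1} for $T$. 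Finiteness of each $\delta_p$ is immediate from the boundedness estimate already proved in Theorem \ref{bochbound}, so the whole point is to show $\delta_p\to 0$. Fix $\eta>0$. Because $t^{d-1}f\in L^1(\R_+)$, Lemma \ref{techlemma} makes the series $\sum_m m^{d-1}f(m\varepsilon)$ converge, so its tail $\sum_{m>R/\varepsilon}m^{d-1}f(m\varepsilon)$ drops below $\eta/(d\,5^d)$ once $R$ is large. Now I invoke \eqref{strongseparat}: there is $N=N(R)$ with $|x_i-x_j|>R$ whenever $i\ne j$ and $i,j\ge N$. For $p\ge N$ and any $j\ge p$, every surviving index $k\ge p$, $k\ne j$, then satisfies $|x_k-x_j|>R$, so only layers $X^{(j)}_m$ with $m\varepsilon>R$ contribute to $\sum_{k\ge p,\,k\ne j}f(|x_k-x_j|)$. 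Bounding this by $d\,5^d\sum_{m>R/\varepsilon}m^{d-1}f(m\varepsilon)<\eta$, uniformly in $j\ge p$, gives $\delta_p<\eta$ for all $p\ge N$, i.e. $\delta_p\to0$. Lemma \ref{schurtest1} then yields compactness of $T$, which is \eqref{onepluscomp}. The ``in particular'' clause is then purely spectral: $S_X(f)=I+T$ is bounded self-adjoint with $T$ compact, so $0\in\sigma(S_X(f))$ exactly when $-1$ is an eigenvalue of $T$, i.e. when $\ker S_X(f)\ne\{0\}$; thus $\ker S_X(f)=\{0\}$ forces invertibility with a bounded inverse.

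For the converse I would argue by contradiction, exploiting that a compact operator sends weakly null sequences to norm-null ones. Suppose \eqref{strongseparat} fails. Then there are a constant $M_0>0$ and indices $i_n\ne j_n$ with $i_n,j_n\to\infty$ and $|x_{i_n}-x_{j_n}|\le M_0$. As $j_n\to\infty$ the standard basis vectors satisfy $e_{j_n}\rightharpoonup 0$ in $\ell^2$, so compactness of $T$ gives $\|Te_{j_n}\|\to0$. On the other hand the $i_n$-th coordinate of $Te_{j_n}$ equals $f(|x_{i_n}-x_{j_n}|)$ (an off-diagonal entry, since $i_n\ne j_n$), whence monotonicity and strict positivity of $f$ give $\|Te_{j_n}\|\ge f(|x_{i_n}-x_{j_n}|)\ge f(M_0)>0$, a contradiction. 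Hence \eqref{strongseparat} must hold.

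I expect the only genuinely delicate point to be the uniformity in $j\ge p$ of the tail bound in the forward direction: one must use the full strength of \eqref{strongseparat} — not mere separation — to guarantee that, past a common cutoff $N(R)$, the off-diagonal mass of all relevant rows is pushed into arbitrarily high layers simultaneously. The converse is essentially immediate once $T$ is tested against the single basis vectors $e_{j_n}$.
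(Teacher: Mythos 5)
Your proof is correct and follows essentially the same route as the paper: the forward direction verifies the compactness criterion of Lemma \ref{schurtest1} via the layer decomposition \eqref{setinlayer}, Lemma \ref{layer}, and Lemma \ref{techlemma}, with \eqref{strongseparat} pushing all relevant off-diagonal mass into high layers uniformly in $j\ge p$; the converse is the paper's argument as well, since $0<f(C)\le\langle Te_{j_n},e_{i_n}\rangle\le\|Te_{j_n}\|$ and $e_{j_n}\rightharpoonup 0$ contradict compactness. Your added details (the explicit tail bookkeeping, the observation that \eqref{strongseparat} forces separation, and the spectral justification of the ``in particular'' clause) are sound refinements of what the paper leaves implicit.
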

\begin{proof}
To apply Lemma \ref{schurtest1} we argue as in the proof of the
Theorem \ref{bochbound}. According to  Lemma \ref{layer}  for each
$p\in\N$ there is $q=q(p)\in\N$ so that for $j\ge p$
\begin{equation*}
\begin{split}
\sum_{k=p}^\infty |f(|x_k-x_j|)-\delta_{kj}| &= \sum_{k\ge p,\, k\not=j}f(|x_k-x_j|)= \sum_{m=q}^\infty\,\sum_{x_k\in X^{(j)}_m}\,f(|x_k-x_j|)
\le d5^d\sum_{m=q}^\infty m^{d-1}\,f(d_*(X)m) \\ &\le d^2\biggl(\frac{5}{d_*(X)}\biggr)^d\,\int_{d_*(q-1)}^\infty t^{d-1}\,f(t)\,dt.
\end{split}
\end{equation*}
Condition \eqref{strongseparat} implies $q(p)\to\infty$ as
$p\to\infty$ and so operator $T=S_X(f)-I$ is compact by Lemma
\ref{schurtest1}.

Conversely, suppose that there are two sequences $\{i_m\}$, $\{j_m\}$ so that $i_m\not= j_m$, both tend to infinity as $m\to\infty$
and $\sup_m|x_{i_m}-x_{j_m}|\le C<\infty$. Then
$$
0<f(C)\le f(|x_{i_m}-x_{j_m}|)=\langle S_X(f)e_{j_m}, e_{i_m}\rangle=\langle Te_{j_m}, e_{i_m}\rangle,
$$
which contradicts the compactness of $T$. The proof is complete.
\end{proof}

\begin{example}\label{nontrivkernel}
We show that in the converse statement of Theorem \ref{bochcomp} the condition $f>0$ cannot be relaxed to $f\ge0$.
Take the truncated power function
\begin{equation*}
f(t)=(1-t)_+^l, \qquad l>0.
\end{equation*}
It is known \cite{gol81, zas2000} that $f\in\Phi_n$ if and only if $l\ge\frac{n+1}2$. As a test sequence $X=\{x_k\}_{k\in\N}$ we put
$x_k=a_k\xi$, $\|\xi\|=1$, with
$$ a_1=0, \quad a_2=\frac12, \quad  a_k=k, \quad k=3,4,\ldots, $$
so that
$f(|x_2-x_1|)=2^{-l}$, $f(|x_i-x_j|)=0$ for the rest of the pairs $j\not=i$. The Schoenberg operator now takes the form
$$ S_X(f)= \begin{bmatrix}
 A&{}\\ {}&I
    \end{bmatrix}, \qquad
    A=\begin{bmatrix}
    1& 2^{-l} \\
    2^{-l} & 1
    \end{bmatrix} $$
and $I$ is a unit matrix. It is clear that $S_X(f)=I+T$, $rk\,T=2$, but \eqref{strongseparat} is false.
\end{example}

\subsection{Unbounded Schoenberg operators}

Conditions on an infinite matrix $\kA$ for the corresponding linear operator $A$ on $\ell^2$ to be bounded are rather
stringent. These conditions fail to hold for a number of Schoenberg's matrices (cf. Example \ref{nonell2}).

To broaden the area of our study, consider an infinite Hermitian
matrix $\kA=\|a_{kj}\|_{k,j\in\N}$, $a_{jk}=\bar a_{kj}$, satisfying the
following  conditions
\begin{equation}\label{unboundher}
\sum_{k=1}^\infty |a_{kj}|^2<\infty, \qquad \forall j\in\N.
\end{equation}
Such matrix defines in a natural way a linear operator $A'$ on $\ell^2$
which act on the standard basis vectors $\{e_k\}_{k\in\N}$, $(e_k)_m =
\delta_{km}$, as
$$
A'e_j=\sum_{k=1}^\infty a_{kj}e_k, \qquad j\in\N,
$$
extended by linearity to the linear span $\kL$ of $\{e_k\}_{k\in\N}$, so $A'$
is densely defined and $\dom(A')\supset\kL$. Being symmetric (since
$\kA$ is a Hermitian matrix), $A'$ is closable, and we denote by $A
= \overline {A'}$ its closure. The operator $A$ is called a minimal
operator associated with $\kA$.

Matrices \eqref{unboundher} are usually referred to as {\it unbounded Hermitian matrices} (unless $A$ is a bounded operator).

A maximal operator associated with such matrix $\mathcal A$ is given by
     \begin{equation}
A_{\max}f = \sum^{\infty}_{k=1}b_k e_k, \qquad b_k =\sum^{\infty}_{k=1}a_{kj}x_j,
     \end{equation}
on the domain
    \begin{equation*}
\dom\,(A_{\max})=\left\{f=\sum^{\infty}_{k=1}x_k e_k:\ \sum^{\infty}_{k=1}
\left|\sum^{\infty}_{j=1}a_{kj}x_j \right|^2 < \infty\right\}.
    \end{equation*}
It is known (see, e.g., \cite[Theorem 53.2]{AkhGlz78}) that $A_{\max}=A^*$.

Conversely, given a closed symmetric operator $A$ on a Hilbert space $\kH$, an orthonormal basis $\{h_k\}_{k\in\N}$ is called a
basis of the matrix representation of $A$ if
\begin{itemize}
\item
$h_k\in\dom(A)$,  $k\in\Bbb N$;

\item  $A$ is a minimal closed operator sending $h_k$ to $A h_k$,
$k\in\Bbb N$.
\end{itemize}

\medskip

A curious property of certain Schoenberg's matrices is that the validity of \eqref{unboundher} for at least one value of $j$ implies
relation \eqref{unboundher} to hold for all $j\in\N$. We begin with the technical lemma.

Let us say that a finite positive Borel measure $\sigma$ on $\R_+$ possesses a doubling property if there is $\kappa>0$ so that
\begin{equation}\label{doubl}
\sigma[2u,2v]\le\kappa\,\sigma[u,v], \qquad \forall [u,v]\subset\R_+.
\end{equation}

\begin{lemma}\label{curious}
Let $f\in CM_0(\R_+)$ and $\xi, \eta\in\Bbb R^n$. Then there is a constant $C=C(f,\xi,\eta)>0$ such that
      \begin{equation}\label{cureq1}
f(|x-\xi|) < C f(|x-\eta|), \qquad  \forall x\in\Bbb R^n.
      \end{equation}
The same conclusion is true for $f\in\Phi_\infty=\Phi_\infty(2)$ as long as its Schoenberg measure $\sigma$ \eqref{alphacm}
possesses the doubling property.
\end{lemma}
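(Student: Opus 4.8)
The plan is to reduce the pointwise estimate \eqref{cureq1} to a single uniform one-variable bound and then to extract that bound from the integral representations. Put $\rho:=|\xi-\eta|$ (the case $\rho=0$ being trivial), and for a given $x\in\R^n$ write $a:=|x-\xi|$, $b:=|x-\eta|$. The triangle inequality gives $|a-b|\le\rho$, so in particular $a\ge b-\rho$. Every $f\in CM_0(\R_+)$ is nonnegative and nonincreasing by \eqref{derivativescm}, and strictly positive since its Bernstein measure $\tau$ in \eqref{bernstein} is a probability measure; hence $f(a)\le f\bigl(\max\{b-\rho,0\}\bigr)$. The range $b\le\rho$ is harmless (there $f(a)\le f(0)=1$ while $f(b)\ge f(\rho)>0$), so the whole problem reduces to bounding the ratio $f(b-\rho)/f(b)$ uniformly in $b>\rho$.

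For the first assertion I would exploit the log-convexity of completely monotone functions. Applying Hölder's inequality to $f(t)=\int_0^\infty e^{-st}\,\tau(ds)$, via $e^{-s(\lambda t_1+(1-\lambda)t_2)}=(e^{-st_1})^{\lambda}(e^{-st_2})^{1-\lambda}$, shows $f(\lambda t_1+(1-\lambda)t_2)\le f(t_1)^{\lambda}f(t_2)^{1-\lambda}$; thus $g:=\log f$ is convex and finite on $\R_+$ with $g(0)=0$. For a convex function the increment $t\mapsto g(t+\rho)-g(t)$ is nondecreasing, so $b\mapsto g(b-\rho)-g(b)$ is nonincreasing on $[\rho,\infty)$ and attains its maximum at $b=\rho$, namely $g(0)-g(\rho)=-\log f(\rho)$. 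Exponentiating yields $f(b-\rho)/f(b)\le 1/f(\rho)$ for all $b\ge\rho$, and together with the range $b\le\rho$ we obtain $f(a)/f(b)\le 1/f(\rho)$ for every $x$; any $C>1/f(\rho)$ then gives the strict inequality \eqref{cureq1}.

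For $f\in\Phi_\infty(2)$ the same scheme meets the main obstacle. Here $f(t)=F(t^2)$ with $F(u):=\int_0^\infty e^{-su}\,\sigma(ds)\in CM_0(\R_+)$ by \eqref{Scoenberg_alpha-2}, and the relevant difference of arguments is $a^2-b^2=(a-b)(a+b)$, which is \emph{unbounded} as $x\to\infty$; log-convexity alone is therefore useless. Indeed, for $f(t)=e^{-t^2}$ (with $\sigma=\delta_1$, a measure failing \eqref{doubl}) the ratio $f(a)/f(b)=e^{b^2-a^2}$ is genuinely unbounded, which shows that some regularity of $\sigma$ is essential. The doubling hypothesis \eqref{doubl} is exactly what repairs this, and the crux of the argument is to convert it into the functional inequality $F(w)\le\kappa\,F(2w)$ for all $w>0$. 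Letting $\sigma'$ be the push-forward of $\sigma$ under $s\mapsto 2s$, the change of variables gives $F(2w)=\int_0^\infty e^{-sw}\,\sigma'(ds)$; on the other hand $\sigma'[2u,2v]=\sigma[u,v]$, so \eqref{doubl} reads $\sigma[2u,2v]\le\kappa\,\sigma'[2u,2v]$, i.e. $\sigma[I]\le\kappa\,\sigma'[I]$ on every interval $I\subset\R_+$, which upgrades by regularity to $\sigma\le\kappa\,\sigma'$ as measures. Integrating the nonnegative function $e^{-sw}$ against this inequality yields $F(w)\le\kappa\,F(2w)$.

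It then remains to transfer this comparison back to $f$. For $b\ge 2\rho$ one has $a\ge b-\rho\ge b/2$, hence $b^2/4\le a^2$, and using that $F$ is nonincreasing together with two applications of $F(w)\le\kappa\,F(2w)$ gives $f(a)/f(b)=F(a^2)/F(b^2)\le F(b^2/4)/F(b^2)\le\kappa^2$. The remaining range $b<2\rho$ is again controlled by $f(a)/f(b)\le 1/f(2\rho)$ from positivity and monotonicity, so taking any $C>\max\{\kappa^2,\,1/f(2\rho)\}$ establishes \eqref{cureq1}. The only genuinely delicate step is the passage from the measure inequality $\sigma\le\kappa\,\sigma'$ to $F(w)\le\kappa\,F(2w)$, which encodes the key insight that the super-Gaussian decay in $\Phi_\infty$, unlike the merely exponential decay available in $CM_0$, cannot be tamed without a growth-regularity condition on the Schoenberg measure.
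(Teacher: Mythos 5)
Your proof is correct, and it takes a genuinely different route from the paper's, most visibly in the completely monotone half. The paper proves \eqref{cureq1} for $f\in CM_0(\R_+)$ by splitting the Bernstein measure $\tau$ at a point $a_f$ chosen so that $\int_0^{a_f}\tau(ds)>\tfrac12$, distinguishing the cases $|x-\eta|\le|\xi-\eta|$ and $|x-\eta|>|\xi-\eta|$, and in the latter case bounding the tail integral $I_2$ by the near integral $I_1$ and then $I_1$ by $e^{a_f|\xi-\eta|}f(|x-\eta|)$; this yields $C=2e^{a_f|\xi-\eta|}$. You replace all of this by log-convexity of Laplace transforms (H\"older) together with monotonicity of increments of convex functions, which avoids any splitting of the measure and produces the constant $1/f(|\xi-\eta|)$; that constant is in fact optimal, since the ratio $f(|x-\xi|)/f(|x-\eta|)$ equals $1/f(|\xi-\eta|)$ at $x=\xi$ (which also explains why the strict inequality in \eqref{cureq1} forces you to take $C$ strictly larger). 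In the $\Phi_\infty$ half the two proofs share the decisive consequence of \eqref{doubl}, namely $F(w)\le\kappa F(2w)$ for $F(w):=\int_0^\infty e^{-sw}\,\sigma(ds)$, i.e.\ $f(r/\sqrt2)\le\kappa f(r)$: the paper asserts this in one line ``because of the doubling property'', while you supply the actual justification (pushforward of $\sigma$ under $s\mapsto 2s$, domination on all intervals, upgrade to all Borel sets by regularity of finite measures) --- a worthwhile addition, since this is precisely the point where the hypothesis on $\sigma$ is consumed. The surrounding geometry also differs: the paper again splits the measure at $a_f$ and uses $(r-\rho)^2\ge r^2/2-\rho^2$ to reach $f(r/\sqrt2)$, whereas you use the elementary implication $|x-\eta|\ge 2|\xi-\eta|\Rightarrow|x-\xi|\ge|x-\eta|/2$ and then apply the doubling inequality twice, treating the range $|x-\eta|<2|\xi-\eta|$ by positivity and monotonicity alone. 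What each approach buys: the paper runs one uniform template through both classes of symbols, while your argument is shorter and sharper in each case taken separately, exploiting exact log-convexity for $CM_0(\R_+)$ and isolating the functional inequality $F(w)\le\kappa F(2w)$ as the true content of the doubling assumption in the Gaussian-mixture case.
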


\begin{proof}
First, let $f\in CM_0(\R_+)$. Choose $a=a_f>0$ so that
\begin{equation}\label{cureq2}
\int_0^a \tau(ds)>\frac12 \Rightarrow \int_a^\infty \tau(ds)<\frac12.
\end{equation}
We show that \eqref{cureq1} actually holds with $C=2e^{a|\xi-\eta|}$. Consider two cases.

\noindent
1. Let first $|x-\eta|\le |\xi-\eta|$. Then since $f\le1$, one has
\begin{equation*}
f(|x-\eta|) =\int_0^\infty e^{-s|x-\eta|}\,\tau(ds)\ge \int_0^a e^{-s|x-\eta|}\,\tau(ds)>\frac12\,e^{-a|x-\eta|} \\
\ge \frac12\,e^{-a|\xi-\eta|}\,f(|x-\xi|),
\end{equation*}
as needed.

\noindent
2. Let now $|x-\eta|>|\xi-\eta|$, so $|x-\xi|\ge |x-\eta|-|\xi-\eta|>0$. The function $f$ is certainly monotone decreasing, so
\begin{equation*}
\begin{split}
f(|x-\xi|) &\le f(|x-\eta|-|\xi-\eta|)=\int_0^\infty \exp(-s|x-\eta|+s|\xi-\eta|)\tau(ds) \\
&=\left\{\int_0^a+\int_a^\infty\right\}\,\exp(-s|x-\eta|+s|\xi-\eta|)\tau(ds)=I_1+I_2.
\end{split}
\end{equation*}
Obviously, for every nonnegative and monotone decreasing function $u$ on $\R_+$, condition \eqref{cureq2} implies
$$ \int_0^a u(s)\tau(ds)\ge \frac{u(a)}2>u(a)\,\int_a^\infty \tau(ds)\ge\int_a^\infty u(s)\tau(ds). $$
Hence $I_2\le I_1$. To bound $I_1$ note that
$$ I_1\le e^{a|\xi-\eta|}\,\int_a^\infty e^{-s|x-\eta|}\,\tau(ds)=e^{a|\xi-\eta|}\,f(|x-\eta|), $$
and \eqref{cureq1} follows.

Concerning functions $f\in\Phi_\infty$, the reasoning is identical (with the obvious replacement of $\tau$ with $\sigma$)
up to the bound of $I_1$, where the doubling property comes into play. We now have
$$ I_1=\int_0^a\,\exp(-s(|x-\eta|-|\xi-\eta|)^2)\,\sigma(ds)\le e^{a|\xi-\eta|^2}\int_0^a\,e^{-\frac{s}2|x-\eta|^2}\,\sigma(ds)
      \le e^{a|\xi-\eta|^2} f\biggl(\frac{|x-\eta|}{\sqrt2}\biggr). $$
It remains only to note that
$$ f\biggl(\frac{r}{\sqrt2}\biggr)=\int_0^\infty\,e^{-\frac{s}2 r^2}\,\sigma(ds)\le
\kappa \int_0^\infty\,e^{-sr^2}\,\sigma(ds)=\kappa f(r), \quad r>0 $$
because of the doubling property \eqref{doubl}. The proof is complete.
\end{proof}

\begin{proposition}
Let $f\in CM_0(\Bbb R_+)$, $X=\{x_k\}_{k\in\N}\subset\R^n$, and let $\kS_X(f)$ be the corresponding Schoenberg matrix.
If at least one column of $\kS$ belongs to
$\ell^2$, then $\eqref{unboundher}$ holds and $\{e_k\}_{k\in\N}$ is a basis of the matrix representation for the
minimal operator $A$ associated with $\kS_X(f)$. The same conclusion is true for $f\in\Phi_\infty$ as long as its Schoenberg
measure $\sigma$ possesses the doubling property.
\end{proposition}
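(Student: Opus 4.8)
The plan is to reduce everything to the single assertion that $\ell^2$-membership of one column of $\kS_X(f)$ forces all columns into $\ell^2$; once that is in hand, both conclusions of the proposition follow with almost no further work. Note first that for $f\in CM_0(\R_+)$ the entries $a_{kj}=f(|x_k-x_j|)$ are nonnegative, so $|a_{kj}|=a_{kj}$ and column $j$ lies in $\ell^2$ precisely when $\sum_k f(|x_k-x_j|)^2<\infty$, which is exactly condition \eqref{unboundher} for that $j$.

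The key tool is Lemma \ref{curious}. Assuming some column $j_0$ belongs to $\ell^2$, I would fix an arbitrary index $j$ and apply Lemma \ref{curious} with $\xi=x_j$ and $\eta=x_{j_0}$ to produce a constant $C=C(f,x_j,x_{j_0})>0$ with
\begin{equation*}
f(|x-x_j|)<C\,f(|x-x_{j_0}|),\qquad x\in\R^n.
\end{equation*}
Reading this inequality off at the discrete points $x=x_k$ gives $f(|x_k-x_j|)<C\,f(|x_k-x_{j_0}|)$ for every $k$, and hence
\begin{equation*}
\sum_{k=1}^\infty f(|x_k-x_j|)^2\le C^2\sum_{k=1}^\infty f(|x_k-x_{j_0}|)^2<\infty.
\end{equation*}
Since $j$ is arbitrary, \eqref{unboundher} holds for all $j\in\N$, which is the first assertion.

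With \eqref{unboundher} established for every $j$, each column of $\kS_X(f)$ lies in $\ell^2$, so the minimal operator $A=\overline{A'}$ associated with $\kS_X(f)$ is genuinely defined by $A'e_j=\sum_k a_{kj}e_k$ on the linear span $\kL$ of $\{e_k\}_{k\in\N}$. Then $e_k\in\kL\subset\dom(A)$ for each $k$, and by construction $A$ is the minimal closed operator sending $e_k$ to $Ae_k=\sum_j a_{jk}e_j$; this is exactly the definition of a basis of the matrix representation of $A$. The $\Phi_\infty$ case is handled identically: the second part of Lemma \ref{curious} furnishes the same pointwise comparison inequality as soon as the Schoenberg measure $\sigma$ obeys the doubling property \eqref{doubl}, and the rest of the argument goes through verbatim. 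I expect no real obstacle, since the entire substance is carried by Lemma \ref{curious}; the only point requiring a moment's care is the observation that its pointwise domination, evaluated at the points $\{x_k\}$, transfers square-summability from one column to any other, which is precisely what the definition of a basis of the matrix representation demands.
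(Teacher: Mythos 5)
Your proposal is correct and follows essentially the same route as the paper: the entire argument rests on Lemma \ref{curious}, whose pointwise domination $f(|x-x_j|)<C\,f(|x-x_{j_0}|)$, evaluated at the points $x_k$, transfers square-summability from the given column to every other column, with the $\Phi_\infty$ case handled by the doubling hypothesis exactly as you describe. The paper's proof is just a compressed version of yours (it takes $j_0=1$ without loss of generality and declares the basis-of-matrix-representation claim obvious, which you spell out correctly).
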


\begin{proof}
Let the first column of $\kS$ belong to $\ell^2$. By Lemma \ref{curious} one has
    \begin{equation}\label{2.2New}
\sum^{\infty}_{j=1}|f(x_{j}-x_{k})|^2 \le C^2 \sum^{\infty}_{j=1}|f(x_{j}- x_{1})|^2<\infty
    \end{equation}
for each $k=2,3,\ldots$. The statement about the basis of the matrix representation is obvious.
  \end{proof}

\begin{remark}
It is easy to see that in general for functions off $CM_0(\R_+)$ the doubling property \eqref{doubl} for $\sigma$ cannot be dropped.

Put
$$ a_n:=\sqrt{\log n+2\log\log n}, \qquad n\ge2. $$
Then clearly
$$\sum_{n=2}^\infty e^{-a_n^2}=\sum_{n=2}^\infty \frac1{n\log^2 n}<\infty, \quad
\sum_{n=2}^\infty e^{-(a_n-1)^2}=\frac1{e}\sum_{n=2}^\infty \frac{e^{2a_n}}{n\log^2 n}=\infty. $$

Consider now the Schoenberg matrix $\kS_X(f)$ with
$$ f(t)=e^{-t^2}\in\Phi_\infty\backslash CM_0(\R_+), \qquad X=\{x_k\}_{k\in\N}\subset \R^1: \ \ x_1=0, \ x_2=1, \ x_n=a_n, \quad n\ge3. $$
Then
$$ \sum_{n=1}^\infty f^2(|x_n-x_1|)<\infty, \qquad \sum_{n=1}^\infty f^2(|x_n-x_2|)=\infty. $$
Certainly, now $\sigma=\delta\{1\}$ has no doubling property. Note that in this instance the conclusion of Lemma \ref{curious} is false either.
\end{remark}

In the above example the set $X$ is not separated, that is, $d_*(X)=0$. As we will see later in
Theorem \ref{inftystrong}, the Schoenberg operator $S_X(e^{-t^2})$ is bounded and invertible whenever $d_*(X)>0$,
so all columns belong to $\ell^2$.

\medskip

There is an intermediate condition on the Schoenberg matrix $\kS_X(f)$ between \eqref{unboundher} and the boundedness. Precisely,
\begin{equation}\label{maxdomain}
 \sup_j \sum_{k=1}^\infty f^2(|x_k-x_j|)=C(f,X)<\infty.
\end{equation}
In other words, $\sup_j\|S_X(f)e_j\|<\infty$.

Recall that $\delta$-regular sets are defined in Definition \ref{regularset} above.
\begin{proposition}\label{maxoperator}
Let $f\in\kM_+$, that is, $f$ enjoys condition $\eqref{function}$, and
\begin{equation}\label{decay2}
\int_0^\infty t^{d-1}f^2(t)\,dt<\infty.
\end{equation}
Then $\eqref{maxdomain}$ holds for each separated set $X\in\kX_d$. Conversely, assume that
\begin{equation}\label{maxdomain1}
\sum_{k=1}^\infty f^2(|y_k-y_j|)=C(f,Y)<\infty
\end{equation}
for some $j\in\N$ and at least one $\delta$-regular set $Y$. Then $\eqref{decay2}$ holds with $d=\dim\kL(Y)$.
\end{proposition}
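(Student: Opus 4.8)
The plan is to reduce everything to the estimates already established for Theorem \ref{bochbound}, exploiting the elementary observation that if $f\in\kM_+$ then its square $g:=f^2$ again lies in $\kM_+$: indeed $g\ge0$, $g$ is monotone decreasing as a product of two nonnegative decreasing functions, and $g(0)=f(0)^2=1$. Under this substitution condition \eqref{decay2} becomes exactly $t^{d-1}g(\cdot)\in L^1(\R_+)$, while the quantity in \eqref{maxdomain} becomes $\sup_j\sum_k g(|x_k-x_j|)$, i.e. the supremum over $j$ of the $j$-th absolute row sum of the Schoenberg matrix $\kS_X(g)=\kS_X(f^2)$. That is precisely the Schur-test constant of Lemma \ref{schurtest} for $\kS_X(f^2)$.

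For the forward implication I would therefore apply verbatim the computation leading to \eqref{2.13} in the proof of Theorem \ref{bochbound}(i), but with $g=f^2$ in the role of $f$. Writing $\varepsilon=d_*(X)>0$ and decomposing $X$ into the layers $X^{(j)}_m$ of \eqref{setinlayer}, Lemma \ref{layer} together with the monotonicity of $g$ gives
\begin{equation*}
\sum_{k=1}^\infty f^2(|x_k-x_j|)=\sum_{k=1}^\infty g(|x_k-x_j|)\le 1+d\,5^d\sum_{m=1}^\infty m^{d-1}\,g(m\varepsilon),
\end{equation*}
and the right-hand side is finite by Lemma \ref{techlemma} applied to $h(\cdot)=g(\varepsilon\,\cdot)$, precisely because $t^{d-1}g\in L^1(\R_+)$. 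Since this bound depends only on $d$, $\varepsilon=d_*(X)$ and $\int_0^\infty t^{d-1}f^2$, and not on $j$, it is uniform in $j$, which yields \eqref{maxdomain}.

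For the converse I would run the lower-bound argument of Theorem \ref{bochbound}(iii) in its simpler single-column form, since here I am given only that one row sum $\sum_k g(|y_k-y_j|)$ is finite rather than that an operator is bounded; in particular no test vector is needed. Fixing the index $j$ and the $\delta$-regular set $Y$, I would split the points $y_k$ with $|y_k-y_j|\ge r_0$ into the disjoint shells $Y^{(j)}_{r_m}(\delta)$ of \eqref{regseparat} at the arithmetic-progression radii $r_m=r_0+m\delta$, $m=0,1,\dots$, which tile $[r_0,\infty)$. Using the regularity bound $|Y^{(j)}_{r_m}(\delta)|\ge c_0\,r_m^{d-1}$ and bounding $g$ on each shell from below by its value $g(r_m+\delta)$ at the far edge, monotonicity gives
\begin{equation*}
\infty>\sum_{k=1}^\infty g(|y_k-y_j|)\ge c_0\sum_{m\ge0} r_m^{d-1}\,g(r_m+\delta).
\end{equation*}
It then remains to deduce $\int_0^\infty t^{d-1}g(t)\,dt<\infty$ from the convergence of this series, which is the integral-comparison principle of Lemma \ref{techlemma} transcribed from unit steps to steps of length $\delta$.

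The main obstacle is exactly this last comparison, and it calls for careful bookkeeping of the monotonicity. Bounding the integral over the shifted shell $[r_m+\delta,\,r_m+2\delta]$ by $\delta\,(r_m+2\delta)^{d-1}g(r_m+\delta)$, one sees that the value $g(r_m+\delta)$ matches the corresponding term of the convergent series, while $(r_m+2\delta)^{d-1}\le C\,r_m^{d-1}$ for all large $m$ (the ratio tending to $1$); summing over $m$ then bounds $\int_{r_0+\delta}^\infty t^{d-1}g\,dt$ by a constant multiple of the finite series above, and since $g\le1$ the tail $\int_0^{r_0+\delta}t^{d-1}g\,dt$ is trivially finite, whence $t^{d-1}f^2=t^{d-1}g\in L^1(\R_+)$. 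The one point to watch is that the integration shell must be shifted by one step relative to the counting shell so that the arguments of $g$ align; without this shift the slightly larger value of $g$ on the left edge would spoil the domination. Regularity of $Y$ is genuinely needed here, for a sparse set a single column could be summable while $t^{d-1}g\notin L^1(\R_+)$.
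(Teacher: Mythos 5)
Your proof is correct and follows essentially the same route as the paper: the forward direction is the Schur-type row-sum estimate of Theorem \ref{bochbound}(i) applied to $f^2\in\kM_+$, and the converse is the shell-counting lower bound from $\delta$-regularity combined with monotonicity and integral comparison. The only cosmetic difference is that you work directly with the $\delta$-shells of Definition \ref{regularset} and transcribe the comparison of Lemma \ref{techlemma} to steps of length $\delta$ by hand, whereas the paper reuses its layers of width $d_*(Y)$ and invokes the lower bound of Lemma \ref{techlemma} after reindexing; both amount to the same estimate.
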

\begin{proof}
Let \eqref{decay2} hold. We apply Lemma \ref{techlemma} with $h=f^2$ and obtain as above
$$ \sum_{k=1}^\infty f^2(|x_k-x_j|)\le 1+\sum_{m=1}^\infty |X_m^{(j)}|\,f^2(d_*(X)m)
\le 1+d^2\biggl(\frac{5}{d_*(X)}\biggr)^d\,\int_{0}^\infty s^{d-1}f^2(s)ds, $$
so \eqref{maxdomain} follows.

Conversely, let $f$ satisfy \eqref{maxdomain1} for some $j\in\N$ and
some $\delta$-regular set $Y=\{y_j\}_{j\in\N}$. In view of the lower bound \eqref{twosidebound} one has by Lemma
\ref{techlemma},
\begin{equation*}
\begin{split}
\sum_{k=1}^\infty f^2(|y_k-y_j|) &=1+\sum_{m=1}^\infty\,\sum_{y_k\in Y^{(j)}_m}\,f^2(|y_k-y_j|)
\ge 1+c_2(d)\sum_{m=1}^\infty m^{d-1}\,f^2(d_*(Y)(m+1)) \\
&\ge 1+c_3(d)\sum_{m=2}^\infty m^{d-1}\,f^2(d_*(Y)m)\ge
1+c_4(d)\,\int_{2d_*(Y)}^\infty s^{d-1}f^2(s)\,ds.
\end{split}
\end{equation*}
The proof is complete.    \end{proof}

\begin{corollary}\label{cor1}
If $e_j\in\dom S_X(f)$ for some $j\in\N$ and all $X\in\kX_n$ then \eqref{maxdomain} holds.
\end{corollary}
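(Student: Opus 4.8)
The plan is to read the hypothesis $e_j\in\dom S_X(f)$ as the statement that the $j$-th column of $\kS_X(f)$ lies in $\ell^2$, i.e. $\sum_{k}f^2(|x_k-x_j|)<\infty$, which is precisely condition \eqref{maxdomain1} for a single index $j$. The strategy is then a two-step bootstrap through Proposition \ref{maxoperator}: first convert the column square-summability on a \emph{regular} set into the integral decay \eqref{decay2}, and then feed \eqref{decay2} back through the forward implication of the same proposition to obtain the uniform bound \eqref{maxdomain} for \emph{every} separated $X$.

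First I would apply the hypothesis to a fixed $\delta$-regular set of full dimension, for instance $Y=\Z^n$, which is $\delta$-regular for every $\delta>0$ (see the remark after Definition \ref{regularset}) and satisfies $\dim\kL(Y)=n$. Since $e_j\in\dom S_Y(f)$ for some $j$, condition \eqref{maxdomain1} holds for this $Y$. The converse half of Proposition \ref{maxoperator} then yields \eqref{decay2} with $d=\dim\kL(Y)=n$, that is $\int_0^\infty t^{n-1}f^2(t)\,dt<\infty$.

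Next I would upgrade this to all intermediate dimensions. Because $f\in\kM_+$ is bounded by $f(0)=1$, the near-origin part $\int_0^1 t^{d-1}f^2(t)\,dt\le\int_0^1 t^{d-1}\,dt$ is finite for every $d\ge1$, while on $[1,\infty)$ we have $t^{d-1}\le t^{n-1}$ whenever $d\le n$; hence \eqref{decay2} with exponent $n-1$ forces \eqref{decay2} with exponent $d-1$ for all $1\le d\le n$. Finally, given an arbitrary $X\in\kX_n$ with $d=\dim\kL(X)\le n$, the forward half of Proposition \ref{maxoperator} applies with this $d$ and delivers \eqref{maxdomain}, completing the argument.

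The only genuine obstacle is the dimension mismatch: the regular set used to produce \eqref{decay2} has dimension $n$, whereas the forward implication must be invoked at the possibly smaller value $d=\dim\kL(X)$. The boundedness of $f$ resolves this cleanly, so after that observation the proof is just a concatenation of the two halves of Proposition \ref{maxoperator}. A secondary point worth stating explicitly is the identification of $e_j\in\dom S_X(f)$ with square-summability of the $j$-th column, which is what links the hypothesis to \eqref{maxdomain1}.
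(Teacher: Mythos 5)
Your proof is correct and takes exactly the route the paper intends: the corollary is stated without proof precisely because it is the concatenation of the two halves of Proposition \ref{maxoperator} (the hypothesis applied to a full-dimensional $\delta$-regular set such as $\Z^n$ yields \eqref{decay2} via the converse part, and the forward part then gives \eqref{maxdomain} for every separated $X$), which is what you do, including the correct identification of $e_j\in\dom S_X(f)$ with square-summability of the $j$-th column. Your dimension-mismatch fix via boundedness of $f\in\kM_+$ is sound; a marginally shorter alternative is to apply the hypothesis directly to the $\delta$-regular set $\Z^d\times\{0\}^{n-d}$, whose span has the same dimension $d$ as the given $X$, which makes that step unnecessary.
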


\begin{remark} Condition \eqref{maxdomain} for an individual set $X$ has nothing to do with bound
\eqref{decay2}. Indeed, let $f$ tend to zero arbitrarily slowly as $x\to\infty$. Choose a sequence of
positive numbers $\{t_k\}$, $t_1=0$ so that $f(t_k)\le e^{-k}$. Now take a set $X=\{x_k\}_{k\in\N}$
with $x_k=t_k\xi$, $k\in\N$, $\xi$ a unit vector. Then
$$
 \sum_{i=1}^\infty f^2(|x_k - x_1|)\le \sum_k e^{-2k}<\infty
$$
regardless of whether condition \eqref{decay2} holds or not.
\end{remark}
The example below illustrates Proposition \ref{maxoperator} and Theorem \ref{bochbound}.

\begin{example}
Let $h(r)=(1+r)^{-1}\in CM_0(\R_+)$. Take $X=\Z_+^2=\{(p,q): p,q\in\Z_+\}$ labeled in
the following way
$$ X=\bigcup_{m=0}^\infty X_m, \qquad X_m=\{x_k^{(m)}\}_{k=0}^m, \quad x_k^{(m)}=(m-k,k), \quad X_0=\{(0,0)\}. $$
As $|x_k^{(m)}|^2=(m-k)^2+k^2=m^2+2k(k-m)\le m^2$, we can easily compute the sum in \eqref{maxdomain}
$$
\sum_{m=0}^\infty\sum_{k=0}^m h^2(|x_k^{(m)}|)=\sum_{m=0}^\infty\sum_{k=0}^m \frac1{(1+|x_k^{(m)}|)^2}\ge
\sum_{m=0}^\infty\sum_{k=0}^m \frac1{(1+m)^2}=\sum_{m=0}^\infty \frac1{1+m}=+\infty,
$$
which is consistent with Proposition \ref{maxoperator}, since $d=\dim\kL(X)=2$, and condition \eqref{decay2} is violated.

On the other hand, let $X=\Z_+$, so we come to a version of the well-known Hilbert--Toeplitz matrix
\begin{equation}\label{hilb-toep}
\kS_X(h)=\|(1+|i-j|)^{-1}\|_{i,j\in\N}, \qquad h(r)=\frac1{1+r}=\int_0^\infty e^{-sr}\,e^{-s}\,ds.
\end{equation}
Now $d=1$, so by Proposition \ref{maxoperator}, \eqref{maxdomain1} holds. Yet the operator $S_X(h)$ is unbounded in view of
Theorem \ref{bochbound} ($\Z_+$ is a $1$-regular set). We show later in Proposition \ref{toep2} that $S_X(h)$
is a positive definite and self-adjoint operator.
\end{example}

\medskip

An important property of a minimal Schoenberg operator $A=S_X(f)$
constitutes the content of the following theorem.
 \begin{theorem}\label{semibound_Schoenb}
Let $f\in \Phi_\infty(\alpha)$, $\alpha\in (0, 2]$, $X=\{x_j\}_{j\in\N}\subset \R^n$, and $X\in\mathcal X_n$. Assume
that the Schoenberg matrix ${\mathcal S}_{X}(f)$ satisfies condition \eqref{unboundher}. Then the $($minimal$)$
Schoenberg operator $S_{X}(f)$  associated with the matrix ${\mathcal S}_{X}(f)$ is a symmetric positive definite operator, i.e.,
\begin{equation}\label{strongposit}
\langle S_{X}(f)\xi,\xi\rangle\ge\varepsilon|\xi|^2, \qquad
\xi\in\dom S_X(f),\quad \varepsilon>0,
\end{equation}
and so the deficiency indices $n_{\pm}(A) =
\dim\ker\bigl(A^*\bigr)$. In particular, $S_X(f)$ is self-adjoint if
and only if $\ker A^*=\{0\}$.
\end{theorem}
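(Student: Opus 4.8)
The plan is to reduce the positivity assertion to the strong $X$-positive definiteness of $f$, and then to read off the statement about deficiency indices from the standard theory of points of regular type. Since $f\in\Phi_\infty(\alpha)$ and $X\in\kX_n$ is separated, Theorem~\ref{intrinftystrong}(i) (whose full proof is Theorem~\ref{inftystrong}) supplies a constant $c=c(X)>0$, independent of $m$ and of $\xi$, with
\[
\sum_{k,j=1}^m f(|x_k-x_j|)\,\xi_j\overline{\xi_k}\ge c\sum_{k=1}^m|\xi_k|^2,\qquad \xi\in\C^m.
\]
This is exactly the ingredient I would use; everything else is soft.

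First I would record the form bound on the minimal domain. By hypothesis \eqref{unboundher} the symmetric operator $A':=S_X(f)$ is well defined and densely defined on the linear span $\kL=\Span\{e_k\}_{k\in\N}$, hence closable, with closure $A=\overline{A'}$. For $\xi=\sum_k\xi_k e_k\in\kL$, the definition \eqref{matrixoperator} together with $f(|x_k-x_j|)=f(|x_j-x_k|)$ gives $\langle A'\xi,\xi\rangle=\sum_{k,j}f(|x_k-x_j|)\xi_j\overline{\xi_k}$, so the displayed inequality reads $\langle A'\xi,\xi\rangle\ge c|\xi|^2$ on $\kL$.

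Next I would propagate this bound to all of $\dom A$ by closure. Given $\xi\in\dom A$, pick $\xi_n\in\kL$ with $\xi_n\to\xi$ and $A'\xi_n\to A\xi$ in $\ell^2$; passing to the limit in $\langle A'\xi_n,\xi_n\rangle\ge c|\xi_n|^2$ and using continuity of the inner product yields $\langle A\xi,\xi\rangle\ge c|\xi|^2$. This is \eqref{strongposit} with $\varepsilon=c$, so $S_X(f)$ is symmetric and positive definite, bounded below by $c>0$.

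Finally I would deduce the deficiency-index claim. From $A\ge cI$ one checks that every $z\in\C\setminus[c,\infty)$ is a point of regular type: for real $z<c$ one has $\|(A-z)\xi\|\ge(c-z)|\xi|$, and for non-real $z$ one has $\|(A-z)\xi\|\ge|\imm z|\,|\xi|$. The set $\C\setminus[c,\infty)$ is connected and contains $0,i,-i$, and on each connected component of the set of regular-type points the defect number $\dim\ker(A^*-\bar z)=\dim\ran(A-z)^\perp$ is constant. Evaluating at these three points gives $n_+(A)=n_-(A)=\dim\ker A^*$, whence $S_X(f)$ is self-adjoint exactly when $\ker A^*=\{0\}$. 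The only genuinely hard step is the uniform lower bound $c>0$; because we may invoke Theorem~\ref{intrinftystrong}, the rest is routine.
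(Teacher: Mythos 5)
Your proof is correct and follows essentially the same route as the paper's: both invoke Theorem~\ref{inftystrong}(i) (strong $X$-positive definiteness of $f\in\Phi_\infty(\alpha)$ on separated sets) to get the uniform lower bound $\langle A'\xi,\xi\rangle\ge c\|\xi\|^2$ on finite linear combinations of the basis vectors, and then pass to the closure $A=\overline{A'}$. The only difference is one of detail: the paper compresses the final steps into ``taking the closure we get the statement,'' whereas you spell out the limiting argument and the standard regular-type-point reasoning ($\C\setminus[c,\infty)$ connected, defect numbers constant there, evaluation at $0,\pm i$) that yields $n_+(A)=n_-(A)=\dim\ker A^*$ and the self-adjointness criterion.
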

   \begin{proof}
According to  Theorem \ref{inftystrong},
the function $f \in \Phi_\infty(\alpha)$ is strongly $X$-positive
definite, i.e., there exists $\varepsilon>0$  such that
     \begin{equation}\label{2.26}
\sum_{j,k =1}^{N} f(|x_j - x_k|)\xi_j\overline{\xi}_k \ge
\varepsilon\,\sum_{j=1}^N | \xi_j|^2, \qquad \xi=\{\xi_j\}^N_1 \in
\Bbb C^N,\quad  \forall N\in\Bbb N.
\end{equation}
Due to assumption \eqref{unboundher} the basis $\{e_j\}_{j\in\N}$ is
a basis of the matrix representation of the minimal operator
$S_X(f)$ associated with the Schoenberg matrix ${\mathcal S}_X(f)$.
Therefore inequality  \eqref{2.26} means that for any finite vector
$\xi=(\xi_1,\xi_2, \ldots,\xi_N,0,0,\ldots)$
  \begin{equation*}
(A\xi,\xi) = (A'\xi,\xi)\ge \varepsilon\,\sum_{k=1}^N | \xi_k|^2 =
\varepsilon\,\|\xi\|^2.
    \end{equation*}
Taking the closure we get the statement.
   \end{proof}

Note that the proof of our main result about $\Phi_\infty$-functions-- Theorem \ref{inftystrong} --
in the next section is completely independent of the above Theorem \ref{semibound_Schoenb}.

The converse to Theorem \ref{semibound_Schoenb} is true in more general setting.

     \begin{proposition}\label{separ}
Assume that the Schoenberg matrix ${\mathcal S}_{X}(f)$, $f\in\Phi_n$, satisfies condition $\eqref{unboundher}$, and the
$($minimal$)$ Schoenberg operator $S_{X}(f)$  associated with the matrix ${\mathcal S}_{X}(f)$ satisfies $\eqref{strongposit}$,
i.e., it is positive definite. Then $X$ is separated, i.e., $d_*(X)>0$.
\end{proposition}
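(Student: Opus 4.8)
The plan is to argue by contradiction, exploiting the positive definiteness \eqref{strongposit} on the simplest possible test vectors, namely differences of two basis vectors. Suppose, contrary to the claim, that $X$ is not separated, so that $d_*(X)=0$ in \eqref{separat}. Then there are two sequences of indices $\{p_m\}$ and $\{q_m\}$ with $p_m\not=q_m$ along which the mutual distances degenerate, $|x_{p_m}-x_{q_m}|\to0$ as $m\to\infty$ (the points of $X$ are distinct by Definition \ref{defboch}, but nothing prevents their pairwise distances from tending to zero).

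First I would record that, thanks to assumption \eqref{unboundher}, each column of $\kS_X(f)$ lies in $\ell^2$, so $A'e_k\in\ell^2$ for every $k$, and hence the linear span $\kL=\Span\{e_k\}_{k\in\N}$ is contained in $\dom S_X(f)=\dom\overline{A'}$. In particular the two-point test vector $\xi^{(m)}:=e_{p_m}-e_{q_m}$ lies in $\dom S_X(f)$ and satisfies $|\xi^{(m)}|^2=2$. A direct computation with the inner-product convention \eqref{matrixoperator}, using $(\kS_X(f))_{kl}=f(|x_k-x_l|)$ together with the symmetry $f(|x_k-x_l|)=f(|x_l-x_k|)$ and $f(|x_k-x_k|)=f(0)$, gives
\begin{equation*}
\langle S_X(f)\xi^{(m)},\xi^{(m)}\rangle = 2f(0)-2f(|x_{p_m}-x_{q_m}|).
\end{equation*}

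The conclusion then follows by inserting this into \eqref{strongposit}: the uniform positive definiteness bound forces
\begin{equation*}
f(0)-f(|x_{p_m}-x_{q_m}|)\ge\varepsilon>0, \qquad m\in\N.
\end{equation*}
On the other hand, $f$ is continuous on $\R_+$ because $f\in\Phi_n$, so letting $m\to\infty$ and using $|x_{p_m}-x_{q_m}|\to0$ yields $f(|x_{p_m}-x_{q_m}|)\to f(0)$, whence $f(0)-f(|x_{p_m}-x_{q_m}|)\to0$, contradicting the uniform lower bound $\varepsilon>0$. Therefore $X$ must be separated, $d_*(X)>0$. There is essentially no analytic difficulty here: the only point requiring a little care is the domain issue, i.e. checking that the two-point test vectors actually belong to $\dom S_X(f)$, which is exactly what \eqref{unboundher} guarantees; everything else reduces to the elementary two-by-two computation above combined with the continuity of $f$ at the origin.
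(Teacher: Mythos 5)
Your proof is correct and follows essentially the same route as the paper: both test the positive definiteness bound \eqref{strongposit} on the two-point vectors $e_k-e_j$ (noting they lie in $\dom S_X(f)$ by \eqref{unboundher}), compute $\langle S_X(f)(e_k-e_j),e_k-e_j\rangle=2f(0)-2f(|x_k-x_j|)$, and conclude $f(0)-f(|x_k-x_j|)\ge\varepsilon$ uniformly, which together with the continuity of $f\in\Phi_n$ at the origin forces $d_*(X)>0$. The only cosmetic difference is that you phrase it as a contradiction along a sequence of degenerating pairs, while the paper states the uniform bound directly and calls the conclusion immediate.
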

\begin{proof}
In the above assumptions one has
\begin{equation}\label{positive}
\langle S_X(f)h,h\rangle\ge c\|h\|^2, \qquad 0<c<\infty
\end{equation}
for each $h\dom(S_X(f))$. Hence putting $h=e_k-e_j\in\dom(S_X(f))$, we see that
$$ \langle S_X(f)h,h\rangle=2f(0)-2f(|x_k-x_j|)\ge 2c, $$
so $f(|x_k-x_j|)\le f(0)-c$, $c>0$, which immediately implies $d_*(X)>0$.
\end{proof}

Proposition \ref{separ} says that if $d_*(X)=0$ and $S_X(f)$ is bounded for $f\in\Phi_n$, then $0\in\sigma(S_X(f))$.
It is easy to manufacture such $X$ for $f(t)=e^{-t}$ (cf. \cite[Lemma 3.7]{MalSch12}).

There is a simple function theoretic analogue of Proposition \ref{separ}.

     \begin{proposition}\label{ftsepar}
If $f\in\Phi_n$ is strongly $X$-positive definite, then $X$ is separated.
\end{proposition}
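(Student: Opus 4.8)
The plan is to transcribe the operator-theoretic argument of Proposition~\ref{separ} into the purely function-theoretic language of Definition~\ref{def-X-posit_definit}, thereby avoiding any reference to the Schoenberg operator $S_X(f)$ (which need not even be defined in this generality). The role played there by the vector $h=e_k-e_j$ will now be played by a two-point test sequence. Concretely, I would fix an arbitrary pair of distinct points $x_i,x_j\in X$, $i\neq j$, and apply the defining inequality \eqref{stronglyxpositive} to the finite configuration $\{x_i,x_j\}$ (so $m=2$) with the coefficients $\xi=(1,-1)$.

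For this choice the left-hand side of \eqref{stronglyxpositive} collapses to
\[
 f(0)-f(|x_i-x_j|)-f(|x_i-x_j|)+f(0)=2f(0)-2f(|x_i-x_j|),
\]
while the right-hand side equals $c\,(|1|^2+|-1|^2)=2c$. Hence strong $X$-positive definiteness would yield the uniform gap
\[
 f(0)-f(|x_i-x_j|)\ge c>0\qquad\text{for every pair }i\neq j,
\]
with one and the same constant $c$.

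It then remains to deduce separation by contradiction. Suppose $d_*(X)=0$. Then there is a sequence of distinct pairs $(x_{i_\ell},x_{j_\ell})$, $i_\ell\neq j_\ell$, with $|x_{i_\ell}-x_{j_\ell}|\to 0$. Since $f\in\Phi_n$ is continuous on $\R_+$ (this being part of the definition of a radial positive definite function), one has $f(|x_{i_\ell}-x_{j_\ell}|)\to f(0)$, so the left-hand side of the displayed gap tends to $0$, contradicting $c>0$. Therefore $d_*(X)>0$, i.e.\ $X$ is separated. I do not anticipate any genuine obstacle here: the only point requiring care is that the constant $c$ in \eqref{stronglyxpositive} be independent of $m$ and of the chosen points, which is precisely the content of \emph{strong} (as opposed to merely \emph{strict}) $X$-positive definiteness; the strict notion, with $c=0$, would give only $f(|x_i-x_j|)<f(0)$ and would not suffice to force a uniform lower bound on the distances.
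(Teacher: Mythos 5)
Your proposal is correct and is essentially the paper's own proof: the paper likewise tests the inequality \eqref{stronglyxpositive} on two-point configurations with coefficients proportional to $(1,-1)$, obtaining the uniform gap $f(0)-f(|x_j-x_k|)\ge c>0$ for all pairs. The only difference is that you spell out the final continuity-at-the-origin argument that the paper leaves implicit, which is a harmless (indeed welcome) elaboration.
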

\begin{proof}
By the definition we have for all $k,j$
$$ f(0)(|\xi_1|^2+|\xi_2|^2)-f(|x_j-x_k|)(\xi_1\bar\xi_2+\bar\xi_1\xi_2)\ge c(|\xi_1|^2+|\xi_2|^2). $$
By putting $\xi_1=\xi_2\not=0$ we see that
$$ f(0)-f(|x_j-x_k|)\ge c>0, $$
so $X\in\kX_n$, as needed.
\end{proof}

\subsection{Schoenberg--Toeplitz operators}

Although we have no sufficient conditions for general Schoenberg operators $S_X(f)$ to be self-adjoint,
Theorem \ref{semibound_Schoenb} gives an essential step toward proving self-adjointness, since it reduces
this problem to the study of $\ker A^*$.

  \begin{definition}
(i) Let $\N_0:= \N\cup\{0\}$.  Recall that a matrix $\mathcal A :=
\|a_{jk}\|_{j,k\in \N_0}$ is called a {\it Toeplitz matrix} if there is a
numerical sequence $\{a_m\}_{m\in \Z}$ such that $a_{jk}=a_{j-k}$
for every $j,k\in\N_0$.

(ii) An operator $A$ defined at least on the set of analytic
polynomials $Pol_+$ is called a Toeplitz operator if its matrix with
respect to the basis $\{z^k\}_{k\in\Bbb N_0}=\{e^{ik\varphi}\}_{k\in\Bbb
N_0}$ is the Toeplitz matrix.
    \end{definition}
It is known that a Toeplitz operator is characterized by the identity
          \begin{equation}
S^* A S= A,
          \end{equation}
where $S$ is a unilateral shift in $l^2$. According to the  basic
assumption \eqref{unboundher}  the Toeplitz matrix $\mathcal A$
defines an operator in $\ell^2$ if  $\{a_k\}\in l^2(\Bbb Z)$, i.e.,
       \begin{equation}\label{2.47}
\sum_{j\in\Bbb Z}|a_j|^2<\infty.
       \end{equation}
In this case the {\it Toeplitz symbol} is a function given by
   \begin{equation}\label{2.48}
a(A,e^{i\varphi}):=\sum_{k\in\Z} a_ke^{ik\varphi}\in L^2(\Bbb T),
\qquad \varphi\in [-\pi,\pi],
  \end{equation}
\begin{lemma}\label{Toeplitz_Lemma}
Let $a_{-j}=  \bar {a_{j}}$, $j\in \N$, i.e., the Toeplitz matrix $\mathcal A=\|a_{j-k}\|_{j,k\in\Bbb N_0}$ is a
Hermitian matrix. Assume also that $\mathcal A$ satisfies \eqref{2.47} and  the  minimal symmetric
Toeplitz operator $A$ associated in $l^2(\Bbb N)$ with $\mathcal A$ is semibounded from below.  Then it is self-adjoint, $A= A^*$.
   \end{lemma}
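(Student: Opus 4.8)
The plan is to pass to the Hardy-space realization of $A$ and reduce $A=A^*$ to the absence of eigenvectors of $A^*=A_{\max}$ below the lower bound. Identify $\ell^2(\N_0)$ with the Hardy space $H^2$ via $e_k\mapsto z^k$, and note that, since $a_{-j}=\overline{a_j}$, the symbol $a=a(A,e^{i\varphi})=\sum_{k}a_ke^{ik\varphi}$ from \eqref{2.48} is a \emph{real-valued} function in $L^2(\T)$. Writing $P_+$ for the Riesz projection onto $H^2$, the operator $A$ acts on analytic polynomials by $Ap=P_+(ap)$, while $A^*=A_{\max}$ sends $g\in H^2$ to $P_+(ag)$, which is legitimate precisely when $P_+(ag)\in\ell^2$ (observe $ag\in L^1(\T)$ as $a,g\in L^2$). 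The whole argument will live on the circle $\T$.

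First I would extract from semiboundedness a pointwise bound on the symbol. If $A\ge m$, then for every analytic polynomial $p$ one has $\langle Ap,p\rangle=\int_{\T}a\,|p|^2\,dm\ge m\int_{\T}|p|^2\,dm$, so $\int_{\T}(a-m)\,|p|^2\,dm\ge0$. By the Fejér--Riesz theorem the functions $|p|^2$ run over \emph{all} nonnegative trigonometric polynomials, and these are uniformly dense (via nonnegative Fejér means) in the nonnegative continuous functions; since $a-m\in L^1(\T)$, this forces $a\ge m$ a.e. Now fix any $\lambda<m$ and put $\delta:=m-\lambda>0$, so that $a-\lambda\ge\delta>0$ a.e. Because $A$ is closed, symmetric and bounded below, its deficiency indices coincide and each $\lambda<m$ is a point of regular type; hence it suffices to prove $\ker(A^*-\lambda)=\{0\}$.

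So suppose $g\in\dom(A^*)$ with $A^*g=\lambda g$, i.e. $P_+\bigl((a-\lambda)g\bigr)=0$. Thus the $L^1$-function $\psi:=(a-\lambda)g$ has only strictly negative Fourier coefficients, whence $\phi:=\overline{\psi}\in H^1$ with $\phi(0)=0$ (here $g\in H^2$ has nonnegative spectrum and $\psi$ is purely co-analytic). Since $a-\lambda$ is real, $\phi=\overline{(a-\lambda)g}=(a-\lambda)\overline{g}$ on $\T$, and therefore the boundary product $h:=\phi\,g=(a-\lambda)|g|^2\ge0$ is real and nonnegative. On the other hand $h$ is the boundary value of the product of the analytic extensions of $\phi\in H^1$ and $g\in H^2$, so $h\in H^{2/3}$. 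The decisive function-theoretic input (this is the point I owe to A. Kheifets) is that a function in the Smirnov class, and in particular in any $H^p$ with $p>0$, whose boundary values are real a.e. must be a constant; I would prove it by inner--outer factorization, since realness of the boundary values forces the inner factor to have $\pm1$ boundary values, hence to be a real unimodular constant, after which the conjugate-function computation forces the outer factor to have constant modulus. Granting this, $h\equiv h(0)=\phi(0)g(0)=0$, so the analytic product $\phi g$ vanishes identically on the disk; as both factors are analytic, either $g\equiv0$ or $\phi\equiv0$, and in the latter case $\psi=(a-\lambda)g=0$ a.e., whence $g=0$ because $a-\lambda\ge\delta>0$. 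In all cases $\ker(A^*-\lambda)=\{0\}$, and therefore $A=A^*$.

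The main obstacle is exactly the low integrability at the end: the product $h=\phi g$ lies only in $H^{2/3}$, so one cannot invoke the elementary Poisson-integral argument (valid in $H^1$, where a real boundary function exhibits $h$ as a real-valued harmonic, hence constant, analytic function). The inner--outer factorization argument is what covers the range $p<1$, and it is the genuinely nontrivial step; the Fejér--Riesz reduction and the semibounded deficiency-index bookkeeping are routine.
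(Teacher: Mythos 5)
Your overall architecture is sound: reducing self-adjointness to $\ker(A^*-\lambda)=\{0\}$ for one $\lambda$ below the lower bound, extracting $a\ge m$ a.e.\ from semiboundedness, passing to the boundary equation $P_+\bigl((a-\lambda)g\bigr)=0$, and forming the product $h=\phi\,g=(a-\lambda)|g|^2$. The genuine gap is your ``decisive function-theoretic input'': the claim that a Smirnov-class function (or a function in some $H^p$, $p>0$) with real a.e.\ boundary values must be constant is \emph{false}. The Herglotz kernel
$$ h_0(z)=i\,\frac{1+z}{1-z} $$
lies in $H^p$ for every $p<1$ (hence in $N^+$), its boundary values $-\cot(\varphi/2)$ are real a.e., and it is not constant; more generally, $i$ times the Herglotz transform of any nonzero real singular measure gives such examples. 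Your sketched proof of the lemma fails at the same point: realness of $h=IO$ only forces $\arg I\equiv-\arg O\ (\mathrm{mod}\ \pi)$ a.e., not $I=\pm1$, and indeed $h_0$ above is \emph{outer} with nonconstant modulus, so the ``conjugate-function computation'' cannot force constancy. What saves your argument is the information you did not use: $h\ge0$ a.e., not merely $h$ real, and nonnegativity has a smaller critical exponent. By the Neuwirth--Newman theorem (``positive $H^{1/2}$ functions are constant'') and $h\in H^{2/3}\subset H^{1/2}$, one gets $h\equiv h(0)=\phi(0)g(0)=0$, after which your conclusion $g\equiv0$ follows exactly as you wrote. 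The exponent $1/2$ is sharp here: $-z/(1-z)^2$ has boundary values $1/(4\sin^2(\varphi/2))\ge0$ and lies in every $H^p$ with $p<1/2$. So your proof is repairable, but as written it rests on a false lemma, and the threshold issue it runs into is precisely the delicate point.

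For comparison, the paper's proof never leaves $H^1$: from $a\ge\varepsilon$ a.e.\ it factors $a=|D|^2$ with $D$ outer, $D\in H^2$, $D^{-1}\in H^\infty$, and turns the kernel equation $p\,a\in H^1_-$ into $p\,D=p_-/\overline{D}$, whose left-hand side is in $H^1$ and right-hand side in $H^1_-$, forcing $p\equiv0$. Multiplying by the outer square root of the symbol instead of by $g$ keeps all products integrable and sidesteps the $p<1$ pathology entirely; your route can be made to work, but only by invoking the (nontrivial, sharp) Neuwirth--Newman theorem in place of the false ``real boundary values'' statement.
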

\begin{proof}
Without  loss of generality we can assume that  $A$ is positive
definite.  In this case  it suffices to make sure that the conjugate
(maximal) operator $A^*$ has the trivial kernel. Since $A^*=A_{max}$
acts by means of the same matrix $\kS_X(f)$ (but defined  on the
maximal domain), the latter property is equivalent to
\begin{equation}\label{trivkern}
\begin{bmatrix}
a_0& a_1& a_2&\ldots \\
a_{-1}& a_0&a_1&\ldots \\
a_{-2}& a_{-1}& a_0&\ldots \\
\vdots&\vdots&\vdots& &
\end{bmatrix}
\begin{bmatrix}
p_0 \\ p_1 \\ p_2 \\ \vdots
\end{bmatrix}=\mathbb{O} \Rightarrow p_j\equiv 0, \quad p=\{p_j\}\in\ell^2.
\end{equation}

To prove implication \eqref{trivkern} it is instructive to rephrase
the problem in the function theoretic terms.

Let $U$ denote  the multiplication (shift) operator on $L^2(\T)$.
The equality in \eqref{trivkern} means that the function
$$
p(t):=\sum_{j\ge0} p_jt^j\in H^2
$$
is orthogonal to the system $\{U^k a\}_{k\ge0}$, where $a\in L^2(\T)$
is the Toeplitz symbol \eqref{2.48}.
 In other words, $p\,a\in L^1(\T)$ is orthogonal to all powers
$\{t^k\}_{k\ge0}$, i.e., $p_-:= p\,a\in H^1_-$.

A positive definiteness of the minimal operator $A$ reads as follows
\begin{equation}\label{semibound}
(Aq,q)=\sum_{k,j=0}^N a_{k-j} q_j\bar q_k=\int_{\T}
a(t)|q(t)|^2\,m(dt)\ge \varepsilon \|q\|^2_{L^2(\T)}, \qquad
q(t):=\sum_{j=0}^N q_jt^j, \quad \varepsilon>0,
\end{equation}
for an arbitrary analytic polynomial $q$, $m$ is the normalized Lebesgue measure on $\T$.
It is clear from \eqref{semibound} that $a(t) \ge \varepsilon$  for a.e.  $t = e^{i\varphi}\in \T$.
Therefore (see \cite[Theorem II.4.6]{Garn}) there is an outer function $D$ such that
$$
a(t) =|D(t)|^2, \qquad D\in H^2, \quad D^{-1}\in H^\infty.
$$
We have $p(t)\,a(t)=p(t)\,|D(t)|^2 = p_-(t)\in H^1_-$ and hence
$$
p(t)\,D(t) = \frac{p_-(t)}{\overline {D(t)}}\,.
$$
But the left-hand side of the latter equality belongs to $H^1$,
whereas the right-hand side lies in $H^1_-$ which yields $p\equiv
0$, as claimed. The proof is complete.
\end{proof}

A sequence $X=\{x_k\}_{k\in\N}\subset\R^n$ is called a {\it Toeplitz
sequence}, if $|x_i-x_j|=|i-j|$ for $i,j\in\N$. The latter is
equivalent (recall that by our convention $x_1=0$) to
$x_k=(k-1)\xi$, $\xi\in\R^n$, and $|\xi|=1$, so $d=\dim\kL(X)=1$. In
this case $S_X(f)$ is a Toeplitz operator, which will be called a {\it Schoenberg--Toeplitz operator}.
The Toeplitz symbol $a$ \eqref{2.48} takes now the form
  \begin{equation}\label{2.51}
a(f,e^{i\varphi}):=\sum_{k\in \Z} f(|k|)e^{ik\varphi}.
   \end{equation}


\begin{remark}
(i) Self-adjointness of not necessarily positive Toeplitz operators
with the Toeplitz symbol from $BMO(\T)$ was established by V. Peller
\cite{Pel89}. This is the case for the Hilbert--Toeplitz operator
\eqref{hilb-toep} with the Toeplitz symbol
$$ a(h,t)=1-2\Re\,\frac{\log(1-t)}{t}\in BMO(\T), $$
but not for general Schoenberg--Toeplitz operators with Toeplitz
symbols \eqref{symbol}.

(ii) Semi-bounded Toeplitz  operators have been studied in
several papers (see  \cite{RoRo} and references therein). For
instance, it is proved in \cite{Ros60} that
the Friedrichs extension $A_F$ of $A$ has absolutely continuous
spectrum. However, according to Lemma \ref{Toeplitz_Lemma}, $A_F=A$.
\end{remark}

In the rest of the section we will focus on the Schoenberg--Toeplitz operators $S_X(f)$ with
symbols $f\in \Phi_\infty = \Phi_\infty(2)$. We clarify and complete Corollary \ref{alphaclass} for
such operators and describe their spectra in terms of the Schoenberg measures $\sigma=\sigma_f$.

      \begin{proposition}\label{toep1}
Let $f\in\Phi_\infty$ and let  $\sigma$ be its Schoenberg measure
\eqref{alphacm}. The Schoenberg--Toeplitz matrix $\mathcal
S_X(f)$ defines a minimal operator $S_X(f)$ in $\ell^2$ if and only
if $f\in L^2(\R_+)$. In this case $S_X(f)$ is self-adjoint, its
spectrum is purely absolutely continuous and fills in the interval
\begin{eqnarray}\label{spectrschtoepl}
\sigma(S_X(f)) = \sigma_{ac}(S_X(f))=[c_-, c_+], \qquad \qquad \qquad \nonumber  \\
0<c_-:=\int_0^\infty \vartheta_3\bigl(\pi,
e^{-s}\bigr)\,\sigma(ds)<c_+:=\int_0^\infty \vartheta_3\bigl(0,
e^{-s}\bigr)\,\sigma(ds) \le +\infty,
  \end{eqnarray}
where $\vartheta_3$ is the Jacobi theta-function.

Moreover, the operator $S_X(f)$ is bounded if and only if $f\in
L^1(\R_+)$, or, equivalently,
  \begin{equation}\label{negatmom1}
\int_0^\infty \frac{\sigma(ds)}{\sqrt{s}}<\infty.
  \end{equation}
  \end{proposition}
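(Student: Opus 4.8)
The plan is to identify $S_X(f)$ with a self-adjoint Toeplitz operator, compute its symbol as an integral of Jacobi theta functions, and then read off all assertions from the three tools already in place: Theorem~\ref{semibound_Schoenb}, Lemma~\ref{Toeplitz_Lemma} and Theorem~\ref{bochbound}. Since $X$ is a Toeplitz sequence, $\kS_X(f)=\|f(|i-j|)\|$ is a Hermitian Toeplitz matrix with entries $a_m=f(|m|)$, and all its columns are shifts of one another; hence condition \eqref{unboundher} holds for one index iff it holds for all, and it reads $\sum_m f^2(|m|)<\infty$. By Lemma~\ref{techlemma} with $d=1$ and $h=f^2$ this is equivalent to $\int_0^\infty f^2(t)\,dt<\infty$, i.e. $f\in L^2(\R_+)$, which settles the first equivalence. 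To compute the symbol \eqref{2.51} I would substitute the Schoenberg representation $f(k)=\int_0^\infty e^{-sk^2}\sigma(ds)$: the function $g(\varphi):=\int_0^\infty\vartheta_3(\varphi,e^{-s})\,\sigma(ds)$, with $\vartheta_3(\varphi,q)=\sum_{k\in\Z}q^{k^2}e^{ik\varphi}$, lies in $L^1(\T)$ (indeed $\int_\T g\,m(dt)=1$ by Tonelli), and Fubini applied to the nonnegative integrand gives its $k$-th Fourier coefficient as $\int_0^\infty e^{-sk^2}\sigma(ds)=f(|k|)$; hence $g$ coincides with the Toeplitz symbol $a(f,\cdot)$.

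For self-adjointness I would combine two facts. Because $f\in\Phi_\infty=\Phi_\infty(2)$ and $X\in\kX_n$ is separated, Theorem~\ref{semibound_Schoenb} shows the minimal operator $S_X(f)$ is symmetric and positive definite, in particular semibounded from below. Since $\kS_X(f)$ is a Hermitian Toeplitz matrix satisfying \eqref{2.47} (again just $f\in L^2(\R_+)$), Lemma~\ref{Toeplitz_Lemma} yields $S_X(f)=S_X(f)^*$.

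The spectral statement is the core of the argument, and I would start from the geometry of the symbol. By the Jacobi triple product $\vartheta_3(\varphi,q)=\prod_{m\ge1}(1-q^{2m})(1+2q^{2m-1}\cos\varphi+q^{4m-2})$, each factor of which is positive and decreasing in $\varphi$ on $[0,\pi]$; thus for $0<q<1$ the map $\varphi\mapsto\vartheta_3(\varphi,q)$ is positive and strictly decreasing on $[0,\pi]$, with maximum $\vartheta_3(0,q)$ and minimum $\vartheta_3(\pi,q)=\prod_{m\ge1}(1-q^{2m})(1-q^{2m-1})^2>0$. As $f\in L^2$ forces $\sigma(\{0\})=0$, integrating against the probability measure $\sigma$ shows that $a(f,\cdot)=g$ is real, even, and on $[0,\pi]$ strictly decreasing and continuous where finite, bounded below by $c_-=\int_0^\infty\vartheta_3(\pi,e^{-s})\sigma(ds)>0$, with essential range $[c_-,c_+]$, $c_+=\int_0^\infty\vartheta_3(0,e^{-s})\sigma(ds)\le\infty$, and $c_-<c_+$ because $\sigma$ is not concentrated at $0$. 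The quadratic-form identity \eqref{semibound}, $(S_X(f)q,q)=\int_\T a(f,t)|q(t)|^2\,m(dt)\ge c_-\|q\|^2$, gives $\min\sigma(S_X(f))\ge c_-$, and in the bounded case the same identity with the upper bound $c_+$ yields $\sigma(S_X(f))\subseteq[c_-,c_+]$. For the reverse inclusion I would, for each finite $\lambda\in(c_-,c_+)$, solve $a(f,e^{i\varphi_0})=\lambda$ with $\varphi_0\in(0,\pi)$ and take the normalized analytic polynomials $q_N(t)=N^{-1/2}\sum_{j=0}^{N-1}e^{-ij\varphi_0}t^j$ as a Weyl sequence; these concentrate in frequency near $\varphi_0$, where $a$ is bounded, so $(S_X(f)-\lambda)q_N\to0$ and therefore $[c_-,c_+]\subseteq\sigma(S_X(f))$.

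The main obstacle is the pure absolute continuity of the spectrum, which is the one genuinely nontrivial analytic input; here I would invoke the spectral theory of self-adjoint Toeplitz operators. In the bounded case $f\in L^1(\R_+)$ the symbol $a(f,\cdot)$ is a non-constant real function in $L^\infty(\T)$ and Rosenblum's theorem gives purely absolutely continuous spectrum equal to the essential range $[c_-,c_+]$; in the unbounded case $c_+=+\infty$ I would instead use the result of \cite{Ros60} that the Friedrichs extension of a semibounded Toeplitz operator is absolutely continuous, noting that by Lemma~\ref{Toeplitz_Lemma} the minimal operator already coincides with its Friedrichs extension. Finally, the boundedness criterion comes from Theorem~\ref{bochbound}: the Toeplitz sequence $X$ is $1$-regular with $d=1$, so $S_X(f)$ is bounded iff $f\in L^1(\R_+)$; and the computation \eqref{2.18C} with $\alpha=2$, $d=1$ evaluates $\int_0^\infty f(t)\,dt=\tfrac{\sqrt\pi}{2}\int_0^\infty s^{-1/2}\,\sigma(ds)$, which is precisely the equivalence with \eqref{negatmom1}.
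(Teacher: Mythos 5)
Your proof is correct and follows the same skeleton as the paper's: the column criterion reduced to $f\in L^2(\R_+)$ via monotonicity, self-adjointness from Theorem \ref{semibound_Schoenb} combined with Lemma \ref{Toeplitz_Lemma}, the symbol computation $a(f,\cdot)=\int_0^\infty\vartheta_3\,\sigma(ds)$ by Fubini, absolute continuity from Rosenblum \cite{Ros60} together with the identification of the minimal operator with its Friedrichs extension, and the boundedness criterion from Theorem \ref{bochbound} plus the moment computation \eqref{2.18C} with $\alpha=2$, $d=1$. The one place you genuinely depart from the paper is the identification $\sigma(S_X(f))=[c_-,c_+]$: the paper gets monotonicity of the symbol from the logarithmic derivative of $\vartheta_3$ and then simply cites the Hartman--Wintner theorem \cite[Theorem 4.2.7]{Nik1} to equate the spectrum with the range of $a(f)$, whereas you derive monotonicity from the Jacobi triple product and prove the inclusion $[c_-,c_+]\subseteq\sigma(S_X(f))$ by hand with Fej\'er-type Weyl sequences $q_N$, using $\|(S_X(f)-\lambda)q_N\|\le\|(a(f)-\lambda)q_N\|_{L^2(\T)}\to0$, an estimate that only needs $a(f)\in L^2(\T)$ and local boundedness of $a(f)$ near the point where it takes the value $\lambda$. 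This buys a self-contained argument that applies verbatim when $c_+=+\infty$; the Hartman--Wintner theorem is stated for bounded symbols, so the paper's appeal to it in the unbounded case is a small leap that your route avoids. A further cosmetic point in your favor: your convention $\vartheta_3(\varphi,q)=\sum_{k\in\Z}q^{k^2}e^{ik\varphi}$ makes the stated formulas for $c_\pm$ literally exact, while the paper's proof works with $\vartheta_3(\varphi/2,e^{-s})$ and thus reads $c_-$ as $\vartheta_3(\pi/2,\cdot)$ integrated against $\sigma$.
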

   \begin{proof}
As the Schoenberg symbol $f$ is a nonnegative and monotone
decreasing function, conditions $f\in L^2(\R_+)$  and $\{f(k)\}_{k\ge0}\in\ell^2$ are equivalent, so \eqref{2.47} is met.
Next, for $f\in\Phi_\infty$ the corresponding minimal operator is symmetric and strongly positive definite
by Theorem \ref{semibound_Schoenb}. Hence $S_X(f)$ is self-adjoint in view of Lemma \ref{Toeplitz_Lemma}.

Consider the kernel function $e_s(u):=e^{-su^2}$, $s>0$, so
$S_X(e_s) = \|e^{-s|i-j|^2}\|_{i,j\in\N}$. Since $e_s(\cdot)\in L^1(\R_+)$, the operator $S_X(e_s)$ is bounded
by Theorem \ref{bochbound}.  The corresponding Toeplitz symbol is given by \eqref{2.51}.
It can now be expressed by means of the Jacobi theta-function
   \begin{equation*}
a(e_s,e^{i\varphi}) =\sum_{k\in\,\Z}
e^{-s|k|^2}e^{ik\varphi}=\vartheta_3\Bigl(\frac{\varphi}2,
e^{-s}\Bigr).
   \end{equation*}
It is well known (see \cite[Chapter 21]{WhWa}) that $\vartheta_3$ is
positive on the real line and
$$ \frac{\vartheta_3'(\varphi)}{\vartheta_3(\varphi)}=-4\sin 2\varphi\,\sum_{k=1}^\infty \frac{q^{2k-1}}{1+2q^{2k-1}\cos 2\varphi+q^{4k-2}}\,,
\qquad q=e^{-s}, $$ so $a(e_s)$ is monotone decreasing on $[0,\pi]$
($a(e_s)$ is ``bell-shaped'' on $[-\pi,\pi]$).
By the Hartman--Wintner theorem (see, e.g., \cite[Theorem 4.2.7]{Nik1}) its spectrum agrees with
the range of $a(f)$, so it is the interval
$$ \sigma(S_X(e_s))=a(e_s,\T)=[a(e_s,-1), a(e_s,1)]=[\vartheta_3\Bigl(\frac{\pi}2,q\Bigr), \vartheta_3(0,q)]. $$

For a general function $f\in\Phi_\infty$ the Toeplitz symbol $a(f)$
of $S_X(f)=\|f(|i-j|)\|_{i,j\in\N}$ can be computed as
\begin{equation}\label{symbol}
a(f,e^{i\varphi})=\sum_{k\in\,\Z}
f(|k|)e^{ik\varphi}=\sum_{k\in\,\Z} e^{ik\varphi}\int_0^\infty
e^{-s|k|^2}\,\sigma(ds)= \int_0^\infty
\vartheta_3\Bigl(\frac{\varphi}2, e^{-s}\Bigr)\,\sigma(ds), \qquad
\varphi\not=0. \end{equation}
It is easily seen that
   \begin{equation}\label{symbol-Inequality}
 \vartheta_3\Bigl(\frac{\pi}2,e^{-s}\Bigr)\le
\vartheta_3\Bigl(\frac{\varphi}2,e^{-s}\Bigr)\le
\vartheta_3(0,e^{-s}) =\sum_{k\in\Z} e^{-sk^2}\sim \frac1{\sqrt{s}},
\quad s\to +0, \qquad 0\le\varphi\le\pi.
  \end{equation}
Again by the Hartman--Wintner theorem, the
spectrum of $S_X(f)$ agrees with the range of $a(f)$, which is exactly
the interval given  by \eqref{spectrschtoepl}. Its absolute continuity is a standard fact
in the theory of Toeplitz operators, (see, e.g., \cite[p. 64]{RoRo}).

By Theorem \ref{bochbound} the boundedness of $S_X(f)$ is equivalent to  $f\in
L^1(\R_+)$. In turn, the latter is equivalent to \eqref{negatmom1} by Corollary
\ref{alphaclass}, applied with $\alpha =2$ and $d=1$. The proof is complete.
   \end{proof}

It is easy to express the inclusion $f\in \Phi_\infty\cap L^2(\R_+)$ in terms of
$\sigma$ (cf. \eqref{negatmom1})
   \begin{equation}\label{unboundtoepl1}
\int_{\R_+^2}
\frac{\sigma(ds_1)\,\sigma(ds_2)}{\sqrt{s_1+s_2}}<\infty.
   \end{equation}


Next, we provide a similar result for $f\in CM_0(\R_+)$.
  \begin{proposition}\label{toep2}
Let $f\in CM_0(\R_+)$, $\tau$ be its Bernstein measure
\eqref{bernstein}.  The Schoenberg--Toeplitz matrix $\mathcal
S_X(f)$ defines a minimal operator $S_X(f)$ in $\ell^2$ if and only
if $f\in L^2(\R_+)$. In this case $S_X(f)$ is self-adjoint, its
spectrum is purely absolutely continuous and fills in the interval
   \begin{equation}
\sigma(S_X(f)) = \sigma_{ac}(S_X(f)) = [c_-, c_+], \qquad
0<c_{\pm}=\int_0^\infty \frac{1\pm e^{-s}}{1\mp e^{-s}}\,\tau(ds).
  \end{equation}
Moreover, the  operator $S_X(f)$ is bounded if and only if $f\in
L^1(\R_+)$, or, equivalently,
  \begin{equation}\label{negatmom}
\int_0^\infty \frac{\tau(ds)}{s}<\infty.
   \end{equation}
\end{proposition}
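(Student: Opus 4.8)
The plan is to follow verbatim the architecture of the proof of Proposition \ref{toep1}, replacing the Jacobi theta-function by the Poisson kernel. First I would record that $f\in CM_0(\R_+)=\Phi_\infty(1)\subset\Phi_\infty$ lies in $\kM_+$, so that it is nonnegative and monotone decreasing. Hence $\{f(|k|)\}_{k\ge0}\in\ell^2$ is equivalent to $f\in L^2(\R_+)$ (apply Lemma \ref{techlemma} with $d=1$ and $h=f^2$), and this is precisely condition \eqref{2.47} for the Toeplitz matrix; this settles the ``if and only if'' for the minimal operator to be densely defined. For self-adjointness I would invoke Theorem \ref{semibound_Schoenb} with $\alpha=1$, which gives that the minimal Schoenberg--Toeplitz operator is symmetric and strongly positive definite; Lemma \ref{Toeplitz_Lemma} then yields $S_X(f)=S_X(f)^*$.

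The analytic core is the computation of the Toeplitz symbol \eqref{2.51}. Writing $f(|k|)=\int_0^\infty e^{-s|k|}\,\tau(ds)$ and interchanging summation and integration (legitimate by Tonelli, all terms being nonnegative), I would obtain
\begin{equation*}
a(f,e^{i\varphi})=\int_0^\infty\Bigl(\sum_{k\in\Z}e^{-s|k|}e^{ik\varphi}\Bigr)\tau(ds)
=\int_0^\infty\frac{1-e^{-2s}}{1-2e^{-s}\cos\varphi+e^{-2s}}\,\tau(ds),
\end{equation*}
since the inner sum is the classical Poisson kernel $P_q(\varphi)=\frac{1-q^2}{1-2q\cos\varphi+q^2}$ with $q=e^{-s}$. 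For each fixed $s$ the Poisson kernel is positive, even, and strictly decreasing in $\varphi$ on $[0,\pi]$, so the same holds for $a(f,\cdot)$; its extreme values are attained at $\varphi=\pi$ and $\varphi=0$, and a direct simplification gives $P_q(\pi)=\frac{1-q}{1+q}$ and $P_q(0)=\frac{1+q}{1-q}$, i.e. $a(f,-1)=c_-$ and $a(f,1)=c_+$ with precisely the stated integrals.

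I would then finish exactly as in Proposition \ref{toep1}: by the Hartman--Wintner theorem the spectrum of the self-adjoint Toeplitz operator coincides with the range of its symbol, which by the monotonicity just established is the interval $[c_-,c_+]$, and absolute continuity of the spectrum of a Toeplitz operator with real symbol is standard. For the boundedness criterion, Theorem \ref{bochbound} (with $d=1$) already gives that $S_X(f)$ is bounded if and only if $f\in L^1(\R_+)$; noting that the measure $\sigma$ of \eqref{alphacm} with $\alpha=1$ coincides with the Bernstein measure $\tau$, Corollary \ref{alphaclass} applied with $\alpha=1$, $d=1$ translates this into $\int_0^\infty s^{-1}\tau(ds)<\infty$, which is \eqref{negatmom}.

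The only genuinely delicate point is the behaviour of $c_+$. When $\int_0^\infty s^{-1}\tau(ds)=\infty$ the symbol $a(f,\cdot)$ is unbounded, blowing up like $P_q(0)=\frac{1+e^{-s}}{1-e^{-s}}\sim 2/s$ as $s\to0$ and matching $c_+=+\infty$; here I must ensure that the ``range $=$ spectrum'' identity and the absolute continuity remain valid in this unbounded self-adjoint situation, for which I expect to quote the same references as in Proposition \ref{toep1}. Everything else is a routine transcription of the theta-function argument with the Poisson kernel in its place.
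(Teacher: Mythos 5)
Your proposal is correct and follows essentially the same route as the paper: the paper's proof of Proposition \ref{toep2} consists precisely of computing the Toeplitz symbol of $e^{-s|k|}$ as the Poisson kernel, obtaining $a(f,e^{i\varphi})=\int_0^\infty P(e^{-s},e^{i\varphi})\,\tau(ds)$ by interchanging sum and integral, and then repeating the argument of Proposition \ref{toep1} verbatim (self-adjointness via Theorem \ref{semibound_Schoenb} and Lemma \ref{Toeplitz_Lemma}, spectrum via monotonicity of the symbol and the Hartman--Wintner theorem, boundedness via Theorem \ref{bochbound} and Corollary \ref{alphaclass} with $\alpha=1$, $d=1$). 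Your treatment of the endpoints $P_q(\pi)=\frac{1-q}{1+q}$, $P_q(0)=\frac{1+q}{1-q}$ and the identification $\sigma=\tau$ for $\alpha=1$ are exactly the details the paper leaves implicit.
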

  \begin{proof}
As in the proof of the preceding result, we start  with the kernel
function $e_s(u):=e^{-su}$, $s>0$ and relate the Schoenberg and
Toeplitz symbols:
   \begin{equation*}
a(e_s,e^{i\varphi}) =\sum_{k\in\,\Z}
e^{-s|k|}e^{ik\varphi}=1+\frac{e^{-s+i\varphi}}{1-e^{-s+i\varphi}}+\frac{e^{-s-i\varphi}}{1-e^{-s-i\varphi}}
= \frac{1-e^{-2s}}{|1-te^{-s+i\varphi}|^2}= P(e^{-s},e^{i\varphi}),
  \end{equation*}
where $P(e^{-s},e^{i\varphi})$ denotes  the Poisson kernel for the unit disk.
Hence $S_X(e_s) = \|e^{-s|i-j|}\|_{i,j\in\N}$ is bounded and its
spectrum is the interval
$$
\sigma(S_X(e_s))=a(e_s,\T)=\left[\frac{1-e^{-s}}{1+e^{-s}}, \, \frac{1+e^{-s}}{1-e^{-s}}\right].
$$
The Toeplitz symbol $a(f)$ of the operator
$S_X(f)=\|f(|i-j|)\|_{i,j\in\N}$ can be computed as above
\begin{equation}\label{symbol}
a(f,e^{i\varphi})=\sum_{k\in\,\Z} f(|k|)e^{ik\varphi}=\sum_{k\in\,\Z} e^{ik\varphi}\int_0^\infty
e^{-s|k|}\,\tau(ds)= \int_0^\infty P(e^{-s},e^{i\varphi})\,\tau(ds), \qquad
\varphi\not=0.
\end{equation}

One completes the proof in just the same fashion as in Proposition \ref{toep1}.
     \end{proof}

Similarly, the condition $f\in CM_0(\R_+)\cap L^2(\R_+)$ is equivalent to (cf. \eqref{negatmom})
\begin{equation}\label{unboundtoepl}
\int_{\R_+^2} \frac{\tau(ds_1)\,\tau(ds_2)}{s_1+s_2}<\infty.
\end{equation}

       \begin{example}\label{nonell2}
It is not hard to manufacture a Schoenberg--Toeplitz matrices with the Schoenberg symbol
$f\in CM_0(\R_+)\backslash L^2(\R_+)$. Indeed, one can take
\begin{equation}
 \kS_X(f_\beta) =\|(1+|i-j|)^{-\beta}\|_{i,j\in\N}, \qquad
f_\beta(r)=\frac1{(1+r)^\beta}= \frac1{\Gamma(\beta)}\int_0^\infty
e^{-sr}\,s^{\beta-1}e^{-s}\,ds
\end{equation}
with $0<\beta\le1/2$. In this example no coordinate vector $e_j$,
$j\in \N$, belongs to $\ell^2$.
\end{example}
   \begin{remark}
(i). According to a result of Brown and Halmos (see, e.g., \cite[Theorem
4.1.4]{Nik1}) the operator $S_X(f)$ is bounded if and
only if $a(f)\in L^\infty(\T)$. Due to the asymptotic relation
\eqref{symbol-Inequality} for $f\in \Phi_\infty$ the latter is
equivalent to \eqref{negatmom1}. This observation provides another
proof of the last statement of both preceding propositions.

(ii). The relation between the Schoenberg symbol $f\in\Phi_{\infty}(\alpha)$ for $\alpha=1,2$
and the Toeplitz symbol $a(f)$ is implemented by the Poisson kernel and the Jacobi theta-function, respectively.
We are unaware of the similar relation for $1<\alpha<2$.

(iii). A Schoenberg--Toeplitz operator $S_X(f)$ with $f\in\kM_+$ is bounded if and only if
the Fourier coefficients of its Toeplitz symbol $a(f)$ \eqref{2.51} are positive and monotone
decreasing and $a(f)\in W$, the Wiener algebra of absolutely convergent Fourier series.
This result stems directly from Theorem \ref{bochbound}.
   \end{remark}

\begin{example}\label{uninvertible}
We construct a bounded Schoenberg--Toeplitz operator $S_X(\varphi)$ with $0\in\sigma(S_X(\varphi))$.
Take any Toeplitz sequence $X\subset\R^1$ so that $|x_i-x_j|=|i-j|$  and put
\begin{equation*}
\varphi(t)=\biggl(1-\frac{t}2\biggr)_+\in\Phi_1, \qquad
(a)_+:=\max(a,0).
\end{equation*}
Then $S_X(\varphi)=J(\{1/2\}, \{1\})$ is the Jacobi operator with $1$ on the main diagonal and $1/2$ off the main diagonal.
It is well known that $\sigma(S_X(\varphi))=[0,2]$, as claimed. Certainly, $\varphi\notin\Phi_\infty$.
\end{example}

\section{Schoenberg matrices and harmonic analysis on $\R^n$}\label{classes}

\subsection{Radial strongly  $X$-positive definite  functions}\label{rpdfstrong}

We begin with some basics of harmonic analysis on the Hilbert spaces (\cite[Section C.3.3]{Nik2}, \cite{young80}).

    \begin{definition}\label{def4.1}
Let $\kF=\{f_k\}_{k\in\N}$ be a sequence of vectors in a Hilbert
space $\kH$.
\begin{itemize}
\item[(i)] $\kF$ is called a {\it Riesz--Fischer sequence}
if for all $(\xi_1,\cdots,\xi_m)\in \C^m$ and $m\in\N$ there is a constant $c>0$ such that
   \begin{align}\label{rieszfischerse}
\bigg\|\sum_{k=1}^m \xi_k f_k\bigg\|^2_\kH \geq  c~ \sum_{k=1}^m |\xi_k|^2.
  \end{align}

\item[(ii)] $\kF$ is called a {\it Bessel sequence}
if for all $(\xi_1,\cdots,\xi_m)\in \C^m$ and $m\in\N$ there is a constant $C<\infty$ such that
   \begin{align}\label{besselse}
\bigg\|\sum_{k=1}^m \xi_k f_k\bigg\|^2_\kH  \leq C~ \sum_{k=1}^m|\xi_k|^2.
   \end{align}

\item[(iii)] $\kF$ is called a {\it Riesz sequence} (or a Riesz basis in its linear span) if $\kF$
is both Riesz--Fischer and Bessel sequence. If $\kF$ is complete we say about a Riesz basis in $\kH$.

\if{\item[(iv)]  $\kF$ is called a {\it Bari sequence}  (or a Bari basis in its linear span)
if there exists an orthonormal sequence $\{e_j\}_{j\in\N}$ of $\kH$ such that
    \begin{equation}
\sum_{j\in \N}\|f_j-e_j\|^2<\infty.
    \end{equation}
}\fi
\end{itemize}
    \end{definition}

It turns out that the above notions applied to sequences of exponential functions in $L^2$-spaces are tightly
related to the strong $X$-positive definiteness.

Given an arbitrary sequence $X=\{x_k\}_{k\in\N}$ of distinct points in $\R^n$, we introduce a system
\begin{equation}\label{expfun}
 \kE_X =\{e(\cdot,x_k)\}_{k\in\N}, \qquad e(x,x_k)=e^{i(x,x_k)}, \quad x\in\R^n, \end{equation}
of exponential functions.

\begin{proposition}\label{prop_Riesz_seq_in_space}
Let $g$ be a positive definite function $(\ref{bochequa})$ with the Bochner measure $\mu$. For an arbitrary sequence
$X=\{x_k\}_{k\in\N}$ of distinct points in $\R^n$ and for the system of exponential functions $\kE_X$ $\eqref{expfun}$ the
following holds.

\begin{itemize}
\item[\em (i)]  $\kE_X$ is a Riesz--Fischer sequence in $L^2(\R^n,\mu)$ if and only if  $g$ is strongly $X$-positive definite.

\item[\em (ii)] $\kE_X$ is a Bessel sequence if and only if the Gram matrix
\begin{equation}\label{grammatrix}
Gr (\kE_X,L^2(\R^n,\mu)) = \|\langle e(\cdot,x_k),
e(\cdot,x_j)\rangle_{L^2(\R^n,\mu)}\|_{k,j\in \N} =
\|g(x_k-x_j)\|_{k,j\in \N}
 \end{equation}
defines a bounded, self-adjoint and nonnegative operator on $\ell^2$.

\item[\em (iii)] $\kE_X$ is a Riesz sequence if and only if $Gr (\kE_X,L^2(\R^n,\mu))$ defines a bounded
and invertible, nonnegative operator.
   \end{itemize}
      \end{proposition}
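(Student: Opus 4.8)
The plan is to reduce all three equivalences to a single computation that identifies the Gram matrix with the quadratic form of finite linear combinations of the exponentials $e(\cdot,x_k)$. First I would record the \emph{fundamental identity}: for any finitely supported $\xi=(\xi_1,\dots,\xi_m)\in\C^m$, Bochner's representation \eqref{bochequa} gives
\begin{equation*}
\Bigl\|\sum_{k=1}^m \xi_k\,e(\cdot,x_k)\Bigr\|^2_{L^2(\R^n,\mu)}
=\int_{\R^n}\Bigl|\sum_{k=1}^m \xi_k\,e^{i(x,x_k)}\Bigr|^2\mu(dx)
=\sum_{k,j=1}^m g(x_k-x_j)\,\xi_k\overline{\xi_j}.
\end{equation*}
Since the left-hand side is real and the matrix $\kB:=\|g(x_k-x_j)\|$ is Hermitian, the right-hand side is exactly the quadratic form $(\kB\xi,\xi)_{\ell^2}$ figuring in \eqref{stronglyxpositive} and associated with \eqref{grammatrix}. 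This single identity carries the whole proof.

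Part (i) is then immediate: the Riesz--Fischer inequality \eqref{rieszfischerse} for $\kE_X$ asserts that the left-hand side is $\ge c\sum_k|\xi_k|^2$ with a uniform $c>0$, and by the identity this is precisely $(\kB\xi,\xi)\ge c\sum_k|\xi_k|^2$ for all finite $\xi$, i.e. the strong $X$-positive definiteness of $g$ (Definition~\ref{def-X-posit_definit} read for the general positive definite function $g$). The two conditions are literally the same inequality.

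For part (ii) I would pass through the synthesis operator $T\xi:=\sum_k\xi_k\,e(\cdot,x_k)$, initially defined on finitely supported sequences. The Bessel bound \eqref{besselse} reads $\|T\xi\|^2\le C\|\xi\|^2$ uniformly, which says exactly that $T$ extends to a bounded operator $T\colon\ell^2\to L^2(\R^n,\mu)$; its Gram operator $G:=T^*T$ is then bounded, self-adjoint and nonnegative, and a short check shows that its matrix is $Gr(\kE_X,L^2(\R^n,\mu))=\kB$. Conversely, if $\kB$ defines a bounded operator $G$ on $\ell^2$, then by the identity $\|T\xi\|^2=(G\xi,\xi)\le\|G\|\,\|\xi\|^2$, which is the Bessel property; self-adjointness and nonnegativity of $G$ come for free from the Hermiticity and (Bochner) nonnegativity of $\kB$. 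The one point needing care --- the \textbf{main obstacle} --- is the passage from the uniform bound on the finite sections to genuine boundedness of the infinite matrix $\kB$. This is handled cleanly by the nonnegativity of the form: Cauchy--Schwarz for nonnegative sesquilinear forms gives $|(\kB\xi,\eta)|^2\le(\kB\xi,\xi)(\kB\eta,\eta)\le C^2\|\xi\|^2\|\eta\|^2$, so the form extends boundedly to $\ell^2\times\ell^2$ and is represented by a bounded operator; alternatively the synthesis-operator viewpoint avoids the issue altogether.

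Finally, part (iii) combines the first two. By Definition~\ref{def4.1}(iii) a Riesz sequence is one that is simultaneously Riesz--Fischer and Bessel, so by (i) and (ii) the system $\kE_X$ is a Riesz sequence if and only if $\kB$ defines a bounded, self-adjoint, nonnegative operator $G$ that in addition satisfies $(G\xi,\xi)\ge c\|\xi\|^2$ with $c>0$, i.e. $G\ge cI$. For a bounded self-adjoint nonnegative operator, $G\ge cI$ with $c>0$ is equivalent to $0\notin\sigma(G)$, that is, to $G$ possessing a bounded inverse. This is exactly the assertion that $Gr(\kE_X,L^2(\R^n,\mu))$ defines a bounded and invertible nonnegative operator, which completes the proof.
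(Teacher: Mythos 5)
Your proof is correct and rests on exactly the same foundation as the paper's: the identity equating the quadratic form $\sum_{k,j}g(x_k-x_j)\xi_j\overline{\xi}_k$ with $\bigl\|\sum_k\xi_k e(\cdot,x_k)\bigr\|^2_{L^2(\R^n,\mu)}$, which is the paper's \eqref{equrfbs}. The paper simply declares all three statements "immediate" from this identity, whereas you spell out the details (the synthesis operator, the Cauchy--Schwarz extension argument for the nonnegative form, and the equivalence $G\ge cI\Leftrightarrow G$ invertible), so your write-up is a fuller version of the same proof.
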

   \begin{proof}
It is clear that
\begin{equation}\label{equrfbs}
\sum_{k,j=1}^{m}g(x_k-x_j)\xi_j\overline{\xi}_k=
\int_{\R^n}\left|\sum_{k=1}^{m}\xi_k e(u,x_k)\right|^2\mu(du)
 = \bigg\|\sum_{k=1}^{m}\xi_k e(\cdot,x_k)\bigg\|_{L^2(\R^n,\mu)}^2
   \end{equation}
for $\xi=\{\xi_1,\dots,\xi_m\}\in \C^m$ and arbitrary $m\in \N$.
All statements are immediate from  \eqref{equrfbs}.
     \end{proof}

The same system $\kE$ can be viewed as a system of vectors in another Hilbert space, namely $L^2(S_r^{n-1})$,
$S_r^{n-1}$ is a sphere in $\R^n$ of radius $r$, centered at the origin, with the normalized Lebesgue measure.
We denote this system by $\kE_X(S_r^{n-1})$. Such approach leads to RPDF's (see \cite{gmo2}).

The following result is borrowed  from \cite[Proposition 2.14]{MalSch12}.
We present it with the proof because of its importance in the sequel.

\begin{proposition}\label{rieszfisher}
Let $f\in\Phi_n$, $n\ge2$, with the measure $\nu=\nu(f)$ in $\eqref{schoenberg1}$.
Given an arbitrary sequence $X=\{x_k\}_{k\in\N}$ of distinct points in $\R^n$, the function $f$
is strongly $X$-positive definite if and only if there exists a Borel set  $\mathcal K \subset (0,+\infty)$, $\nu(\mathcal K)>0$
such that the system $\kE_X(S_r^{n-1})$ forms a Riesz--Fischer sequence for each $r\in \mathcal K$.
In particular, the function $f_\rho(\cdot)=\Omega_n(\rho\cdot)$, $\rho>0$, is strongly $X$-positive definite if and only if
the system $\kE_X(S_\rho^{n-1})$ is a Riesz--Fischer sequence.
   \end{proposition}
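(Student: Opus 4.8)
The plan is to reduce both implications to a single identity that disintegrates the Schoenberg quadratic form over the radial variable. Combining the Schoenberg representation \eqref{schoenberg1} with \eqref{fouriersphere} and the substitution $v=tu\in S_t^{n-1}$ gives $\Omega_n(t|x_k-x_j|)=\langle e(\cdot,x_k),e(\cdot,x_j)\rangle_{L^2(S_t^{n-1})}$, where $S_t^{n-1}$ carries the normalized surface measure. Since $\Omega_n$ is real-valued, summing against $\xi_j\overline{\xi_k}$ and applying Fubini yields the master identity
\begin{equation*}
\sum_{k,j=1}^m f(|x_k-x_j|)\,\xi_j\overline{\xi_k}=\int_0^\infty \Psi(t,\xi)\,\nu(dt),\qquad \Psi(t,\xi):=\Big\|\sum_{k=1}^m\xi_k\,e(\cdot,x_k)\Big\|_{L^2(S_t^{n-1})}^2 .
\end{equation*}
Thus strong $X$-positive definiteness of $f$ is the assertion $\int_0^\infty\Psi(t,\xi)\,\nu(dt)\ge c\|\xi\|^2$, while $\kE_X(S_r^{n-1})$ being a Riesz--Fischer sequence is exactly $\Psi(r,\xi)\ge c_r\|\xi\|^2$. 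I record the fibrewise bound $A(t):=\inf\{\Psi(t,\xi):\ \xi\ \text{finite},\ \|\xi\|=1\}=\inf_m\lambda_{\min}\big(\|\Omega_n(t|x_k-x_j|)\|_{k,j=1}^m\big)$, a decreasing limit of continuous functions, hence Borel measurable; then $\kE_X(S_r^{n-1})$ is Riesz--Fischer iff $A(r)>0$. By the strict $X$-positive definiteness of every $\Omega_n(t\cdot)\in\Phi_n$, $n\ge2$ (cf. \cite{sun93,GMZ11}), each truncation is positive definite for $t>0$, so $A(t)$ can vanish only through the limit $m\to\infty$.

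For the implication ``Riesz--Fischer on $\mathcal K\Rightarrow$ strongly $X$-positive definite'' I would argue directly. Given $\nu(\mathcal K)>0$ with $A(r)>0$ on $\mathcal K$, write $\mathcal K=\bigcup_n\{r\in\mathcal K:\ A(r)>1/n\}$; one piece $\mathcal K_0$ has $\nu(\mathcal K_0)>0$ and $A\ge 1/n_0$ there. Using $\Psi\ge0$ everywhere and $\Psi(t,\xi)\ge A(t)\|\xi\|^2$, restricting the master identity to $\mathcal K_0$ gives $\sum_{k,j}f(|x_k-x_j|)\xi_j\overline{\xi_k}\ge\big(\int_{\mathcal K_0}A(t)\,\nu(dt)\big)\|\xi\|^2\ge n_0^{-1}\nu(\mathcal K_0)\,\|\xi\|^2$, a bound uniform in $\xi$ and $m$. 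The ``in particular'' clause is the special case $\nu=\delta_\rho$, for which the master identity collapses to $\Psi(\rho,\xi)$ and the two properties literally coincide.

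The converse is the heart of the matter and where I expect the real difficulty. One must show that $A_f:=\inf_{\|\xi\|=1}\int_0^\infty\Psi(t,\xi)\,\nu(dt)>0$ forces $\nu(\{A>0\})>0$; equivalently, that $A(t)=0$ for $\nu$-a.e.\ $t$ precludes strong $X$-positive definiteness. The tempting soft estimate points the wrong way: concavity of $\lambda_{\min}$ gives only $A_f\ge\int A\,d\nu$, which is vacuous in this regime. A first genuine step applies Egorov's theorem to $\lambda_{\min}\big(\|\Omega_n(t|x_k-x_j|)\|_{k,j=1}^m\big)\downarrow 0$, producing a fixed dimension $M$ and a set of almost full $\nu$-measure on which the $M\times M$ fibre Gram matrix has a near-kernel; compactness of the unit sphere of $\C^M$ then extracts one direction $\xi$ that is near-null on a set of positive $\nu$-measure.

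The obstruction I anticipate is that this does not by itself defeat the \emph{rotation} of the fibre near-kernels with $t$: a single $\xi$ that makes $\Psi(\cdot,\xi)$ small on part of the annulus may be large elsewhere, so $\int\Psi(t,\xi)\,d\nu$ need not be small, and indeed a purely measure-theoretic family $\{G(t)\}$ with cycling null directions can keep $A_f$ bounded away from $0$ while every fibre degenerates. Overcoming this is exactly the point where the rigidity of the present family must enter: $p_\xi(v)=\sum_k\xi_k e^{i(v,x_k)}$ is one fixed entire function of exponential type, whose restrictions to the spheres $S_t^{n-1}$ cannot be prescribed independently, and the fibre Gram matrices $\|\Omega_n(t|x_k-x_j|)\|$ form a real-analytic family in $t$. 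My plan is to exploit this analyticity to align the near-null directions across nearby radii into a genuine approximate-null sequence for the full Gram operator $\|f(|x_k-x_j|)\|$, contradicting $A_f>0$. This analytic alignment step, borrowed from \cite{MalSch12}, is the crux of the argument.
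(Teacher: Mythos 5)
Your reduction to the master identity and your proof of the implication ``Riesz--Fischer on $\mathcal K$ $\Rightarrow$ strongly $X$-positive definite'' are correct and coincide with the paper's own argument: your identity is exactly \eqref{intfxkxjAA}, and the integration of the fibrewise lower bounds over a positive-measure piece of $\mathcal K$ is the same step the paper performs; your handling of measurability, via the upper semicontinuous function $A(t)=\inf_m\lambda_{\min}\bigl(\|\Omega_n(t|x_k-x_j|)\|_{k,j=1}^m\bigr)$ and the decomposition $\mathcal K=\bigcup_n\{A>1/n\}$, is in fact more careful than the paper's ``choosing $c(r)$ bounded and measurable''. The ``in particular'' clause as the case $\nu=\delta_\rho$ is also fine.

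The genuine gap is the converse implication: you do not prove it. What you give is a (correct) diagnosis that the abstract statement fails --- fibrewise degenerate families $\{G(t)\}$ with rotating near-kernels can integrate to a uniformly positive operator, so some structure of the exponentials must be used --- followed by a plan whose decisive step, the ``analytic alignment'' of near-null directions across radii, is never executed. There is also nothing to borrow from \cite{MalSch12} here: the proof reproduced in the present paper \emph{is} the proof from that reference, and it contains no such alignment argument. Moreover, the partial step you do describe cannot close the gap even in principle: Egorov plus compactness of the unit sphere of $\C^M$ yields one direction $\xi_0$ with $\Psi(\cdot,\xi_0)$ small on a set of \emph{positive} (not almost full) $\nu$-measure, and this is perfectly compatible with $\int_0^\infty\Psi(t,\xi_0)\,\nu(dt)\ge c\,\|\xi_0\|^2$, as you yourself concede; so your text stops exactly where a proof of the converse would have to begin.

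For comparison, the paper dispatches the converse by an elementary observation in the opposite spirit: for each fixed normalized $\xi$, the inequality $\int_0^\infty\Psi(t,\xi)\,\nu(dt)\ge c>0$ together with the fact that $\nu$ is a probability measure forces $\nu\{t:\ \Psi(t,\xi)\ge c\}>0$. Note that this produces a set that depends on $\xi$, so it does not explicitly address the uniformity-in-$\xi$ issue your rotation example isolates --- your instinct that this is the delicate point is reasonable. But whether one accepts the paper's short argument or agrees with you that more is needed, your proposal supplies no proof of the implication at all, and hence is incomplete as a proof of the stated equivalence.
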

\begin{proof} It follows from \eqref{schoenberg1} and \eqref{fouriersphere} that for
$(\xi_1,\dots,\xi_m)\in \C^m$ and  $m\in \N$
  \begin{equation}\label{intfxkxjAA}
\sum_{j,k =1}^{m}f(|x_k - x_j|)\xi_j\overline{\xi}_k=
 \int_{0}^{+\infty}\left(\;
 \int_{S^{n-1}_r}\left|\sum_{k=1}^{m}\xi_k e(u,x_k)\right|^2\sigma_n(du)
 \right)\nu(dr).
 \end{equation}

Suppose that there exists a set $\mathcal K$ as stated above.
Then for every $r \in \mathcal K$ there is a constant $c(r)>0$ so that
  \begin{equation}\label{2.18B}
\int_{S^{n-1}_r}\left|\sum_{k=1}^{m}\xi_k
e(u,x_k)\right|^2\sigma_n(du)= \bigg\|\sum_{k=1}^{m}\xi_k
e(\cdot,x_k)\bigg\|^2_{L^2(S_r^{n-1})} \ge  c(r) \sum_{k=1}^m
|\xi_k|^2.
     \end{equation}
Choosing $c(r)$ bounded and measurable and
combining the latter inequality with  \eqref{intfxkxjAA}, we obtain

  \begin{equation}\label{intfxkxjAB}
\begin{split}
\sum_{j,k =1}^{m}f(|x_j - x_k|)\xi_j\overline{\xi}_k &\ge
 \int_{\mathcal K} \left(\bigg\|\sum_{k=1}^{m}\xi_k e(\cdot,x_k)\bigg\|^2_{L_r^2(S^{n-1})}
 \right)\nu(dr)  \ge  c
\sum_{k=1}^m | \xi_k|^2, \\  c &:= \int_{\mathcal K}c(r)\nu(dr). \end{split}
 \end{equation}
Since $\nu(\mathcal K)>0$ and $c(r)>0$, we  have $c>0$, so $f$ is strongly $X$-positive definite.

Conversely, if
$$ \int_0^\infty h(r)\,\nu(dr)\ge c_1>0, \qquad h(r)=\bigg\|\sum_{k=1}^{m}\xi_k e(\cdot,x_k)\bigg\|^2_{L_r^2(S^{n-1})}, $$
then there is  a Borel set  $\mathcal K \subset (0,+\infty)$ of positive $\nu$-measure such that $h\ge c_1$ on $\mathcal K$,
as claimed.
\end{proof}

We want to lay stress on the fact that the measure $\nu$ enters this result only via existence of a certain Borel set $\kK$
of positive $\nu$-measure.

\begin{corollary}\label{mutac}
Let $f_j\in\Phi_n$, $n\ge2$, $j=1,2$, with the measures $\nu_1$ and $\nu_2$ in $\eqref{schoenberg1}$,
respectively. Assume that $\nu_1$ is absolutely continuous with respect to $\nu_2$. Given a set
$X=\{x_k\}_{k\in\N}$ of distinct points in $\R^n$, if $f_1$ is strongly $X$-positive definite then
so is $f_2$. In particular, if $\nu_1$ and $\nu_2$ are mutually  absolutely continuous $($equivalent$)$, then $f_1$ and $f_2$
are strongly $X$-positive definite simultaneously.
\end{corollary}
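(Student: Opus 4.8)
The plan is to reduce the whole statement to Proposition \ref{rieszfisher}, exploiting the point stressed in the remark right after it: the Riesz--Fischer property of the spherical exponential system $\kE_X(S_r^{n-1})$ is intrinsic to the radius $r$ and the configuration $X$, and does not involve the representing measure at all. A measure $\nu$ enters that criterion only through the requirement that the set of ``good'' radii carry positive $\nu$-mass. Hence two functions $f_1,f_2\in\Phi_n$ with different measures $\nu_1,\nu_2$ look at one and the same family of Riesz--Fischer radii, and differ solely in how their measures weigh it.

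Concretely, I would first apply Proposition \ref{rieszfisher} to $f_1$: since $f_1$ is strongly $X$-positive definite, there is a Borel set $\kK\subset(0,+\infty)$ with $\nu_1(\kK)>0$ such that $\kE_X(S_r^{n-1})$ is a Riesz--Fischer sequence for every $r\in\kK$. The crucial step is then the transfer of positivity from $\nu_1$ to $\nu_2$: because $\nu_1$ is absolutely continuous with respect to $\nu_2$, every $\nu_2$-null set is $\nu_1$-null, so by contraposition $\nu_1(\kK)>0$ forces $\nu_2(\kK)>0$. Now the very same set $\kK$ satisfies the hypotheses of the ``if'' direction of Proposition \ref{rieszfisher} for $f_2$ with its measure $\nu_2$, namely $\kE_X(S_r^{n-1})$ is Riesz--Fischer on $\kK$ (a condition independent of the measure) and $\nu_2(\kK)>0$; this yields that $f_2$ is strongly $X$-positive definite.

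The ``in particular'' clause follows by symmetry: if $\nu_1$ and $\nu_2$ are equivalent, then both $\nu_1\ll\nu_2$ and $\nu_2\ll\nu_1$ hold, so the implication just established runs in both directions, and $f_1,f_2$ are strongly $X$-positive definite simultaneously. I do not anticipate a serious obstacle. The only place demanding care is the \emph{direction} of absolute continuity: one must use $\nu_1\ll\nu_2$ exactly so that a set of positive $\nu_1$-measure has positive $\nu_2$-measure; reversing it would break the argument. Beyond that bookkeeping, the entire content is the measure-independence of the Riesz--Fischer radii, which Proposition \ref{rieszfisher} already supplies.
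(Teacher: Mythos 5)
Your proof is correct and follows exactly the paper's own argument: apply Proposition \ref{rieszfisher} to $f_1$ to obtain a set $\kK$ of Riesz--Fischer radii with $\nu_1(\kK)>0$, use $\nu_1\ll\nu_2$ to conclude $\nu_2(\kK)>0$, and then apply Proposition \ref{rieszfisher} in the backward direction to $f_2$. Your careful remark about the direction of absolute continuity is precisely the one delicate point, and you handle it correctly.
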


\begin{proof}
By Proposition \ref{rieszfisher} there is a Borel set $\mathcal K \subset (0,+\infty)$,
$\nu_1(\mathcal K)>0$ so that the system $\kE_r=\{e(\cdot,\,rx_k)\}_{k\in\N}$ forms a Riesz--Fischer sequence
in $L^2(S^{n-1})$ for each $r\in \mathcal K$. Since $\nu_1$ is absolutely continuous with respect to $\nu_2$, then $\nu_2(\kK)>0$ as well.
Now Proposition \ref{rieszfisher} applies in backward direction and yields strong $X$-positive definiteness of $f_2$, as claimed.
\end{proof}

\medskip


We are in a position now to prove the main result of the section.

\begin{theorem}[=Theorem \ref{Intro_stronglydef}]\label{propositionstronglydef}
Let $(\const \not = )f\in\Phi_n$, $n\ge2$, with the representing measure $\nu = \nu(f)$
from $\eqref{schoenberg1}$. If $\nu$ is equivalent to the Lebesgue measure on $\R_+$,  then $f$ is strongly
$X$-positive definite for each $X\in\kX_n$.
   \end{theorem}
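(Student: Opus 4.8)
The plan is to avoid estimating the Schoenberg matrix $\kS_X(f)$ directly and instead reduce, via Corollary \ref{mutac}, to a single cleverly chosen comparison function. Recall that Corollary \ref{mutac} asserts: if $g\in\Phi_n$ is strongly $X$-positive definite and its representing measure $\nu_g$ in \eqref{schoenberg1} satisfies $\nu_g\ll\nu$, then $f$ is strongly $X$-positive definite as well. Since $\nu$ is assumed equivalent to Lebesgue measure on $\R_+$, Lebesgue measure is in particular absolutely continuous with respect to $\nu$; hence $\nu_g\ll\nu$ follows by transitivity whenever $\nu_g$ is merely absolutely continuous with respect to Lebesgue measure. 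Thus it suffices to \emph{manufacture one radial positive definite function $g\in\Phi_n$ which is strongly $X$-positive definite and whose Schoenberg measure is absolutely continuous}.

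The construction I would use is an autocorrelation with a band-limited factor. Fix a nonnegative, even, smooth radial bump $\chi\not\equiv0$ supported in the ball $\{|\xi|<d_*(X)/2\}\subset\R^n$, let $\psi$ be its inverse Fourier transform (a real, radial Schwartz function), and set $P:=|\psi|^2\ge0$. Define
\begin{equation}
g(x):=\int_{\R^n}e^{i(u,x)}\,P(u)\,du,\qquad x\in\R^n.
\end{equation}
Being the Fourier transform of the nonnegative integrable function $P$, the function $g$ is positive definite by Bochner's Theorem \ref{boch}; it is continuous, real-valued and radial (because $P$ is radial), so $g\in\Phi_n$ after the harmless normalization $g(0)=1$. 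Passing to polar coordinates in the defining integral and invoking the spherical formula \eqref{fouriersphere} exhibits the representation \eqref{schoenberg1} for $g$ with $\nu_g(dt)=\omega_{n-1}\,t^{n-1}\,p(t)\,dt$, where $p$ is the radial profile of $P$ and $\omega_{n-1}$ the area of $S^{n-1}$; in particular $\nu_g$ is absolutely continuous with respect to Lebesgue measure.

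The decisive feature is that $g$ is \emph{compactly supported} inside $\{|x|<d_*(X)\}$: indeed $\widehat P$ is a positive multiple of $\chi*\chi$, hence supported in the ball of radius $d_*(X)$, and $g(\cdot)=\widehat P(-\,\cdot)$. Consequently $g(|x_k-x_j|)=0$ for all $k\neq j$, since $|x_k-x_j|\ge d_*(X)$ for the separated set $X$, whereas $g(0)=\int_{\R^n}P=\|\psi\|_2^2>0$. Therefore the Schoenberg matrix collapses to a scalar multiple of the identity, $\kS_X(g)=g(0)\,I$, so that $\sum_{k,j}g(|x_k-x_j|)\xi_j\overline{\xi_k}=g(0)\sum_k|\xi_k|^2$ and $g$ is trivially strongly $X$-positive definite with constant $g(0)>0$. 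Feeding this $g$ into Corollary \ref{mutac} yields the assertion for $f$.

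I do not expect a genuine analytic obstacle along this route, since all of the substantive work has already been absorbed into Proposition \ref{rieszfisher} and its Corollary \ref{mutac}. The only points demanding care are (a) checking that the autocorrelation construction simultaneously delivers \emph{compact support} (to annihilate the off-diagonal entries) and an \emph{absolutely continuous} Schoenberg measure, and (b) verifying the measure-theoretic chain $\nu_g\ll(\text{Lebesgue})\ll\nu$ that makes Corollary \ref{mutac} applicable. It is worth stressing that the hypothesis $n\ge2$ enters only through Proposition \ref{rieszfisher}, and that the assumed equivalence of $\nu$ with Lebesgue measure is used solely to secure that Lebesgue measure is absolutely continuous with respect to $\nu$.
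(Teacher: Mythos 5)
Your proof is correct, and while it shares the paper's overall skeleton --- both arguments reduce the theorem, via Corollary \ref{mutac}, to exhibiting a single strongly $X$-positive definite ``seed'' function whose Schoenberg measure is absolutely continuous with respect to $\nu$ --- your choice of seed and the way you verify its strong $X$-positivity are genuinely different. The paper takes $f_s(r)=e^{-sr}$ with $s$ large: its strong $X$-positive definiteness is obtained from Theorem \ref{bochbound}(ii) (Schur-test diagonal dominance making $S_X(f_s)$ bounded and invertible, via condition \eqref{boundforinverse}), and its representing measure is computed explicitly through the Fourier--Bessel transform \eqref{radialft}, yielding the Poisson-type density in \eqref{schoenberg4}. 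You instead build a band-limited autocorrelation $g$ supported strictly inside the ball of radius $d_*(X)$, so that $\kS_X(g)=g(0)I$ and strong $X$-positivity is a triviality, while absolute continuity of $\nu_g$ falls out of passing to polar coordinates. Your route buys elementarity and transparency: it bypasses Theorem \ref{bochbound} and all special-function computations, and it isolates exactly what the mechanism needs --- a set of radii of positive $\nu_g$-measure on which the exponential system is Riesz--Fischer, which for your $g$ is automatic. What the paper's route buys is reuse of machinery already established and a classical, explicit family of comparison functions; both seeds depend on $X$ (yours through the support radius, the paper's through the size of $s$), so neither loses generality. Two small points of care in your write-up, both harmless: the identity $g(\cdot)=\widehat{P}(-\,\cdot)$ holds only up to the factor $(2\pi)^{n/2}$, which affects neither support nor positivity; and one should take the bump $\chi$ with compact support strictly inside $\{|\xi|<d_*(X)/2\}$ so that $\supp(\chi*\chi)$ stays strictly inside radius $d_*(X)$, killing the off-diagonal entries even when the infimum $d_*(X)$ is attained --- your phrasing already ensures this. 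Finally, your closing observation that only the absolute continuity of Lebesgue measure with respect to $\nu$ is used (not full equivalence) matches the Remark following Theorem \ref{propositionstronglydef} in the paper, where exactly this relaxation \eqref{abscon} is recorded.
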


\begin{proof}
We begin with a function $f_s(r):=e^{-sr}\in\Phi_n$ and show that for each $X\in\kX_n$ $f_s$ is strongly
$X$-positive definite for all large enough $s>0$. Indeed, take $s$ so that
$$ \|t^{n-1}f_s\|_{L^1(\R_+)}=\int_0^\infty t^{n-1}e^{-st}\,dt=\frac{\Gamma(n)}{s^n}<\frac{d_*^n(X)}{5^nn^2}. $$
By Theorem \ref{bochbound} (see \eqref{boundforinverse}) the Schoenberg operator $S_X(f_s)$ is bounded and invertible,
so \eqref{stronglyxpositive} holds, as needed.

To make use of Corollary \ref{mutac} we compute the measure $\nu(f_s)$.  To this end recall a well-known
result from the Fourier transforms theory, which plays a key role in the sequel.

Let $h\in L^1(\R^n)$ and let $\widehat h$ be its Fourier transform
$$ \widehat h(t):=\frac1{(2\pi)^{n/2}}\,\int_{\R^n} h(x)e^{-i(t,x)}\,dx. $$
If $h(\cdot)= h_0(|\cdot|)$ is a radial function, then so is
$\widehat h(\cdot)=H_0(|\cdot|)$. Moreover, $H_0$ and $h_0$ are
related by (see, e.g., \cite[Theorem IV.3.3]{Stein-Weiss})
\begin{equation}\label{radialft}
H_0(r)=\frac1{r^q}\int_0^\infty
J_q(ru)u^{q+1}h_0(u)\,du=\frac1{2^q\Gamma(q+1)}\int_0^\infty
\Omega_n(ru)u^{n-1}h_0(u)\,du, \quad q:=\frac{n}2-1.
\end{equation}
The latter is usually referred to as the Fourier--Bessel transform.

We apply \eqref{radialft} to a pair of functions
$$ h(x)=\frac{2^{n/2}\Gamma\bigl(\frac{n+1}2\bigr)}{\sqrt{\pi}}\,\frac{s}{(s^2+|x|^2)^{\frac{n+1}2}}\,,
\qquad \widehat h(t)=e^{-s|t|}, $$
(this is a particular case of \eqref{furtrans} below) and come to
\begin{equation}\label{schoenberg4}
f_s(r) =
e^{-sr}=\frac2{B\bigl(\frac{n}2,\frac12\bigr)}\,
\int_0^\infty\Omega_n(ru)\frac{su^{n-1}}{(s^2+u^2)^{\frac{n+1}2}}\,du,
\quad s,t>0, \quad B(a,b):=\frac{\Gamma(a)\Gamma(b)}{\Gamma(a+b)}
\end{equation}
is the Euler beta-function. This is exactly representation \eqref{schoenberg1} of $f_s$  with the measure
$$ \nu = \nu(f_s) = \frac{2}{B\bigl(\frac{n}2,\frac12\bigr)}\,
\frac{su^{n-1}}{{(s^2+u^2)^{\frac{n+1}2}}}\,du, $$
equivalent to the Lebesgue measure.
By the assumption of the theorem  the measures $\nu(f)$ and $\nu(f_s)$ are equivalent. Since
$f_s$ is strongly $X$-positive definite for large enough $s$ and each separated set $X\in\kX_n$, then  by
Corollary \ref{mutac}, so is $f$, as claimed.
\end{proof}
  \begin{remark}
In fact, Theorem  \ref{propositionstronglydef} remains valid whenever the Lebesgue measure on $\R_+$
is absolutely continuous with respect to the measure $\nu$, that~is,
\begin{equation}\label{abscon}
\nu(ds)=\nu_{ac}+\nu_{sing}=\nu'(s)\,ds+\nu_{sing}, \qquad \nu'(s)>0 \ \ {\rm a.e.},
\end{equation}
$\nu_{sing}$ is a singular measure. This statement is immediate from the
obvious identity $S_X(f) = S_X(f_{ac}) + S_X(f_{sing})$, where $f_{ac}$  and $f_{sing}$ are the $\Phi_n$-functions defined by
\eqref{schoenberg1} with the measures $\nu_{ac}$ and $\nu_{sing}$, respectively. It is also a
consequence of Corollary \ref{mutac}, applied in its full extent.
   \end{remark}

\begin{theorem}[=Theorem \ref{intrinftystrong}]\label{inftystrong}
Let $f\in \Phi_\infty(\alpha)$, $0<\alpha\le2$, and $X\in\kX_n$. Then

\begin{itemize}
  \item [\em (i)]
  $f$  is strongly $X$-positive definite. In particular, if $\kS_X(f)$  generates an
  operator $S_X(f)$ on $\ell^2$, then it is positive definite and so invertible.

  \item [\em (ii)]
If the Schoenberg measure $\sigma=\sigma_f$ in $\eqref{alphacm}$ satisfies
\begin{equation}\label{alphabound1}
\int_0^\infty s^{-\frac{d}{\alpha}}\,\sigma(ds)<\infty, \qquad
d=\dim\kL(X),
\end{equation}
then the Schoenberg matrix $\kS_X(f)$  generates a bounded
$($necessarily invertible$)$ operator.

  \item [\em (iii)]
Conversely, let $S_Y(f)$ be bounded for at least one
$\delta$-regular set $Y$. Then \eqref{alphabound1} holds.
\end{itemize}
  \end{theorem}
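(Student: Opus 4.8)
The plan is to obtain all three parts from results already established in the paper, with the genuinely new work confined to part (i). Parts (ii) and (iii) follow almost formally. For (ii) I would invoke Corollary \ref{alphaclass}, whose equivalence \eqref{alphabound} turns the moment condition \eqref{alphabound1} into $t^{d-1}f\in L^1(\R_+)$ and then yields boundedness of $S_X(f)$ for every $X\in\kX_d$ through Theorem \ref{bochbound}(i); invertibility is then free from part (i). For (iii) I would run the same chain in reverse: boundedness of $S_Y(f)$ on a $\delta$-regular set forces $t^{d-1}f\in L^1(\R_+)$ by Theorem \ref{bochbound}(iii), and the equivalence \eqref{alphabound} of Corollary \ref{alphaclass} returns precisely \eqref{alphabound1}.

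The core is part (i). My strategy is to reduce it to Theorem \ref{propositionstronglydef}, in the form of the Remark following it, which requires only that the Lebesgue measure on $\R_+$ be absolutely continuous with respect to the representing measure $\nu$. The one apparent mismatch is that Theorem \ref{propositionstronglydef} is stated for ambient dimension $\ge2$, whereas $\kL(X)$ may be a line. This is harmless: since the family $\{\Phi_m\}$ is nested, $f\in\Phi_\infty(\alpha)\subset\Phi_N$ for $N:=\max(n,2)\ge2$, and embedding $\R^n\hookrightarrow\R^N$ by appending zero coordinates preserves every distance $|x_i-x_j|$, hence preserves both separatedness and the notion of strong $X$-positive definiteness (which sees $X$ only through the values $f(|x_i-x_j|)$). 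So I would regard $X$ as a separated subset of $\R^N$ and apply Theorem \ref{propositionstronglydef} there, once I show that the $\Phi_N$-representing measure $\nu_N(f)$ dominates Lebesgue.

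Computing $\nu_N(f)$ is the main step, and I would pass through Gaussians. Because $\Phi_\infty(\alpha)\subset\Phi_\infty=\Phi_\infty(2)$, the (non-constant) function $f$ admits a representation $f(t)=\int_0^\infty e^{-st^2}\,\tilde\sigma(ds)$ with $\tilde\sigma$ a probability measure charging $(0,\infty)$. The $\Phi_N$-representing measure of a single Gaussian $e^{-st^2}$ is obtained from its Bochner measure on $\R^N$, the Gaussian density $\propto s^{-N/2}e^{-|\omega|^2/(4s)}$, via the radial relation $\nu_s[0,r]=\mu_s\{|x|\le r\}$; this gives $\nu_s$ an everywhere positive density $\propto r^{N-1}e^{-r^2/(4s)}$ on $(0,\infty)$. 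Mixing over $\tilde\sigma$ (Tonelli for nonnegative integrands) yields $\nu_N(f)=\int_0^\infty \nu_s\,\tilde\sigma(ds)$ with density strictly positive on $(0,\infty)$, so that indeed Lebesgue $\ll \nu_N(f)$. Theorem \ref{propositionstronglydef} then delivers strong $X$-positive definiteness; the ``in particular'' clause is immediate, since by the discussion around \eqref{1.10Intro} strong $X$-positive definiteness of $f$ is exactly positive definiteness of $S_X(f)$, whence invertibility once $S_X(f)$ is bounded as in part (ii).

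I expect the only real obstacle to be the justification that $\nu_N(f)$ has positive density — concretely, identifying the Gaussian Bochner measure and checking that the $\tilde\sigma$-mixture cannot degenerate. Everything hinges on non-constancy of $f$, so that $\tilde\sigma$ is not the unit mass at $s=0$; in the degenerate case $f\equiv1$ the matrix $\kS_X(f)$ has rank one and the statement fails, which is why the hypothesis $\const\ne f$, inherited from Theorem \ref{propositionstronglydef}, must be carried along. A secondary point worth a sentence is the interchange of integration in forming $\nu_N(f)=\int_0^\infty\nu_s\,\tilde\sigma(ds)$, but this is routine by positivity.
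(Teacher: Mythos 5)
Your proposal is correct and takes essentially the same route as the paper: parts (ii) and (iii) are obtained exactly as in the text from Corollary \ref{alphaclass} and Theorem \ref{bochbound}, and part (i) is reduced to Theorem \ref{propositionstronglydef} by writing $f\in\Phi_\infty(\alpha)\subset\Phi_\infty$ as a mixture of Gaussians, computing the representing measure of each Gaussian via the radial form of the Fourier transform, and noting that the mixed density is strictly positive on $(0,\infty)$ --- precisely the paper's formula \eqref{shoenbinfty}. Your two additional touches --- embedding into $\R^{\max(n,2)}$ so that Theorem \ref{propositionstronglydef} (stated only for $n\ge2$) applies when $n=1$, and explicitly excluding the degenerate case $f\equiv\const$ (i.e.\ $\sigma=\delta_0$) --- are points of rigor the paper leaves implicit, but they do not change the argument.
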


\begin{proof}
(i). We apply again \eqref{radialft}, now to the pair of functions
$$ h(x)=(2s)^{-n/2}\exp\biggl(-\frac{|x|^2}{4s}\biggr), \qquad \widehat h(t)=e^{-s|t|^2}, $$
to obtain representation \eqref{schoenberg1} for $g_s$
\begin{equation}\label{schoenberg3}
g_s(r) := e^{-sr^2}=\frac{1}{2^q\Gamma(q+1)}\,
\int_0^\infty\Omega_n(ru)\frac{u^{n-1}}{(2s)^{n/2}}\,\exp\biggl(-\frac{u^2}{4s}\biggr)\,du,
\qquad r,s>0,
\end{equation}
(cf. \cite[Section V.4.3]{Akh65}).  Hence for any $g\in\Phi_\infty$ we can relate integral
representations \eqref{schoenberg1} and \eqref{alphacm}. Namely, combining \eqref{schoenberg3} with \eqref{alphacm}
we arrive at representation \eqref{schoenberg1} for $g\in\Phi_\infty$
\begin{equation}\label{shoenbinfty}
g(r) = \int_0^\infty\Omega_n(ru)\phi_{n,\sigma}(u)\,du, \quad
\phi_{n,\sigma}(u) = \frac{u^{n-1}}{2^q\Gamma(q+1)}\int_0^\infty
(2s)^{-n/2}\exp\biggl(-\frac{u^2}{4s}\biggr)\,\sigma(ds).
\end{equation}
Clearly, $\nu(g)$ is equivalent to the Lebesgue measure, and the density $\phi_{n,\sigma}$ is bounded, strictly positive and
continuous on $\R_+$. The rest is Theorem \ref{propositionstronglydef}.

(ii). By Corollary \ref{alphaclass}, the Schoenberg operator $S_X(f)$ is bounded. It is invertible in view of the strong $X$-positive
definiteness of $f$.

(iii) is a combination of Theorem \ref{bochbound}, (iii), and  Corollary \ref{alphaclass}. The proof is complete.
\end{proof}
  \begin{remark}
As a special case of Theorem \ref{inftystrong}  we get that the function $g_{s}$ (see \eqref{schoenberg3}) is strongly
$X$-positive definite for all $s>0$ and each $X\in\kX_n$. The corresponding Schoenberg operator $S_X(g_{s})$ is bounded
and invertible by Theorem \ref{inftystrong}.
   \end{remark}
  \begin{example}
According to representation \eqref{alphacm}  each  $f\in \Phi_\infty(\alpha)$ is monotone decreasing. The following
example demonstrates that the monotonicity  is not necessary for $f$ to be  strongly $X$-positive definite for each separated set
$X\in\kX_n$. In particular, it gives an example of strongly $X$-positive definite function from $\Phi_n\backslash\Phi_\infty$.

Let  $K_\mu$ be the modified Bessel function of the second kind and
order $\mu$ (the definition and properties of $K_\mu$ are given in the next section).
By \cite[p.435, (5)]{Wat} the following  integral representation holds for $n\ge3$

  \begin{equation}\label{3.21}
h_s(r) :=
\Omega_n(rs)M_q(rs)=\frac{2(2s)^{n-2}}{B\bigl(q,\frac12\bigr)}\,\int_0^\infty\Omega_n(ru)\,\frac{u^{n-1}}{(u^4+4s^4)^{\frac{n-1}2}}\,du,
\quad M_q(t):=\frac{t^qK_q(t)}{2^{q-1}\Gamma(q)}
   \end{equation}
is the Whittle--Mat\'ern function, well-established in spatial statistics, $q=n/2-1$, $s>0$ is a parameter. We show later that
$M_q\in\Phi_\infty$, so the function $h_s\in \Phi_n$. Its representing measure $\nu(h_s)$ in \eqref{schoenberg1}
is equivalent to the Lebesgue measure and given explicitly by
$$ \nu(h_s)=\frac{2(2s)^{n-2}}{B\bigl(q,\frac12\bigr)}\,\frac{u^{n-1}}{(u^4+4s^4)^{\frac{n-1}2}}\,du $$
so by Theorem \ref{propositionstronglydef} $h_s$ is strongly $X$-positive definite function for each $X\in\kX_n$.

On the other hand, $h_s$ has infinitely many real zeros, so it is not monotone decreasing and hence  $f\not \in \Phi_\infty$. Thus, by
\eqref{3.21}, $f\in \Phi_n\backslash\Phi_\infty$.
\end{example}
\begin{remark}
If a real-valued function $f$ obeys $|f(r)|\le ce^{-ar}$, $a>0$, (as in the above example),
then by Proposition \ref{nonmonoton}, the Schoenberg operator $S_X(f)$ is bounded for each $X\in\kX_n$ \emph{and any} $n\in\N$.
   \end{remark}

\subsection{``Grammization'' of Schoenberg matrices}

Our goal here is to implement the ``grammization'' procedure, (see Introduction), for two
positive definite Schoenberg's matrices
\begin{equation}\label{grammiz}
\kS_X(f)=\|\exp\,(-a|x_i-x_j|^2)\|_{i,j\in\N}, \qquad \kS_X(f)=\|\exp\,(-a|x_i-x_j|)\|_{i,j\in\N}\,, \qquad a>0,
\end{equation}
and also for a certain family of Schoenberg's matrices which contains the second one in \eqref{grammiz}.

The key observation is stated as the following lemma.
\begin{lemma}\label{lem4.11}
Let $f\in L^2(\R^n)$ and $f_\xi:=f(\cdot-\xi)$ be its shift on
$\xi\in\R^n$. Then for any $\xi,\eta\in\R^n$
  \begin{equation}\label{genergram}
\langle f_{\xi},f_{\eta}\rangle_{L^2(\R^n)} = \widehat{F}(\xi-\eta),
\qquad F(t):=(2\pi)^{n/2}|\widehat{f}(t)|^2.
  \end{equation}
\end{lemma}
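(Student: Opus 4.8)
The plan is to reduce the claim to two standard facts about the (symmetrically normalized) Fourier transform $\widehat h(t)=(2\pi)^{-n/2}\int_{\R^n}h(x)e^{-i(t,x)}\,dx$ already fixed in \eqref{radialft}: the Plancherel identity and the translation rule. Recall that with this normalization the map $h\mapsto\widehat h$ is a unitary operator on $L^2(\R^n)$, so $\langle g,h\rangle_{L^2(\R^n)}=\langle\widehat g,\widehat h\rangle_{L^2(\R^n)}$, and that the inner product on $L^2(\R^n)$ is $\langle g,h\rangle=\int_{\R^n}g\,\overline{h}$ (the convention consistent with \eqref{grammatrix}).

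First I would record the translation rule. For $f\in L^2(\R^n)$ and $\xi\in\R^n$ the substitution $y=x-\xi$ gives $\widehat{f_\xi}(t)=e^{-i(t,\xi)}\widehat f(t)$, and similarly $\widehat{f_\eta}(t)=e^{-i(t,\eta)}\widehat f(t)$. Since $f\in L^2(\R^n)$ and translation is an isometry of $L^2(\R^n)$, both $f_\xi$ and $f_\eta$ lie in $L^2(\R^n)$, so Plancherel applies and yields
\begin{align*}
\langle f_\xi,f_\eta\rangle_{L^2(\R^n)}
&=\langle\widehat{f_\xi},\widehat{f_\eta}\rangle_{L^2(\R^n)}
=\int_{\R^n}e^{-i(t,\xi)}\widehat f(t)\,\overline{e^{-i(t,\eta)}\widehat f(t)}\,dt\\
&=\int_{\R^n}e^{-i(t,\xi-\eta)}\,|\widehat f(t)|^2\,dt.
\end{align*}

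It then remains to identify the last integral with $\widehat F(\xi-\eta)$. Using the definition $F(t)=(2\pi)^{n/2}|\widehat f(t)|^2$ and the same normalization of the transform,
$$
\widehat F(\xi-\eta)=\frac{1}{(2\pi)^{n/2}}\int_{\R^n}(2\pi)^{n/2}|\widehat f(t)|^2\,e^{-i(\xi-\eta,t)}\,dt
=\int_{\R^n}e^{-i(t,\xi-\eta)}\,|\widehat f(t)|^2\,dt,
$$
which coincides with the expression above, proving \eqref{genergram}. The only point needing a word of justification, rather than a genuine obstacle, is that the integral defining $\widehat F$ converges pointwise: because $\widehat f\in L^2(\R^n)$ we have $|\widehat f|^2\in L^1(\R^n)$, hence $F\in L^1(\R^n)$ and $\widehat F$ is a bounded continuous function on $\R^n$ by the Riemann–Lebesgue lemma, so its value at $\xi-\eta$ is unambiguous. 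Everything else is bookkeeping of the $(2\pi)^{n/2}$ factors coming from the chosen normalization.
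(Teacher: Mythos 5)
Your proof is correct and takes essentially the same route as the paper's: the translation rule $\widehat{f_\xi}(t)=e^{-i(t,\xi)}\widehat f(t)$ combined with Parseval's identity, followed by identification of $\int_{\R^n}|\widehat f(t)|^2e^{-i(t,\xi-\eta)}\,dt$ with $\widehat F(\xi-\eta)$. Your bookkeeping of the normalization is in fact more careful than the paper's own display, whose final equality carries a spurious factor $(2\pi)^{n/2}$ (a typo, since the lemma as stated is exactly what your computation yields), and your remark that $F\in L^1(\R^n)$ makes $\widehat F$ pointwise well defined is a sensible, if minor, addition.
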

  \begin{proof}
Since
$$
\widehat{f_\xi}(t)=\frac1{(2\pi)^{n/2}}\int_{\R^n} f_\xi(x)e^{-i(x,t)}\,dx=\widehat{f}(t)e^{-i(t,\xi)},
$$
we have by Parseval's equality
$$
 \langle f_{\xi}, f_{\eta}\rangle_{L^2(\R^n)} =\langle\widehat{f_{\xi}}, \widehat{f_{\eta}}\rangle_{L^2(\R^n)}
=\int_{\R^n} |\widehat{f}(t)|^2 e^{-i(t,\xi-\eta)}\,dt =
(2\pi)^{n/2}\widehat{F}(\xi-\eta), \quad \xi,\eta\in\R^n,
$$
as claimed.
     \end{proof}

\begin{proposition}\label{gramm1}
Let $\xi,\eta\in\R^n$, $a>0$. Then
  \begin{equation}\label{gramshoen}
e^{-\frac{a}2\,|\xi-\eta|^2}=\biggl(\frac{2a}{\pi}\biggr)^{n/2}\,\langle h_{a,\xi}, h_{a,\eta}\rangle_{L^2(\R^n)}, \qquad
h_{a,\xi}(x)=e^{-a|x-\xi|^2}.
  \end{equation}
The grammization of the first Schoenberg's matrix in \eqref{grammiz} reads as follows
\begin{equation}\label{grammizphi}
\|\exp\,\Bigl(-\frac{a}2\,|x_i-x_j|^2\Bigr)\|_{i,j\in\N}=\biggl(\frac{2a}{\pi}\biggr)^{n/2}\,Gr(\{f_j\}, L^2(\R^n)), \quad
f_j(x)=e^{-a|x-x_j|^2}.
\end{equation}
   \end{proposition}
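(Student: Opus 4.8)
The plan is to establish the two-point identity \eqref{gramshoen} first and then read off the grammization \eqref{grammizphi} as an immediate specialization. The most economical route is to invoke Lemma \ref{lem4.11}, just proved, with the base function $f=h_{a,0}$ where $h_{a,0}(x)=e^{-a|x|^2}$; its shifts are precisely $f_\xi=h_{a,\xi}$. By that lemma, $\langle h_{a,\xi},h_{a,\eta}\rangle_{L^2(\R^n)}=\widehat F(\xi-\eta)$ with $F(t)=(2\pi)^{n/2}|\widehat{h_{a,0}}(t)|^2$, so the whole proposition reduces to computing two Gaussian Fourier transforms.

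First I would compute the Fourier transform of the Gaussian in the normalization used in the paper. Completing the square in each coordinate gives $\widehat{h_{a,0}}(t)=(2a)^{-n/2}\,e^{-|t|^2/(4a)}$, again a Gaussian. Hence $F(t)=(2\pi)^{n/2}(2a)^{-n}e^{-|t|^2/(2a)}$, and a second application of the same Gaussian-transform formula yields $\widehat F(w)=\left(\frac{\pi}{2a}\right)^{n/2}e^{-\frac{a}{2}|w|^2}$. Substituting $w=\xi-\eta$ produces $\langle h_{a,\xi},h_{a,\eta}\rangle=\left(\frac{\pi}{2a}\right)^{n/2}e^{-\frac{a}{2}|\xi-\eta|^2}$, which rearranges into exactly \eqref{gramshoen}.

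An alternative, fully self-contained route bypasses Lemma \ref{lem4.11} and evaluates $\int_{\R^n}e^{-a|x-\xi|^2}e^{-a|x-\eta|^2}\,dx$ directly. Writing $m=\tfrac12(\xi+\eta)$ and completing the square gives the identity $|x-\xi|^2+|x-\eta|^2=2|x-m|^2+\tfrac12|\xi-\eta|^2$; the cross term $\tfrac12|\xi-\eta|^2$ factors out of the integral, and the translation-invariant remainder $\int_{\R^n}e^{-2a|x-m|^2}\,dx=\left(\frac{\pi}{2a}\right)^{n/2}$ gives the same value for the inner product. Either derivation is routine.

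Finally, the grammization \eqref{grammizphi} is immediate: taking $\xi=x_i$ and $\eta=x_j$ in \eqref{gramshoen}, and recalling that $f_j(x)=e^{-a|x-x_j|^2}=h_{a,x_j}$, the $(i,j)$ entry of the left-hand matrix equals $\left(\frac{2a}{\pi}\right)^{n/2}\langle f_i,f_j\rangle_{L^2(\R^n)}$, which is by definition $\left(\frac{2a}{\pi}\right)^{n/2}$ times the $(i,j)$ entry of $Gr(\{f_j\},L^2(\R^n))$. There is no genuine obstacle here; the only care needed is bookkeeping of the constants (the factor $2\pi$ from the Fourier convention and the powers of $2a$), verifying that the two Gaussian transforms assemble into precisely the constant $(\pi/2a)^{n/2}$.
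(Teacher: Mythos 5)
Your proof is correct and follows essentially the same route as the paper: the paper's proof is precisely your first argument, combining Lemma \ref{lem4.11} with the Gaussian Fourier transform formula $\widehat{e^{-b|\cdot\,|^2}}(t)=(2b)^{-n/2}e^{-|t|^2/(4b)}$, and your constants $(\pi/2a)^{n/2}$ check out. The alternative completing-the-square computation you sketch is also valid, but it is merely a self-contained substitute for the same Fourier bookkeeping.
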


\begin{proof} Combining Lemma  \ref{lem4.11}  (see \eqref{genergram}) with  the
well-known formula
\begin{equation*}
\widehat{e^{-b|\cdot\,|^2}}(t)=\frac1{(2\pi)^{n/2}}\int_{\R^n}
e^{-b|x|^2-i(x,t)}\,dx =\frac1{(2b)^{n/2}}\,e^{-\frac{|t|^2}{4b}}\,,
\qquad b>0,
\end{equation*}
yields the result.
\end{proof}

The grammization of the second  Schoenberg matrix in \eqref{grammiz}
is similar but technically more involved.

We begin with the brief reminder of the modified Bessel functions $K_\mu$ of the second kind of order $\mu$, which
solve the differential equations
$$
t^2u''(t) + tu'(t)-(t^2 + \mu^2)u(t)=0, \qquad t>0, \quad \mu\in\R.
$$
The asymptotics for $K_\mu$ is well known (see \cite[(9.6.8)--(9.6.9)]{Abr-Ste}, \cite[p.202, (1)]{Wat})
\begin{equation}\label{asymbes}
\begin{split}
K_\mu(t)&= \left\{
  \begin{array}{ll}
    \frac{\Gamma(\mu)}2\,\left(\frac{t}2\right)^{-\mu} + O(t^{-\mu+2}), & \hbox{$\mu>0$;} \\
    \log\frac2{t} + O(1), & \hbox{$\mu=0$;}
  \end{array}
\right.
 \quad t\to 0, \\
K_\mu(t)&=\sqrt{\frac{\pi}{2t}}e^{-t}(1+O(t^{-1})), \quad
t\to\infty.
\end{split}
\end{equation}

The functions $K_\mu$ are known to satisfy $K_{-\mu}=K_\mu$ and to admit the integral representations (see, e.g., \cite[p.172, (4),(5)]{Wat})
\begin{equation}\label{modbess}
\begin{split}
K_\mu(z) &=\frac{\sqrt{\pi}}{\Gamma\bigl(\mu+\frac12\bigr)}\,\biggl(\frac{z}2\biggr)^\mu\,\int_0^\infty e^{-z\cosh r}\sinh^{2\mu}(r)dr \\ &=
\frac{\sqrt{\pi}}{\Gamma\bigl(\mu+\frac12\bigr)}\,\biggl(\frac{z}2\biggr)^\mu\,
\int_1^\infty e^{-zt}(t^2-1)^{\mu-\frac12}\,dt, \qquad \mu>-\frac12, \quad |\arg z|<\frac{\pi}2.
\end{split}
\end{equation}
Clearly, $K_\mu$ is positive and monotone decreasing function on $\R_+$.

\begin{proposition}\label{gramm2}
Let $n\ge2$ and $K_\mu$ be the modified Bessel function of the second kind of order $\mu$, $0\le\mu<n/4$. For $a>0$ put
\begin{equation}\label{rieszradial}
f_{a,\mu}(x):=\biggl(\frac{a}{|\,x|}\,\biggr)^{\mu}\,K_\mu(a|\,x|), \quad f_{a,\mu,\xi}(x):=f_{a,\mu}(x-\xi), \qquad x,\xi\in\R^n.
\end{equation}
Then with $p:=\frac{n}2-2\mu>0$ the following equality holds for all $\xi,\eta\in\R^n$
\begin{equation}\label{grambern}
\biggl(\frac{|\xi-\eta|}{a}\biggr)^p\,K_p\,(a|\xi-\eta|)=
\frac{2^{\frac{n}2-2\mu}}{\pi^{\frac{n}2}\,B\bigl(\frac{n}2-\mu, \frac12\bigr)}\,
\langle f_{a,\mu,\xi}, \,f_{a,\mu,\eta}\rangle_{L^2(\R^n)}\,.
\end{equation}
\end{proposition}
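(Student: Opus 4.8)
The plan is to reduce everything to Lemma~\ref{lem4.11}, which expresses the inner products $\langle f_{a,\mu,\xi}, f_{a,\mu,\eta}\rangle_{L^2(\R^n)}$ through the single radial function $F(t)=(2\pi)^{n/2}|\widehat{f_{a,\mu}}(t)|^2$: once $\widehat{f_{a,\mu}}$ is known I set $\langle f_{a,\mu,\xi}, f_{a,\mu,\eta}\rangle=\widehat F(\xi-\eta)$, so the whole proposition becomes the computation of two radial Fourier transforms evaluated at $\xi-\eta$. I first note that the hypothesis $0\le\mu<n/4$ is exactly what makes $f_{a,\mu}\in L^2(\R^n)$ (the singularity $|x|^{-2\mu}K_\mu(a|x|)^2\sim c|x|^{-4\mu}$ at the origin is integrable iff $4\mu<n$) and makes $F\in L^1(\R^n)$, so that $\widehat F$ is a genuine continuous function and both sides of \eqref{grambern} make sense.

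First I would compute $\widehat{f_{a,\mu}}$. The key device is the subordination formula \eqref{modbess}: substituting it into $f_{a,\mu}(x)=a^\mu|x|^{-\mu}K_\mu(a|x|)$, the factor $|x|^{-\mu}$ cancels the $|x|^\mu$ produced by \eqref{modbess}, leaving
$$ f_{a,\mu}(x)=\frac{\sqrt\pi\,a^{2\mu}}{2^\mu\Gamma(\mu+\tfrac12)}\int_1^\infty e^{-a\tau|x|}(\tau^2-1)^{\mu-\frac12}\,d\tau, $$
so $f_{a,\mu}$ is a superposition of the functions $e^{-a\tau|x|}$. Their Fourier transforms are the Poisson-type kernels appearing in \eqref{schoenberg4}, namely $\widehat{e^{-s|\cdot|}}(t)=\frac{2^{n/2}\Gamma(\frac{n+1}2)}{\sqrt\pi}\,s\,(s^2+|t|^2)^{-\frac{n+1}2}$. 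Interchanging integration and the Fourier transform and evaluating the resulting elementary integral in $\tau$ (a Beta integral after the substitution $u=\tau^2-1$ and a rescaling by $a^2+|t|^2$) I expect to arrive at the Mat\'ern pair
$$ \widehat{f_{a,\mu}}(t)=2^{\gamma-1}\Gamma(\gamma)\,(a^2+|t|^2)^{-\gamma}, \qquad \gamma:=\tfrac n2-\mu, $$
where the $a$-powers cancel and the Beta value $B(\mu+\tfrac12,\gamma)=\Gamma(\mu+\tfrac12)\Gamma(\gamma)/\Gamma(\tfrac{n+1}2)$ combines with the prefactors.

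It remains to Fourier-transform $F(t)=(2\pi)^{n/2}2^{2\gamma-2}\Gamma(\gamma)^2(a^2+|t|^2)^{-2\gamma}$. Since $2\cdot 2\gamma=2(n-2\mu)>n$ exactly when $\mu<n/4$, this is the classical Bessel-potential Fourier transform, which reproduces a function of the same Mat\'ern type: writing the exponent as $2\gamma=p+\tfrac n2$ with $p=\tfrac n2-2\mu$, one has $\widehat{(a^2+|\cdot|^2)^{-2\gamma}}(x)=\frac{a^{-p}}{2^{2\gamma-1}\Gamma(2\gamma)}\,|x|^p K_p(a|x|)$. I would obtain this either by citing the standard formula or, to stay self-contained, by self-reciprocity of the radial Fourier transform applied to the previous pair at order $\gamma\rightsquigarrow 2\gamma$; the only point needing care is that this order lies outside the range $\mu>-\tfrac12$ for which \eqref{modbess} converges directly, so the identity must be reached by analytic continuation in the order parameter (both sides are analytic where defined). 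Inserting this into $\widehat F(\xi-\eta)$ gives $\langle f_{a,\mu,\xi},f_{a,\mu,\eta}\rangle=\frac{(2\pi)^{n/2}\Gamma(\gamma)^2}{2\Gamma(2\gamma)}\,(|\xi-\eta|/a)^p K_p(a|\xi-\eta|)$.

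The last step is pure bookkeeping: I invert the constant and check it equals the claimed one. Using the Legendre duplication formula $\Gamma(2\gamma)=\frac{2^{2\gamma-1}}{\sqrt\pi}\Gamma(\gamma)\Gamma(\gamma+\tfrac12)$ together with $B(\gamma,\tfrac12)=\Gamma(\gamma)\Gamma(\tfrac12)/\Gamma(\gamma+\tfrac12)$ and $\Gamma(\tfrac12)=\sqrt\pi$, the factor $\frac{2\Gamma(2\gamma)}{(2\pi)^{n/2}\Gamma(\gamma)^2}$ collapses to $\frac{2^{n/2-2\mu}}{\pi^{n/2}B(\frac n2-\mu,\frac12)}$, which is precisely the constant in \eqref{grambern}. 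The main obstacle is not conceptual but is this two-stage constant tracking through two Fourier transforms and the duplication formula, together with the justification of the inverse Mat\'ern transform at the power $2\gamma$, where the direct integral representation \eqref{modbess} no longer converges and one must appeal to analytic continuation.
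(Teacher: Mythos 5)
Your proposal is correct, and its skeleton coincides with the paper's: reduce via Lemma~\ref{lem4.11} to computing $\widehat F(\xi-\eta)$, evaluate the two radial Fourier transforms, and collapse the constant with the Legendre duplication formula $\Gamma(2z)=2^{2z-1}\pi^{-1/2}\Gamma(z)\Gamma(z+\tfrac12)$ --- your intermediate formulas $\widehat{f_{a,\mu}}(t)=2^{\gamma-1}\Gamma(\gamma)(a^2+|t|^2)^{-\gamma}$ and $\widehat{(a^2+|\cdot|^2)^{-2\gamma}}(x)=a^{-p}\,2^{1-2\gamma}\Gamma(2\gamma)^{-1}|x|^pK_p(a|x|)$ are exactly the paper's \eqref{furtrans} and \eqref{fourtran1}, and the final bookkeeping matches. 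Where you genuinely diverge is in how the first transform is obtained: the paper feeds $f_{a,\mu}$ into the Fourier--Bessel formula \eqref{radialft} and invokes Watson's integral $\int_0^\infty J_q(rs)K_\mu(as)s^{-\lambda}ds$, which produces a Gauss hypergeometric function that then collapses because an upper and the lower parameter coincide; you instead use the subordination representation \eqref{modbess} to write $f_{a,\mu}$ as a superposition of exponentials $e^{-a\tau|x|}$, transform termwise with the Poisson-type kernel already recorded in \eqref{schoenberg4}, and finish with a Beta integral. Your route is more elementary and self-contained (no hypergeometric functions, only formulas already present in the paper), at the cost of a routine Fubini check; the paper's route is shorter given Watson's tables and treats both transforms uniformly through \eqref{radialft}. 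For the second transform the paper simply cites Watson [p.434, (2)] (equivalently Wendland, Theorem 6.13), which is also your primary option; your fallback via self-reciprocity plus analytic continuation in the order is sound but is the one step needing real care, since for orders $\sigma\in(n/4,n/2]$ the pair holds only in the $L^2$ sense, so the continuation to $\sigma=2\gamma>n/2$ should be run after pairing both sides with a Schwartz test function (both pairings are analytic in $\sigma$ on $\{\re\sigma>n/4\}$ and agree on an interval), after which pointwise equality follows from continuity. With that caveat spelled out, nothing in your argument fails.
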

\begin{proof}
It follows from \eqref{asymbes} that $f_{a,\mu}\in L^1(\R^n)\cap L^2(\R^n)$ for $0\le\mu<n/4$.
We begin with the formula for the Fourier transform
\begin{equation}\label{furtrans}
\widehat{f_{a,\mu}}(t)=\frac{2^{q-\mu}\,\Gamma(q-\mu+1)}{(a^2+|t|^2)^{q-\mu+1}}\,.
\end{equation}
It is likely to be known, but due to its importance for the sequel, we outline the proof.

As $f_{a,\mu}$ is a radial function, then so is its Fourier transform $\widehat{f_{a,\mu}}(\cdot)=F_{a,\mu}(|\cdot|)$
and by \eqref{radialft},
$$ F_{a,\mu}(r)=\frac1{r^q}\int_0^\infty J_q(rs)s^{q+1}f_{a,\mu}(s)ds=
\frac{a^\mu}{r^q}\int_0^\infty J_q(rs)K_\mu(as)s^{q-\mu+1}\,ds, \quad q=\frac{n}2-1. $$
The latter integral is known in the theory of Bessel functions as (see \cite[p.410, (1)]{Wat})
\begin{equation*}
\begin{split}
\int_0^\infty & J_q(rs)K_\mu(as)s^{-\lambda}ds=
\frac{\Gamma(\frac{q-\lambda+\mu+1}2)\Gamma(\frac{q-\lambda-\mu+1}2)}{2^{\lambda+1}\Gamma(q+1)}\,\frac{r^q}{a^{q-\lambda+1}}\times \\
& F\biggl(\frac{q-\lambda+\mu+1}2\,, \,\frac{q-\lambda-\mu+1}2\,; \,q+1; \,-\frac{r^2}{a^2}\biggr), \quad q-\lambda+1>\mu,
\end{split}
\end{equation*}
$F$ is the Gauss hypergeometric function. The calculation with $\lambda=\mu-q-1$ gives
$$ q-\lambda+\mu+1=2(q+1), \quad q-\lambda-\mu+1=2(q-\mu+1)=n-2\mu>0, $$
so
$$ F_{a,\mu}(r)=2^{q-\mu}\,\Gamma(q-\mu+1)\, a^{2(\mu-q-1)}\,F\biggl(q+1, q-\mu+1; q+1; \,-\frac{r^2}{a^2}\biggr). $$
The known formula for the hypergeometric series
$$ F\biggl(q+1,q-\mu+1;q+1;\,-\frac{r^2}{a^2}\biggr)=\frac{a^{2(q-\mu+1)}}{(a^2+r^2)^{q-\mu+1}} $$
leads to \eqref{furtrans}.

To apply \eqref{genergram} it remains to compute
$$ \widehat{F}(t)=\frac1{(2\pi)^{n/2}}\,\int_{\R^n}|\widehat{f_{a,\mu}}(u)|^2 e^{-i(t,u)}\,du=
\frac1{(2\pi)^{n/2}}\,\int_{\R^n}\frac{e^{-i(t,u)}}{(a^2+|\,u|^2)^{2(q-\mu+1)}}\,du.
$$
The latter Fourier transform is known (see, e.g., \cite[Theorem 6.13]{Wend05}) and can be computed, for instance,
by using again \eqref{radialft} and \cite[p.434, (2)]{Wat}
\begin{equation}\label{fourtran1}
g(x)=G(|x|), \quad G(r)=\frac1{r^q}\int_0^\infty \frac{s^{q+1}J_q(rs)\,ds}{(a^2+s^2)^{2(q-\mu+1)}}=
\biggl(\frac{r}{a}\biggr)^{q-2\mu+1}\frac{K_{2\mu-q-1}(ar)}{2^{2q-2\mu+1}\,\Gamma(2(q-\mu+1))}\,. \end{equation}
Since
$$ q-2\mu+1=\frac{n}2-2\mu=p>0, \qquad K_{-p}=K_p, $$
we have
\begin{equation*}
\langle f_{a,\mu,\xi}, \,f_{a,\mu,\eta}\rangle_{L^2(\R^n)}=\frac{(2\pi)^{n/2}\Gamma^2\bigl(\frac{n}2-\mu\bigr)}{2\Gamma(n-2\mu)}\,
\biggl(\frac{|\xi-\eta|}{a}\biggr)^p\,K_p\,(a|\xi-\eta|\,).
\end{equation*}
But $\Gamma(2z)=2^{2z-1}\pi^{-1/2}\,\Gamma(z)\Gamma(z+1/2)$ and \eqref{grambern} follows.
\end{proof}
   \begin{corollary}\label{cor3.13}
The grammization for the second Schoenberg matrix in \eqref{grammiz}
is
\begin{equation}\label{grammizexp}
\begin{split}
\|\exp\,(-a|x_j-x_k|)\|_{j,k\in\N} &=Gr(\{g_j\}, L^2(\R^n)), \\
\quad g_j(x) &=\sqrt{\frac{2\Gamma\bigl(\frac{n+3}4\bigr)a}{\pi^{\frac{n+2}2}\,\Gamma\bigl(\frac{n+1}4\bigr)}}\,
\biggl(\frac{a}{|x-x_j|}\,\biggr)^{\frac{n-1}4}\,K_{\frac{n-1}4}(a|x-x_j|).
\end{split}
\end{equation}
In particular,
\begin{equation}\label{malshmu}
 e^{-a|\xi-\eta|}=\frac{a}{2\pi}\,\int_{\R^3} \frac{e^{-a|x-\xi|}}{|x-\xi|}\,\frac{e^{-a|x-\eta|}}{|x-\eta|}\,dx,
\qquad \xi,\eta\in\R^3, \quad a>0.
\end{equation}
\end{corollary}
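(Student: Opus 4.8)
The plan is to specialize Proposition \ref{gramm2} to the single value of $\mu$ that collapses the modified Bessel function on the left-hand side of \eqref{grambern} into a pure exponential. First I would set $\mu=(n-1)/4$, which satisfies the admissibility constraint $0\le\mu<n/4$ for every $n\ge2$, and observe that then the parameter $p=\frac{n}{2}-2\mu$ equals $\frac12$. Reading \eqref{modbess} with $\mu=\frac12$ — where $\Gamma(\mu+\frac12)=\Gamma(1)=1$ and the factor $(t^2-1)^{\mu-1/2}$ is identically one — gives at once the elementary closed form $K_{1/2}(z)=\sqrt{\pi/(2z)}\,e^{-z}$, whence
\[
\Bigl(\frac{r}{a}\Bigr)^{1/2}K_{1/2}(ar)=\frac1a\sqrt{\frac{\pi}{2}}\,e^{-ar},\qquad r=|\xi-\eta|.
\]
Substituting this into the left-hand side of \eqref{grambern} therefore already produces $e^{-a|\xi-\eta|}$ up to an explicit constant.

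Next I would evaluate the prefactor on the right-hand side of \eqref{grambern} at $\mu=(n-1)/4$. Here $2^{\frac{n}{2}-2\mu}=2^{1/2}=\sqrt2$ and $\frac{n}{2}-\mu=\frac{n+1}{4}$, so the beta-function factor becomes $B\bigl(\tfrac{n+1}{4},\tfrac12\bigr)=\Gamma\bigl(\tfrac{n+1}{4}\bigr)\sqrt\pi\,\big/\,\Gamma\bigl(\tfrac{n+3}{4}\bigr)$. Collecting these numerical factors together with the $\frac1a\sqrt{\pi/2}$ above, a routine simplification yields
\[
e^{-a|\xi-\eta|}=\frac{2a\,\Gamma\bigl(\tfrac{n+3}{4}\bigr)}{\pi^{(n+2)/2}\,\Gamma\bigl(\tfrac{n+1}{4}\bigr)}\,\bigl\langle f_{a,\mu,\xi},f_{a,\mu,\eta}\bigr\rangle_{L^2(\R^n)},\qquad \mu=\tfrac{n-1}{4}.
\]
Absorbing the (positive) constant into the functions by setting $g_j:=C^{1/2}f_{a,(n-1)/4,x_j}$, with $C$ the displayed prefactor, gives $\langle g_j,g_k\rangle_{L^2(\R^n)}=e^{-a|x_j-x_k|}$, which is precisely the grammization \eqref{grammizexp}; writing out $f_{a,(n-1)/4}$ reproduces the stated formula for $g_j$.

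Finally, the identity \eqref{malshmu} is just the case $n=3$. Then $\mu=\frac12$, and using $K_{1/2}(z)=\sqrt{\pi/(2z)}\,e^{-z}$ once more gives $f_{a,1/2}(x)=\sqrt{\pi/2}\,e^{-a|x|}/|x|$, while the constant simplifies via $\Gamma(3/2)=\sqrt\pi/2$ and $\Gamma(1)=1$ to $C=a/\pi^2$, so that
\[
e^{-a|\xi-\eta|}=\frac{a}{\pi^2}\cdot\frac{\pi}{2}\int_{\R^3}\frac{e^{-a|x-\xi|}}{|x-\xi|}\,\frac{e^{-a|x-\eta|}}{|x-\eta|}\,dx=\frac{a}{2\pi}\int_{\R^3}\frac{e^{-a|x-\xi|}}{|x-\xi|}\,\frac{e^{-a|x-\eta|}}{|x-\eta|}\,dx,
\]
which is \eqref{malshmu}. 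I expect no genuine obstacle here beyond careful bookkeeping of the gamma- and beta-function constants: the only analytic input, the reduction of $K_{1/2}$ to an exponential, is immediate from \eqref{modbess}, and the substance of the argument is carried entirely by the already-proved Proposition \ref{gramm2}.
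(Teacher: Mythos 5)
Your proposal is correct and follows exactly the paper's route: specialize Proposition \ref{gramm2} to $\mu=\frac{n-1}{4}$ (so $p=\frac12$), reduce $K_{1/2}$ to the elementary exponential, and obtain \eqref{malshmu} as the case $n=3$, $\mu=\frac12$. The only cosmetic difference is that you derive $K_{1/2}(z)=\sqrt{\pi/(2z)}\,e^{-z}$ directly from \eqref{modbess} and spell out the gamma/beta bookkeeping, whereas the paper cites Watson for the closed form and leaves the constants implicit.
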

\begin{proof}
Take $\mu=\frac{n-1}4$, so $p=1/2$, and the function in the left side of \eqref{grambern} is just
the exponential function \cite[p.80, (13)]{Wat}
  \begin{equation}\label{4.27}
\sqrt{\frac{|\xi-\eta|}{a}}\,K_{1/2}(a|\xi-\eta|)=\sqrt{\frac{\pi}{2}}\,\frac{e^{-a|\xi-\eta|}}{a},
    \end{equation}
which is \eqref{grammizexp}.

If $n=3$, $\mu=1/2$, then
\begin{equation} \label{malshmu1}
f_{a,1/2,\xi}(x)=\biggl(\frac{a}{|x-\xi|}\,\biggr)^{1/2}\,K_{1/2}(a|x-\xi|)=\sqrt{\frac{\pi}2}\,\frac{e^{-a|x-\xi|}}{|x-\xi|},
\end{equation}
and \eqref{malshmu} follows.
\end{proof}

Note that \eqref{malshmu} is one of the cornerstones of \cite{MalSch12} (see formula (3.26) in there).

The case $n=2$, $\mu=0$ leads to the following

\begin{corollary}\label{cor3.14}
For all $\xi,\eta\in\R^2$ and $a>0$
\begin{equation*}
\frac{|\xi-\eta|}{a}\,K_1\,(a|\xi-\eta|)=
\frac1{\pi}\,\langle K_0(a|\cdot-\xi|), \,K_0(a|\cdot-\eta|)\rangle_{L^2(\R^2)}.
\end{equation*}
\end{corollary}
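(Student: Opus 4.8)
The plan is to obtain this identity as the direct specialization of Proposition \ref{gramm2} to the parameters $n=2$ and $\mu=0$. First I would check that these values satisfy the standing hypotheses of that proposition: we need $0\le\mu<n/4$, which for $n=2$ reads $0\le 0<\tfrac12$, and we need $p:=\frac{n}{2}-2\mu>0$, which here gives $p=1>0$. In particular $f_{a,\mu}\in L^2(\R^2)$ by the opening remark in the proof of Proposition \ref{gramm2}, so the inner product on the right-hand side is well defined.

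Next I would substitute these values into \eqref{rieszradial} and \eqref{grambern}. Since the prefactor in \eqref{rieszradial} is $(a/|x|)^{0}=1$, we get $f_{a,0}(x)=K_0(a|x|)$, hence $f_{a,0,\xi}(x)=K_0(a|x-\xi|)$, matching the functions appearing in the corollary. With $p=1$ the left-hand side of \eqref{grambern} becomes $\frac{|\xi-\eta|}{a}\,K_1(a|\xi-\eta|)$, exactly as claimed. It then remains only to simplify the constant $\frac{2^{n/2-2\mu}}{\pi^{n/2}\,B(\frac{n}{2}-\mu,\frac12)}$ at $n=2$, $\mu=0$. The main (and essentially only) computation is the beta value $B(1,\tfrac12)=\frac{\Gamma(1)\Gamma(1/2)}{\Gamma(3/2)}=\frac{\sqrt{\pi}}{\frac12\sqrt{\pi}}=2$, which together with $2^{n/2-2\mu}=2$ and $\pi^{n/2}=\pi$ yields $\frac{2}{\pi\cdot 2}=\frac1{\pi}$. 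Plugging this in reproduces the stated identity, which completes the proof. There is no genuine obstacle here: all the analytic content has already been established in Proposition \ref{gramm2}, and the only point requiring care is the evaluation of the beta-function factor.
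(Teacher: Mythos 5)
Your proposal is correct and is exactly the paper's route: the corollary is stated there as the specialization $n=2$, $\mu=0$ of Proposition \ref{gramm2}, and your verification of the hypotheses and of the constant via $B\bigl(1,\tfrac12\bigr)=2$ matches what the paper leaves implicit.
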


\smallskip

There is another natural way to view \eqref{grambern}. For arbitrary $p>0$ and $a>0$ consider
the Whittle--Mat\'ern function (cf. \eqref{3.21})
 \begin{equation}\label{3.27}
 M_{p,a}(r):=\biggl(\frac{r}{a}\biggr)^p\,K_p(ar), \quad r>0.
  \end{equation}
Since $K_{-p}=K_p$, the notation makes sense for negative indices,
and another family of the Whittle--Mat\'ern functions comes in
$$ \widetilde M_{p,a}(r)=M_{-p,a}(r)= \biggl(\frac{a}{r}\biggr)^p\,K_p(ar)  , \qquad p>0, \qquad \widetilde M_{0,a}(r)=K_0(r). $$
Then equality \eqref{grambern} with $0<2p\le n$ reads
\begin{equation}
\begin{split}
M_{p,a}(|\xi-\eta|) &=\langle c_{n,p}\widetilde M_{d,a}(|\cdot\,-\xi|), \,c_{n,p}\widetilde M_{d,a}(|\cdot\,-\eta|)\rangle_{L^2(\R^n)}, \\
0\le d& :=\frac12\biggl(\frac{n}2-p\biggr)<\frac{n}4\,,\quad c_{n,p}^2 =\frac{2^p}{\pi^{\frac{n}2}\,B\bigl(d, \frac12\bigr)}
\end{split}
\end{equation}
for all $\xi,\eta\in\R^n$.

To have a proper normalization at the origin we put (see \eqref{asymbes} and \eqref{3.21})
\begin{equation*}
M_p(r)=\frac{M_{p,1}(r)}{2^{p-1}\Gamma(p)}=\frac{r^pK_p(r)}{2^{p-1}\Gamma(p)}=1+O(r^2), \quad r\to 0.
\end{equation*}
As a byproduct of Proposition \ref{gramm2} we have (cf. \cite{gutgne}, \cite[Table 2]{gn2013}).
\begin{corollary}\label{besshoen}
$M_p\in\Phi_\infty$ for all $p>0$.
\end{corollary}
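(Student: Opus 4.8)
The plan is to read off the membership $M_p\in\Phi_\infty$ directly from the grammization already proved in Proposition \ref{gramm2}, rather than producing by hand a Gaussian scale-mixture representation \eqref{Scoenberg_alpha-2}. The point is that identity \eqref{grambern}, whose left-hand side is precisely $M_{p,a}(|\xi-\eta|)=(|\xi-\eta|/a)^p K_p(a|\xi-\eta|)$, exhibits the kernel $M_{p,a}(|\xi-\eta|)$ as a positive constant times an $L^2(\R^n)$ inner product of shifts $f_{a,\mu,\xi}$. Consequently, for every finite collection $\{\xi_1,\dots,\xi_m\}\subset\R^n$ the Schoenberg matrix $\|M_{p,a}(|\xi_i-\xi_j|)\|_{i,j=1}^m$ is a genuine Gram matrix, hence nonnegative, which is exactly the defining inequality \eqref{positiv} for $M_{p,a}\in\Phi_n$.

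First I would fix $p>0$ and restrict attention to integers $n$ with $n\ge2$ and $n\ge 2p$. Setting $\mu:=\tfrac{n}{4}-\tfrac{p}{2}$ one checks that $0\le\mu<n/4$ and $p=\tfrac n2-2\mu>0$, so Proposition \ref{gramm2} applies and \eqref{grambern} holds for this choice of $\mu$. Taking $a=1$, the resulting Gram representation shows that $\|M_{p,1}(|\xi_i-\xi_j|)\|$ is nonnegative for arbitrary finite point sets in $\R^n$; since $M_p=M_{p,1}/(2^{p-1}\Gamma(p))$ differs from $M_{p,1}$ only by a positive scalar, the same is true for $M_p$. It remains to verify that $M_p$ is a legitimate element of $\Phi_n$: it is real-valued and continuous on $(0,\infty)$, and by the small-argument asymptotics \eqref{asymbes} one has $r^pK_p(r)\to 2^{p-1}\Gamma(p)$ as $r\to0$, so $M_{p,1}$ extends continuously to the origin and the normalization gives $M_p(0)=1$. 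Hence $M_p\in\Phi_n$ for every integer $n$ with $n\ge\max(2,2p)$.

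Finally I would invoke the nestedness $\Phi_{n+1}\subset\Phi_n$. Because the sequence $\{\Phi_n\}_{n\in\N}$ is decreasing, membership in a single $\Phi_N$ with $N$ large already forces membership in all $\Phi_m$ with $m\le N$; thus $M_p\in\Phi_n$ for every $n\in\N$, that is $M_p\in\bigcap_{n}\Phi_n=\Phi_\infty$, as claimed. There is essentially no remaining analytic difficulty, since the only nontrivial computation—the Fourier--Bessel evaluation behind \eqref{grambern}—was carried out in Proposition \ref{gramm2}; the one subtlety worth stating explicitly is that that proposition yields $M_p\in\Phi_n$ only in the range $2p\le n$, and the passage to arbitrary fixed $p>0$ works precisely because the classes $\Phi_n$ shrink with $n$, so having the property for all sufficiently large $n$ is enough.
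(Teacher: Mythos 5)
Your proof is correct and follows essentially the same route as the paper: both use the grammization identity \eqref{grambern} of Proposition \ref{gramm2} to conclude that the Schoenberg matrices of $M_p$ are Gram matrices, hence nonnegative, giving $M_p\in\Phi_n$ for all $n\ge 2p$, and then pass to $\Phi_\infty$ via the nestedness $\Phi_{n+1}\subset\Phi_n$. Your version merely makes explicit two points the paper leaves tacit — the choice $\mu=\tfrac n4-\tfrac p2$ and the continuity/normalization of $M_p$ at the origin — which is fine.
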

\begin{proof}
Take $n>2p$. By Proposition \ref{gramm2}, for each finite set $X\subset\R^n$ the Schoenberg matrix $\kS_X(M_p)$ is the Gramm matrix,
so $\kS_X(M_p)\ge0$. Hence $M_p\in\Phi_n$ for all such $n$, as claimed.
\end{proof}

With regard to Corollary \ref{besshoen} one might ask whether the functions $M_p$ belong to certain subclasses of
$\Phi_\infty$, for instance, to the class $CM_0(\R_+)$ of completely monotone functions.
The result below seems interesting on its own.

\begin{proposition}
For the Whittle--Mat\'ern function $M_p$ the following statements hold.
\begin{itemize}
  \item [\em (i)] $M_p\in CM(\R_+)$ if and only if $-\infty<p\le 1/2$.
  \item [\em (ii)] $M_p\in CM_0(\R_+)$ if and only if $0<p\le 1/2$.
\end{itemize}

\end{proposition}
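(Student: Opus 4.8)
The plan is to work throughout with the unnormalized function $g_p(r):=r^pK_p(r)$, which for $p>0$ is the positive multiple $2^{p-1}\Gamma(p)\,M_p(r)$ of $M_p$, and which for $p\le 0$ we take as the (positively normalized) representative of $M_p$; since complete monotonicity is insensitive to multiplication by a positive constant, statement (i) is equivalent to the assertion that $g_p\in CM(\R_+)$ iff $p\le 1/2$. Note first that $g_p>0$ on $(0,\infty)$, because $K_p(r)=K_{|p|}(r)>0$. I would prove the two directions separately and then read off (ii) from the behaviour of $g_p$ at the origin.

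For sufficiency ($p\le 1/2\Rightarrow g_p\in CM$) the key trick is to apply the integral representation \eqref{modbess} not to $K_p$ but to $K_{-p}=K_p$, with $\mu=-p$. This is legitimate precisely when $-p>-1/2$, i.e. $p<1/2$, and it gives $g_p(z)=z^pK_{-p}(z)=\frac{\sqrt\pi}{\Gamma(\frac12-p)}\,z^p(z/2)^{-p}\int_1^\infty e^{-zt}(t^2-1)^{-p-1/2}\,dt$. The crucial point is that the prefactor $z^p(z/2)^{-p}=2^p$ is independent of $z$, so $g_p(z)=\frac{\sqrt\pi\,2^p}{\Gamma(\frac12-p)}\int_1^\infty e^{-zt}(t^2-1)^{-p-1/2}\,dt$ is literally the Laplace transform of the nonnegative measure $d\tau(t)=\frac{\sqrt\pi\,2^p}{\Gamma(\frac12-p)}(t^2-1)^{-p-1/2}\mathbf 1_{(1,\infty)}(t)\,dt$ (the exponent $-p-1/2>-1$ guarantees integrability at $t=1$, again exactly when $p<1/2$). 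By the Bernstein--Widder theorem \eqref{bernstein}, $g_p\in CM(\R_+)$ for every $p<1/2$. The borderline $p=1/2$ is handled directly: $K_{1/2}(r)=\sqrt{\pi/(2r)}\,e^{-r}$, so $g_{1/2}(r)=\sqrt{\pi/2}\,e^{-r}\in CM(\R_+)$.

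For necessity ($p>1/2\Rightarrow g_p\notin CM$) I would use the standard differentiation formula $\frac{d}{dr}[r^\nu K_\nu(r)]=-r^\nu K_{\nu-1}(r)$, which gives $-g_p'(r)=r^pK_{p-1}(r)>0$ for $r>0$. If $g_p$ were completely monotone, then $-g_p'$ would be completely monotone too, hence nonnegative and non-increasing on $(0,\infty)$, so it would attain its supremum at $0^+$. But the small-argument asymptotics \eqref{asymbes} for $K_{p-1}=K_{|p-1|}$ show that $r^pK_{p-1}(r)\to 0$ as $r\to 0^+$ (the governing exponent is $1$ for $p\ge 1$, is $2p-1>0$ for $1/2<p<1$, and the case $p=1$ carries only an extra logarithm), so $\sup_{r>0}(-g_p')=0$. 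A non-increasing nonnegative function with supremum $0$ vanishes identically, forcing $g_p$ to be constant, which is false. Hence $-g_p'\notin CM(\R_+)$ and therefore $g_p\notin CM(\R_+)$, completing (i).

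Finally, (ii) follows from (i) together with the origin asymptotics in \eqref{asymbes}: complete monotonicity already forces $p\le 1/2$, and the extra requirement $f(+0)=1$ in the definition of $CM_0(\R_+)$ demands that $g_p(0^+)$ be finite with the correct positive normalization. For $p>0$, \eqref{asymbes} gives $g_p(r)=r^pK_p(r)\to 2^{p-1}\Gamma(p)$, whence $M_p(+0)=g_p(0^+)/(2^{p-1}\Gamma(p))=1$; for $p=0$ one has $g_0=K_0(r)\to+\infty$, and for $p<0$, $g_p(r)\sim\tfrac12\Gamma(|p|)2^{|p|}r^{-2|p|}\to+\infty$, so the normalization $f(+0)=1$ fails. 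Thus $M_p\in CM_0(\R_+)$ exactly for $0<p\le 1/2$. The one genuinely delicate point is the sufficiency argument: recognizing that one must feed $K_{-p}$ rather than $K_p$ into \eqref{modbess} so that the powers of $z$ cancel and a true exponential mixture emerges, and then observing that the validity of \eqref{modbess} and the integrability of the resulting density both cut off at the single threshold $p=1/2$.
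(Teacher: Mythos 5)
Your proof is correct, and its two pillars coincide with the paper's: for $p<1/2$ you feed $K_{-p}=K_p$ into the representation \eqref{modbess} (this is exactly the paper's instruction ``put $\nu=-p$''), so the powers of $z$ cancel and Bernstein--Widder \eqref{bernstein} applies, and you settle the borderline case via $M_{1/2}(r)=e^{-r}$, just as the paper does. Where you genuinely diverge is the necessity direction $p>1/2$. The paper splits it into three cases and mixes two criteria: for $p\ge2$ and for $1<p<2$ it uses the second derivative, computing from \eqref{differentiation} that $M_{p,1}''(r)=-M_{p-1,1}(r)+r^2M_{p-2,1}(r)$ and hence $M_{p,1}''(+0)=-2^{p-2}\Gamma(p-1)<0$, violating \eqref{derivativescm} with $k=2$; only for $1/2<p<1$ does it use the first derivative ($M_{p,1}'(+0)=0$ is impossible for a nonconstant completely monotone function). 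You instead run the first-derivative argument uniformly: $-g_p'(r)=r^pK_{p-1}(r)$ is strictly positive on $(0,\infty)$, yet tends to $0$ as $r\to0^+$ for every $p>1/2$, while complete monotonicity of $g_p$ would force $-g_p'$ to be completely monotone, hence non-increasing, hence identically zero --- a contradiction. This buys two things: a single argument covering the whole range $p>1/2$, and in particular coverage of $p=1$, which falls between the paper's cases $1<p<2$ and $1/2<p<1$ and is formally skipped there (your logarithmic asymptotics $rK_0(r)\to0$ handle it). Finally, your derivation of part (ii) from the origin asymptotics \eqref{asymbes} is a slightly more explicit route than the paper's remark that the Bernstein measure is finite if and only if $0<p<1/2$; both are correct.
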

\begin{proof} The assertion for $-\infty<p<1/2$ follows directly from the second integral representation \eqref{modbess}
and the Bernstein theorem, if one puts $\nu=-p$. Note that the Bernstein measure is finite if and only if $0<p<1/2$.
For $p=1/2$ we have
$$ M_{1/2}(r)=e^{-r}\in CM_0(\R_+). $$

Let now $p>1/2$. We wish to show that inequalities \eqref{derivativescm} are violated for some $k\ge1$. The argument relies on
the differentiation formulae for the Bessel functions, which in our notation look as (see \cite[p.74]{Wat})
\begin{equation}\label{differentiation}
\left(\frac1{z}\,\frac{d}{dz}\right)^m\, M_{p,1}(z)=(-1)^mM_{p-m,1}(z).
\end{equation}
For $m=1$ it displays the fact that $M_{p,1}$ is monotone decreasing function on $\R_+$. For $m=2$ we have
$$ M_{p,1}''(r)=-M_{p-1,1}(r)+r^2M_{p-2,1}(r). $$

For $p\ge2$ obviously $r^2M_{p-2,1}\to 0$ as $r\to +0$, so $M_{p,1}''(+0)=-2^{p-2}\Gamma(p-1)<0$, which is inconsistent with
\eqref{derivativescm} for $k=2$. If $1<p<2$, then again
$$ r^2M_{p-2,1}(r)=r^pK_{2-p,1}(r)=r^{2p-2}M_{2-p,1}(r)\to 0, \qquad r\to +0, $$
with the same conclusion.

Finally, let $1/2<p<1$. From \eqref{differentiation} with $m=1$ one has
$$ M_{p,1}'(r)=-rM_{p-1,1}(r)=-r^pK_{1-p}(r)=-r^{2p-1}M_{1-p,1}(r)\to 0, \qquad r\to +0 $$
so $M_{p,1}'(+0)=0$ that is impossible for a nonconstant completely monotone function. The proof is complete.
\end{proof}

\begin{remark}
For $0\le p\le1/2$ a stronger result is proved in \cite{milsam}, namely, $e^rM_{p,1}(r)\in CM(\R_+)$. Our results for the other values
of $p$ seem to be new.
\end{remark}

\subsection{Minimality conditions and Riesz sequences in $L^2(\R^n)$}\label{rieszsequences}

The classical result of Bari (see, e.g., \cite[Theorem 6.2.1]{GK69}, \cite[p.170]{Nik2}) states that a sequence $\{\varphi_k\}_{k\in\N}$
of vectors in a Hilbert space is a Riesz sequence if and only if the corresponding Gramm matrix $Gr\{\varphi_k\}_{k\in\N}$
generates a bounded and invertible linear operator on $\ell^2$.
We examine here certain systems of shifted functions from this viewpoint.

The definitions below are standard (cf. \cite[Chapter VI]{GK69}).
\begin{definition}\label{def_Minimality}
A sequence of vectors $\{f_j\}_{j\in\N}$ in a Hilbert space $\kH$ is
called {\it minimal}, if neither of $f_k$ belongs to the closed
linear span $\kL(\{f_j\}_{j\not=k})$ of the others. In other words,
  \begin{equation*}
\delta_k:=\dist(f_k/\|f_k\|, \kL(\{f_j\}_{j\not=k}))>0, \qquad k\in\N.
  \end{equation*}
$\{f_j\}_{j\in\N}$ is {\it uniformly minimal}, if
$\inf_k\delta_k>0$.
\end{definition}

Recall that Riesz--Fischer systems are defined in \eqref{rieszfischerse}.
  \begin{lemma}\label{lem-Unif_Minimality}
Any Riesz--Fischer sequence $\{f_j\}_{j\in \N}$ is uniformly minimal.
   \end{lemma}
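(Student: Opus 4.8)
The plan is to read uniform minimality directly off the Riesz--Fischer lower bound $\eqref{rieszfischerse}$, by estimating, uniformly in $k$, the distance from $f_k$ to the closed linear span of the remaining vectors. First I would note that the single-index case $m=1$ of $\eqref{rieszfischerse}$ already gives $\|f_k\|^2\ge c>0$ for every $k$, so each $f_k$ is nonzero and the normalized vectors $f_k/\|f_k\|$ appearing in Definition \ref{def_Minimality} are well defined.

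Next I would estimate $\dist(f_k,\kL(\{f_j\}_{j\neq k}))$ from below. Since finite linear combinations of $\{f_j\}_{j\neq k}$ are dense in the closed span $\kL(\{f_j\}_{j\neq k})$, it suffices to bound $\|f_k-\sum_{j\neq k}\xi_j f_j\|$ over all finitely supported families $\{\xi_j\}_{j\neq k}$. Viewing this vector as the single combination $\sum_j\eta_j f_j$ with $\eta_k=1$ and $\eta_j=-\xi_j$ for $j\neq k$, inequality $\eqref{rieszfischerse}$ gives
\begin{equation*}
\Big\|f_k-\sum_{j\neq k}\xi_j f_j\Big\|^2\ge c\Big(1+\sum_{j\neq k}|\xi_j|^2\Big)\ge c,
\end{equation*}
and passing to the infimum yields the uniform bound $\dist(f_k,\kL(\{f_j\}_{j\neq k}))\ge\sqrt c$ for all $k\in\N$.

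Finally, since $\kL(\{f_j\}_{j\neq k})$ is a subspace, scaling the first slot of the distance by $\|f_k\|^{-1}$ rescales the distance by the same factor, so
\begin{equation*}
\delta_k=\dist\!\big(f_k/\|f_k\|,\,\kL(\{f_j\}_{j\neq k})\big)=\frac{\dist(f_k,\kL(\{f_j\}_{j\neq k}))}{\|f_k\|}\ge\frac{\sqrt c}{\|f_k\|},
\end{equation*}
whence $\inf_k\delta_k>0$ and $\{f_j\}_{j\in\N}$ is uniformly minimal. The step I expect to require the most care is precisely this last normalization, where the magnitudes $\|f_k\|$ enter: the Riesz--Fischer inequality controls the unnormalized gap $\dist(f_k,\kL(\{f_j\}_{j\neq k}))$ cleanly, and I would want to be sure the passage to the normalized quantity $\delta_k$ is harmless. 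In every application of the lemma in this paper the relevant system is $\kF_X(f)=\{f(\cdot-x_j)\}$, a family of shifts of a single $f\in L^2(\R^n)$, for which $\|f(\cdot-x_j)\|=\|f\|$ is the same for all $j$; there the constant factor $\|f_k\|^{-1}=\|f\|^{-1}$ makes the estimate $\delta_k\ge\sqrt c/\|f\|$ uniform at once.
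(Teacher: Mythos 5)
Your core estimate is exactly the paper's: apply \eqref{rieszfischerse} to the single combination with coefficient $1$ in the $k$-th slot to get $\bigl\|f_k-\sum_{j\neq k}\xi_j f_j\bigr\|^2\ge c\bigl(1+\sum_{j\neq k}|\xi_j|^2\bigr)\ge c$, hence $\dist\bigl(f_k,\kL(\{f_j\}_{j\neq k})\bigr)\ge\sqrt c$ uniformly in $k$. (The paper's display shows $c\,\bigl(c+\sum_{k\neq j}|\xi_k|^2\bigr)$, evidently a slip for $c\,\bigl(1+\sum_{k\neq j}|\xi_k|^2\bigr)$; the conclusion is unaffected.)

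The normalization step you flag is, however, more than a point of care: it is a genuine gap in the lemma as literally stated, and the paper's own proof passes over it in silence, concluding uniform minimality directly from the unnormalized bound even though Definition \ref{def_Minimality} defines $\delta_k$ via $f_k/\|f_k\|$. Your estimate $\delta_k\ge\sqrt c/\|f_k\|$ yields $\inf_k\delta_k>0$ only when $\sup_k\|f_k\|<\infty$, and the Riesz--Fischer property alone provides no such upper bound. Indeed, in $\ell^2$ with orthonormal basis $\{e_k\}_{k\ge0}$ put $f_k:=e_k+k\,e_0$ for $k\ge1$; then $\|\sum_k\xi_kf_k\|^2=\sum_k|\xi_k|^2+|\sum_k k\,\xi_k|^2\ge\sum_k|\xi_k|^2$, so \eqref{rieszfischerse} holds with $c=1$, yet $e_0=\lim_{j\to\infty}j^{-1}f_j$ belongs to $\kL(\{f_j\}_{j\neq k})$ for every $k$, hence so does each $e_j$ with $j\neq k$, and one computes $\delta_k=(1+k^2)^{-1/2}\to0$: this sequence is minimal but not uniformly minimal in the normalized sense of Definition \ref{def_Minimality}, even though the unnormalized gaps $\dist(f_k,\kL(\{f_j\}_{j\neq k}))$ equal $1$ uniformly. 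So the lemma becomes correct under the additional hypothesis $\sup_k\|f_k\|<\infty$ --- in particular for the shift systems $\kF_X(f)$ of a fixed $f\in L^2(\R^n)$, where $\|f_j\|\equiv\|f\|_{L^2(\R^n)}$, which covers every application made of the lemma in the paper. On this point your proposal, which isolates the difficulty and resolves it precisely where the lemma is used, is more scrupulous than the paper's own proof.
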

\begin{proof}
It is clear that a Riesz--Fischer sequence is bounded from below, that is, $\|f_j\| \ge c$, $j\in\N$.
By Definition \ref{def4.1}(i), (see \eqref{rieszfischerse}), for any fix $j$ and any finite sequence
$\{\xi_{k}\}\subset \C$
   \begin{equation}
\bigg\|\sum_{k\not = j}\xi_{k}f_k - f_j\bigg\|^2   \geq  c~ \left(c
+ \sum_{k\not =j} |\xi_k|^2\right) \ge c^2,
  \end{equation}
so by Definition \ref{def_Minimality} $\{f_j\}_{j\in \N}$ is uniformly minimal, as claimed.
\end{proof}

Given a function $f\in L^2(\R^n)$ and a set
$X=\{x_j\}_{j\in\N}\subset\R^n$, consider a sequence of the shifted
functions $\kF_X(f)=\{f(\cdot-x_j)\}_{j\in\N}$. Denote
$f_j(\cdot)=f(\cdot-x_j)$.
%
%
   \begin{proposition}[=Proposition \ref{Intro prop4.21}]\label{prop4.21}
Let $f\in L^2(\R^n)$ be a real-valued and radial function such that
$\widehat f\not=0$ a.e. Then the following statements are equivalent.
\begin{itemize}
  \item [\em (i)]   $\kF_X(f)$  forms a Riesz--Fischer  sequence in $L^2(\R^n);$
  \item [\em (ii)] $\kF_X(f)$ is uniformly minimal in $L^2(\R^n);$
  \item [\em (iii)]  $X$ is a separated set, i.e., $d_*(X)>0$.
\end{itemize}
     \end{proposition}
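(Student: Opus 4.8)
The plan is to prove the cycle (iii) $\Rightarrow$ (i) $\Rightarrow$ (ii) $\Rightarrow$ (iii), with the only substantial step being (iii) $\Rightarrow$ (i). Throughout write $f_j=f(\cdot-x_j)$ and recall from Lemma \ref{lem4.11} that $\langle f_k,f_j\rangle_{L^2(\R^n)}=\widehat F(x_k-x_j)$ with $F=(2\pi)^{n/2}|\widehat f|^2$, so that the Gram matrix of $\kF_X(f)$ coincides with the Schoenberg matrix $\kS_X(g)$ for the radial symbol $g:=\widehat F$. Expanding the norm and applying the computation \eqref{equrfbs} with Bochner measure $\mu=|\widehat f|^2\,dt$ gives the key identity
$$
\Bigl\|\sum_{k=1}^m\xi_k f_k\Bigr\|_{L^2(\R^n)}^2
=\sum_{k,j=1}^m g(x_k-x_j)\,\xi_k\overline{\xi_j}
=\int_{\R^n}\Bigl|\sum_{k=1}^m\xi_k\,e^{i(x_k,t)}\Bigr|^2|\widehat f(t)|^2\,dt,
$$
which identifies the Riesz--Fischer property \eqref{rieszfischerse} of $\kF_X(f)$ with the strong $X$-positive definiteness \eqref{stronglyxpositive} of $g$ (equivalently, by Proposition \ref{prop_Riesz_seq_in_space}(i), with the Riesz--Fischer property of $\kE_X$ in $L^2(\R^n,\mu)$).

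For the main implication (iii) $\Rightarrow$ (i) I would first record that $g=\widehat F$ is a genuine radial positive definite function: since $F\in L^1(\R^n)$ and $|\widehat f|^2\ge0$, Bochner's theorem (Theorem \ref{boch}) applied to the nonnegative measure $\mu=|\widehat f|^2\,dt$ shows $g\in\Phi_n$. The crucial point is then to identify its representing measure. From $\nu\{[0,r]\}=\mu\{|t|\le r\}=\int_{|t|\le r}|\widehat f(t)|^2\,dt$ and passage to polar coordinates, the density of $\nu$ is proportional to $|\widehat f|^2(\rho)\,\rho^{n-1}$, which is strictly positive for a.e. $\rho>0$ precisely because $\widehat f\ne0$ a.e. Hence $\nu$ is equivalent to Lebesgue measure on $\R_+$, and Theorem \ref{propositionstronglydef} yields that $g$ is strongly $X$-positive definite for every separated $X\in\kX_n$; by the displayed identity this is exactly (i).

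The two remaining implications are routine. (i) $\Rightarrow$ (ii) is immediate from Lemma \ref{lem-Unif_Minimality}. For (ii) $\Rightarrow$ (iii) I would argue by contraposition: if $d_*(X)=0$ there are indices $i_m\ne j_m$ with $|x_{i_m}-x_{j_m}|\to0$, and strong $L^2$-continuity of translations gives $\|f_{i_m}-f_{j_m}\|\to0$. Since $\|f_l\|=\|f\|>0$ for all $l$ (note $\widehat f\ne 0$ a.e. forces $f\ne0$) and $f_{j_m}$ lies in the closed span $\kL(\{f_l\}_{l\ne i_m})$ of the remaining vectors, we get $\delta_{i_m}\le\|f_{i_m}-f_{j_m}\|/\|f\|\to0$, so $\kF_X(f)$ is not uniformly minimal.

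The main obstacle is the uniform lower bound in (iii) $\Rightarrow$ (i): separatedness alone does not make an arbitrary RPDF strongly $X$-positive definite, and the argument hinges entirely on the observation that here the symbol $g=\widehat F$ has a representing measure equivalent to Lebesgue, so that the harmonic-analysis machinery of Theorem \ref{propositionstronglydef} (and, behind it, Proposition \ref{rieszfisher}) applies. A secondary subtlety is that Theorem \ref{propositionstronglydef} is stated for $n\ge2$; the case $n=1$, where $S^{n-1}$ degenerates to two points and the sphere argument breaks down, must be handled separately, e.g. by invoking the classical fact that a separated real sequence yields an exponential Riesz--Fischer system on a sufficiently long interval (Ingham / Plancherel--P\'olya), applied on a set where $|\widehat f|^2$ is bounded below.
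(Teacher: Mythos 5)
Your main argument is correct for $n\ge2$ and coincides with the paper's own proof: (iii)$\Rightarrow$(i) identifies, via Lemma \ref{lem4.11}, the Gram matrix of $\kF_X(f)$ with the Schoenberg matrix $\kS_X(\widetilde F_0)$, observes that the Bochner measure $(2\pi)^{-n/2}F\,dt$ of $\widehat F$ --- hence the Schoenberg measure $\nu$ of $\widetilde F_0$ --- is equivalent to Lebesgue measure precisely because $\widehat f\not=0$ a.e., and then invokes Theorem \ref{propositionstronglydef}; (i)$\Rightarrow$(ii) is the same appeal to Lemma \ref{lem-Unif_Minimality}; and your contrapositive (ii)$\Rightarrow$(iii) via $L^2$-continuity of translations is a harmless variant of the paper's identity $1-\widehat F(|x_j-x_k|)=\tfrac12\|f_j-f_k\|^2$ combined with continuity of $\widehat F$.

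The one place where you go beyond the paper is also where you go wrong. You are right to flag that Theorem \ref{propositionstronglydef} requires $n\ge2$ (the paper invokes it silently, even though its convention allows $n=1$), but your proposed repair for $n=1$ --- Ingham/Plancherel--P\'olya ``applied on a set where $|\widehat f|^2$ is bounded below'' --- cannot succeed, because the implication (iii)$\Rightarrow$(i) is \emph{false} in $\R^1$. Take $X=\Z_+$, so that by Plancherel
\begin{equation*}
\Bigl\|\sum_{k}\xi_k f(\cdot-k)\Bigr\|^2_{L^2(\R)}=\int_0^{2\pi}\Bigl|\sum_{k}\xi_k e^{-ikt}\Bigr|^2\,W(t)\,dt,
\qquad W(t):=\sum_{m\in\Z}|\widehat f(t+2\pi m)|^2 .
\end{equation*}
Thus the Riesz--Fischer property is a uniform lower bound for Toeplitz forms with symbol $W\in L^1(\T)$, and the best constant equals $2\pi\,\mathop{\rm ess\,inf}W$: the modulation $P\mapsto e^{iNt}P$ reduces trigonometric to analytic polynomials with the same modulus, and the infimum of $\int W|P|^2\,dt\bigl/\int|P|^2\,dt$ over trigonometric polynomials is the essential infimum of $W$ (Szeg\H{o}'s theorem on extreme eigenvalues of Toeplitz matrices, or an elementary approximation of the indicator of $\{W<\mathop{\rm ess\,inf}W+\varepsilon\}$). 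Now choose $\widehat f(t)=\sqrt{u(t)\,e^{-t^2}}$ with $u$ even, $2\pi$-periodic, equal to $4^{-k}$ on disjoint sets $A_k\subset(0,\pi)$ of positive measure and to $1$ elsewhere: then $f$ is real, even, in $L^2(\R)$, $\widehat f\not=0$ a.e., and $W\asymp u$ is positive a.e.\ with $\mathop{\rm ess\,inf}W=0$, so $\kF_X(f)$ is \emph{not} a Riesz--Fischer sequence although $d_*(X)=1$. This also pinpoints the flaw in the Ingham idea: Ingham bounds $\int_{-T}^{T}|P|^2dt$ from below, but restricting to $E_\delta=\{|\widehat f|^2\ge\delta\}$ would require the exponentials $\kE_X$ to be Riesz--Fischer in $L^2(E_\delta)$ for an arbitrary set of positive measure, and separateness of frequencies does not give this --- exponential sums can concentrate almost all of their $L^2$ mass on the complement of $E_\delta$. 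The correct conclusion is that the proposition must be read with $n\ge2$, where the spheres $S_r^{n-1}$ are infinite and the machinery of Proposition \ref{rieszfisher} applies; the case $n=1$ is not a gap to be patched but a genuine failure of the statement.
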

   \begin{proof}
Implication (i)$\Rightarrow$(ii) is immediate from Lemma  \ref{lem-Unif_Minimality}.

(ii)$\Rightarrow$(iii).  With no loss of generality we can assume
that $\|f\|_{L^2(\R^n)}=1$, so $\|f_j\|_{L^2(\R^n)}=1$ for all
$j\in\N$. The normalization in \eqref{genergram} shows that
$\widehat F(0)= \|f\|^2_{L^2(\R^n)} =1$.

Let $\kF_X(f)$ be uniformly minimal. Then there exists $\varepsilon>0$ such that $\|f_j-f_k\|^2\ge 2\varepsilon$ for all
$j\not = k\in\Bbb N$. A combination of the latter inequality with identity \eqref{genergram} yields
  \begin{equation} 1- \widehat F(|x_j-x_k|) =
1- \langle f_j,f_k\rangle_{L^2(\R^n)} = \frac{\|f_j-f_k\|^2}2 \ge
\varepsilon, \quad j,k \in \N,
    \end{equation}
and so $d_*(X)>0$ follows.

(iii)$\Rightarrow$(i). Let $d_*(X)>0$. As all functions in question are radial, we put
\begin{equation}\label{radial1}
F(t)=(2\pi)^{n/2}|\widehat f(t)|^2 = F_0(|t|), \qquad \widehat F(t)=\widetilde F_0(|t|).
\end{equation}
Clearly,  $F\ge 0$ a.e. on $\R^n$ and $F\in L^1(\R^n)$ since $f\in
L^2(\R^n)$. Hence, by the inversion formula,
$$
\widehat{F}(\xi) =
(2\pi)^{-n/2}\int\nolimits_{\R^n}e^{-i(t,\xi)}F(t)dt=(2\pi)^{-n/2}\int\nolimits_{\R^n}e^{i(t,\xi)}F(t)dt,
$$
so $\widehat{F}$ is a radial positive definite function, i.e.,
$\widetilde F_0\in\Phi_n$. We see that the measure $\mu =
\mu_{\widehat{F}}$ from the Bochner representation \eqref{bochequa}
of $\widehat{F}$ is absolutely continuous, $\mu_{\widehat{F}}=
(2\pi)^{-n/2}F\,dt$. Moreover, the condition $\widehat f\not=0$ a.e.
implies $F>0$ a.e. on $\R^n$, that is, $\mu_{\widehat{F}}$ is
equivalent to the Lebesgue measure $dt$ on $\R^n$. Hence, the
representing Schoenberg measure $\nu = \nu_{\widetilde F_0}$ from
\eqref{schoenberg1} is equivalent to the Lebesgue measure on $\R_+$
due to the relation $\nu\{[0,r]\}=\mu\{|x|\le r\}$ between $\nu$ and
$\mu$. Thereby the conditions of Theorem
\ref{propositionstronglydef} are met and the function $\widetilde
F_0$ is strongly $X$-positive definite.
By Lemma \ref{lem4.11} (see identity \eqref{genergram}) and
Definition \ref{def-X-posit_definit} of strongly $X$-positive
definite functions, the latter amounts to saying that
$\kF_X(f)$ is the Riesz--Fischer system. The proof is complete.
   \end{proof}
Under certain additional assumptions on $f$ we come to Riesz sequences of the shifted functions.
      \begin{theorem}[=Theorem \ref{theorem3.25}]\label{mainriesz}
Let $f\in L^2(\R^n)$ be a real-valued and radial function such that
its Fourier transform $\widehat f\not=0$ a.e.. Let $F$ and $F_0$ be defined in  \eqref{radial1}
and assume that for some majorant $h\in\kM_+$ $\eqref{function}$ the relations
\begin{equation}\label{majorant}
|\widetilde F_0(s)|\le h(s), \qquad s^{n-1} h(s)\in L^1(\R_+)
\end{equation}
hold. Then the following statements are equivalent.
\begin{itemize}
  \item [\em (i)]   $\kF_X(f)$  forms a Riesz sequence in $L^2(\R^n);$
  \item [\em (ii)]  $\kF_X(f)$ forms a basis in its linear span$;$
  \item [\em (iii)] $\kF_X(f)$ is uniformly minimal in $L^2(\R^n);$
  \item [\em (iv)]  $X$ is a separated set, i.e., $d_*(X)>0$.
\end{itemize}
\end{theorem}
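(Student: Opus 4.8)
The plan is to run the cycle of implications $(i)\Rightarrow(ii)\Rightarrow(iii)\Leftrightarrow(iv)\Rightarrow(i)$, exploiting that by Lemma \ref{lem4.11} (identity \eqref{genergram}) the Gram matrix of $\kF_X(f)$ is literally a Schoenberg matrix,
\[ Gr(\kF_X(f), L^2(\R^n)) = \|\langle f_j, f_k\rangle\|_{j,k\in\N} = \|\widetilde F_0(|x_j-x_k|)\|_{j,k\in\N} = \kS_X(\widetilde F_0), \]
where, as established in the proof of Proposition \ref{prop4.21}, $\widetilde F_0\in\Phi_n$ with representing measure equivalent to the Lebesgue measure on $\R_+$. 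With this identification the equivalence $(iii)\Leftrightarrow(iv)$ is nothing but Proposition \ref{prop4.21} and requires no new argument, while $(i)\Rightarrow(ii)$ is immediate: by Definition \ref{def4.1}(iii) a Riesz sequence is a Riesz basis in its linear span, hence a fortiori a basis in its linear span.

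The substance lies in $(iv)\Rightarrow(i)$, and this is where the majorant hypothesis \eqref{majorant} enters. Assume $d_*(X)>0$. First, Proposition \ref{prop4.21} (its implication (iii)$\Rightarrow$(i)) already furnishes the Riesz--Fischer lower bound \eqref{rieszfischerse}. It remains to produce the Bessel bound \eqref{besselse}, which by the Gram identity is exactly the boundedness of $S_X(\widetilde F_0)$ on $\ell^2$. Here I would invoke the majorant: writing $d=\dim\kL(X)\le n$, the assumption $s^{n-1}h(s)\in L^1(\R_+)$ forces $s^{d-1}h(s)\in L^1(\R_+)$ as well (at infinity because $s^{d-1}\le s^{n-1}$ there, near the origin because $h\le h(0)=1$ and $d\ge1$). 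Thus $h\in\kM_+$ meets the hypothesis of Theorem \ref{bochbound}(i) in dimension $d$, so $S_X(h)$ is bounded; since $|\widetilde F_0|\le h$ by \eqref{majorant}, the domination Proposition \ref{nonmonoton} gives boundedness of $S_X(\widetilde F_0)$, i.e.\ the Bessel property. Being simultaneously Riesz--Fischer and Bessel, $\kF_X(f)$ is a Riesz sequence by Definition \ref{def4.1}(iii). (Equivalently, boundedness together with the invertibility coming from the strong $X$-positive definiteness of $\widetilde F_0$, Theorem \ref{propositionstronglydef}, yields the conclusion through the Bari theorem.)

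To close the cycle I would record $(ii)\Rightarrow(iii)$, a standard functional-analytic fact: if $\{f_j\}$ is a basis in its closed linear span with basis constant $K=\sup_n\|S_n\|$, then its biorthogonal functionals satisfy $\|f_j\|\,\|f_j^*\|\le 2K$, whence $\delta_j=(\|f_j\|\,\|f_j^*\|)^{-1}\ge(2K)^{-1}$ and the system is uniformly minimal in the sense of Definition \ref{def_Minimality}; here all $\|f_j\|=\|f\|$ coincide, so no normalization issue arises. The main obstacle is the Bessel half of $(iv)\Rightarrow(i)$: Proposition \ref{prop4.21} alone delivers only the Riesz--Fischer property, and upgrading it to a genuine Riesz sequence is precisely what the extra decay condition \eqref{majorant} buys, via Theorem \ref{bochbound} and Proposition \ref{nonmonoton}. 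The one technical point to monitor is the reduction from the stated $s^{n-1}h\in L^1$ to the dimension-$d$ integrability $s^{d-1}h\in L^1$ needed when $\dim\kL(X)<n$.
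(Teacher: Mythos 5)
Your proposal is correct and follows essentially the same route as the paper: the Gram identity of Lemma \ref{lem4.11} identifying $Gr(\kF_X(f))$ with $\kS_X(\widetilde F_0)$, Proposition \ref{prop4.21} for the Riesz--Fischer half, and the majorant \eqref{majorant} via Theorem \ref{bochbound} and Proposition \ref{nonmonoton} for the Bessel half, concluding by Bari's theorem (equivalently, Riesz--Fischer plus Bessel). Your two additions --- the explicit biorthogonal-functional argument for (ii)$\Rightarrow$(iii), which the paper dismisses as obvious, and the reduction from $s^{n-1}h\in L^1(\R_+)$ to $s^{d-1}h\in L^1(\R_+)$ when $d=\dim\kL(X)<n$ --- are correct fill-ins of details the paper glosses over, not a different method.
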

\begin{proof}
The implications (i)$\Rightarrow$(ii)$\Rightarrow$(iii) are obvious. The implication (iii)$\Rightarrow$(iv) is proved in Proposition \ref{prop4.21}.

It remains to prove that (iv) implies (i). Lemma \ref{lem4.11} is a key ingredient of the proof. Condition \eqref{genergram} now reads
 \begin{equation}\label{Gram_matrix}
Gr(\{f_j, L^2(\R^n)\}) = \kS_X(\widetilde F_0).
   \end{equation}
In view of the aforementioned theorem of Bari we need to show that
under the hypothesis of Theorem \ref{mainriesz} the Schoenberg
operator $S_X(\widetilde F_0)$ is bounded and invertible.

First, assumption \eqref{majorant} implies the boundedness of
$S_X(\widetilde F_0)$ in view of Proposition \ref{nonmonoton}, and
$\kF_X(f)$ is the Bessel sequence.

Secondly, according to Proposition \ref{prop4.21},   the condition
$\widehat f\not=0$ a.e.  ensures that $\kF_X(f)$ is the Riesz--Fischer
sequence, i.e., the operator $S_X(\widetilde F_0)$ is invertible, as
claimed. Thus, by \eqref{Gram_matrix}  the Gramm matrix $Gr(\{f_j,
L^2(\R^n)\})$ is bounded  and invertible, and the Bari theorem completes the proof.
    \end{proof}
\begin{remark}
One can avoid using the Fourier transform when computing $\widetilde F_0$ from \eqref{radial1} since
\begin{equation}
\widehat F(t)=\int_{\R^n} f(t+y)f(y)dy=\widetilde F_0(|t|).
\end{equation}
\end{remark}

\begin{example}\label{examriesz}
The conditions of Theorem \ref{mainriesz} can be verified for the systems we already encountered in the previous section.
For instance, as we have seen in Proposition \ref{gramm1},
$$ f(x)=e^{-a|x|^2} \Longrightarrow \widetilde F_0(r)=\Bigl(\frac1{4a}\Bigr)^{n/2}\,e^{-\frac{a}2r^2}. $$
Similarly, it is shown in Proposition \ref{gramm2} that
$$ f(x)=\biggl(\frac{a}{|\,x|}\,\biggr)^{\mu}\,K_\mu(a|\,x|), \ \ 0\le\mu<\frac{n}4 \Longrightarrow
\widetilde F_0(r)=\frac{B\bigl(\frac{n}2-\mu,
\frac12\bigr)}{2^{n-2\mu}}\,\biggl(\frac{r}{a}\biggr)^p\,K_p(ar), \
\ p=\frac{n}2-2\mu. $$ Since in both cases $\widetilde
F_0\in\Phi_\infty\subset\kM_+$ (cf. Corollary \ref{besshoen}) and
$\widetilde F_0$ decays exponentially fast (see  \eqref{asymbes}),
Theorem \ref{mainriesz} applies, so $\kF_X(f)$ is the Riesz sequence
for each $X\in\kX_n$.
\end{example}

In view of applications in the spectral theory let us single out two particular cases of the above example.
  \begin{corollary}\label{RieszSeq_exponents}
Let  $\kF_2 = \{K_0(a|\cdot- x_j|\}_{j\in \N}$ and $\kF_3=\left\{\frac{e^{-a|\cdot -x_j|}}{|\cdot - x_j|}\right\}_{j\in \N}$.
Then each of the  sequences $\kF_2$ and $\kF_3$ forms a Riesz sequence in $L^2(\R^2)$ and $L^2(\R^3)$, respectively,
for each $X\in\kX_n$.
  \end{corollary}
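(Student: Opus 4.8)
The plan is to recognize both $\kF_2$ and $\kF_3$ as special cases of the shift-system $\kF_X(f_{a,\mu})$ already treated in Example \ref{examriesz}, and then to quote Theorem \ref{mainriesz} directly. Concretely, for $\kF_2$ I would take $n=2$ and $\mu=0$ in the family \eqref{rieszradial}, so that $f_{a,0}(x)=K_0(a|x|)$ and $\kF_2=\kF_X(f_{a,0})$ exactly; for $\kF_3$ I would take $n=3$ and $\mu=\tfrac12$, in which case \eqref{malshmu1} gives $f_{a,1/2}(x)=\sqrt{\pi/2}\,e^{-a|x|}/|x|$, so that $\kF_3$ coincides with $\kF_X(f_{a,1/2})$ up to the fixed scalar $\sqrt{\pi/2}$. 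Both choices respect the admissibility constraint $0\le\mu<n/4$ of Proposition \ref{gramm2} (here $0<\tfrac12$ and $\tfrac12<\tfrac34$, respectively), and the harmless common scalar for $\kF_3$ does not affect the Riesz-sequence property.

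The first step is to check the standing hypotheses of Theorem \ref{mainriesz}, namely that $f$ is real-valued, radial, lies in $L^2(\R^n)$, and has $\widehat f\neq0$ a.e. Realness and radiality are immediate. Nonvanishing of $\widehat f$ follows from the explicit formula \eqref{furtrans}, whose right-hand side is strictly positive everywhere once $0\le\mu<n/4$. Membership in $L^2$ is precisely where the constraint $\mu<n/4$ is used: by the small-argument asymptotics in \eqref{asymbes}, near the origin $f_{a,0}(x)\sim\log(2/(a|x|))$ in $\R^2$ and $f_{a,1/2}(x)\sim\mathrm{const}/|x|$ in $\R^3$, both of which are square-integrable against $r^{n-1}\,dr$, while the exponential decay at infinity in \eqref{asymbes} controls the tail.

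Next I would produce the associated Schoenberg symbol $\widetilde F_0$ of \eqref{radial1}. By the computation carried out in Example \ref{examriesz}, for $f=f_{a,\mu}$ one has $\widetilde F_0(r)=c_{n,\mu}\,(r/a)^p\,K_p(ar)$ with $p=\tfrac n2-2\mu$, hence $p=1$ for $\kF_2$ and $p=\tfrac12$ for $\kF_3$; in the latter case \eqref{4.27} shows $\widetilde F_0$ is a constant multiple of $e^{-ar}$. In both cases Corollary \ref{besshoen} gives $\widetilde F_0\in\Phi_\infty$, so $\widetilde F_0$ is nonnegative and monotone decreasing, and the large-argument asymptotics \eqref{asymbes} show it decays exponentially. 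Consequently the majorant requirement \eqref{majorant} is met (one may take a suitable normalization of $\widetilde F_0$ as the majorant $h\in\kM_+$, with $s^{n-1}h(s)\in L^1(\R_+)$ by exponential decay), and by Proposition \ref{nonmonoton} the operator $S_X(\widetilde F_0)$ is bounded.

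With the hypotheses of Theorem \ref{mainriesz} verified, the equivalence (i)$\Leftrightarrow$(iv) of that theorem shows that $\kF_X(f)$ is a Riesz sequence exactly when $X$ is separated; since $X\in\kX_n$ means $d_*(X)>0$, the conclusion follows for $\kF_2$ in $L^2(\R^2)$ and for $\kF_3$ in $L^2(\R^3)$. I expect no genuine obstacle here: the substance was already established in Proposition \ref{gramm2}, Corollary \ref{besshoen}, and Theorem \ref{mainriesz}, and the corollary is a specialization. The only points demanding care are the bookkeeping of the parameters $(n,\mu,p)$ and the verification that the mild singularities of $K_0$ and of $e^{-a|\cdot|}/|\cdot|$ at the origin are square-integrable in dimensions $2$ and $3$, respectively.
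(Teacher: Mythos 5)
Your proposal is correct and follows essentially the same route as the paper: the paper obtains the corollary by singling out the cases $(n,\mu)=(2,0)$ and $(n,\mu)=(3,\tfrac12)$ of Example \ref{examriesz}, which in turn rests on Proposition \ref{gramm2}, Corollary \ref{besshoen} (so that $\widetilde F_0\in\Phi_\infty\subset\kM_+$ with exponential decay), and Theorem \ref{mainriesz}. Your additional checks (square-integrability of the singularities at the origin, positivity of $\widehat f$ via \eqref{furtrans}, and the harmless scalar $\sqrt{\pi/2}$ for $\kF_3$) only make explicit what the paper leaves implicit.
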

%

We show now that a sequence $\kF_X(f)$ can be \emph{minimal}
but \emph{not uniformly minimal}, (so necessarily $d_*(X)=0$),
whenever  $\widehat f\not=0$ a.e. is replaced by the stronger assumption \eqref{belowfur}. Note that in the following
proposition a function $f$ is not even assumed to be radial.

\begin{proposition}\label{prop_minimality}
Given $f\in L^2(\R^n)$, assume that its Fourier transform $\widehat
f$ admits the bound
\begin{equation}\label{belowfur}
|\widehat f(t)|\ge C(1+|t|)^{-p}
 \end{equation}
for some $p>0$. Then the system $\kF_X(f)=\{f(\cdot-x_j)\}_{j\in\N}$
is minimal in $L^2(\R^n)$ for each set
$X=\{x_j\}_{j\in\N}\subset\R^n$ with no finite accumulation points.
\end{proposition}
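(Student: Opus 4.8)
The plan is to establish minimality by producing, for each fixed $k\in\N$, a biorthogonal functional: an element $g_k\in L^2(\R^n)$ with
\begin{equation*}
\langle f_j, g_k\rangle_{L^2(\R^n)}=\delta_{jk}, \qquad j\in\N.
\end{equation*}
Once such a $g_k$ exists, it is orthogonal to every $f_j$ with $j\neq k$ while $\langle f_k,g_k\rangle=1\neq0$, so $f_k\notin\kL(\{f_j\}_{j\neq k})$; since $k$ is arbitrary, $\kF_X(f)$ is minimal by Definition \ref{def_Minimality}.

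First I would pass to the Fourier side. By Parseval's identity together with the shift relation $\widehat{f_j}(t)=\widehat f(t)\,e^{-i(t,x_j)}$ already recorded in the proof of Lemma \ref{lem4.11}, the biorthogonality condition reads
\begin{equation*}
\langle f_j,g_k\rangle_{L^2(\R^n)}=\int_{\R^n} G(t)\,e^{-i(t,x_j)}\,dt=(2\pi)^{n/2}\,\widehat G(x_j), \qquad G(t):=\widehat f(t)\,\overline{\widehat{g_k}(t)}.
\end{equation*}
Hence it suffices to find a function $G$ whose Fourier transform satisfies $\widehat G(x_j)=(2\pi)^{-n/2}\delta_{jk}$. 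This is where the hypothesis that $X$ has no finite accumulation points is decisive: $X$ is then locally finite, so every ball contains only finitely many points of $X$. Choosing a ball $B$ centred at $x_k$ and a bump function $\varphi\in C_c^\infty(\R^n)$ supported in $B$ with $\varphi(x_k)=1$ and $\varphi(x_j)=0$ at the finitely many other points $x_j\in B$ (and $\varphi(x_j)=0$ automatically whenever $x_j\notin B$) produces $\varphi\in C_c^\infty(\R^n)$ with $\varphi(x_j)=\delta_{jk}$ for all $j$. I then take $G$ to be the inverse Fourier transform of $(2\pi)^{-n/2}\varphi$, so that $G$ is a Schwartz function and $\widehat G=(2\pi)^{-n/2}\varphi$, whence $\widehat G(x_j)=(2\pi)^{-n/2}\delta_{jk}$ as required.

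It remains to recover $g_k$ from $G$ and to verify $g_k\in L^2(\R^n)$; this is the only genuine obstacle, and it is exactly where the lower bound \eqref{belowfur} enters. Since $|\widehat f(t)|\ge C(1+|t|)^{-p}>0$ a.e., I may define $\widehat{g_k}:=\overline{G/\widehat f}$ (a well-defined measurable function because $\widehat f\neq0$ a.e.), so that $G=\widehat f\,\overline{\widehat{g_k}}$ holds by construction. The bound \eqref{belowfur} then gives
\begin{equation*}
|\widehat{g_k}(t)|=\frac{|G(t)|}{|\widehat f(t)|}\le \frac{1}{C}\,(1+|t|)^{p}\,|G(t)|,
\end{equation*}
and, $G$ being Schwartz, the right-hand side is rapidly decreasing, hence square-integrable; thus $\widehat{g_k}\in L^2(\R^n)$ and so $g_k\in L^2(\R^n)$. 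Substituting back yields $\langle f_j,g_k\rangle=(2\pi)^{n/2}\widehat G(x_j)=\delta_{jk}$, which completes the biorthogonal construction and the proof of minimality. I expect the only delicate points to be the bump-function interpolation (made possible by local finiteness) and the square-integrability of $g_k$ (secured by \eqref{belowfur}); note that neither step uses radiality of $f$, consistent with the remark preceding the statement.
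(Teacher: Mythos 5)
Your proof is correct and takes essentially the same approach as the paper: both arguments pass to the Fourier side and build a biorthogonal system by choosing a $C_c^\infty$ bump that interpolates $\delta_{jk}$ at the points of $X$ (possible exactly because $X$ has no finite accumulation points) and then dividing by $\overline{\widehat f}$, with the lower bound \eqref{belowfur} securing square-integrability of the resulting biorthogonal element. The only cosmetic differences are that you fix $k$ and construct one bump at a time, and you omit the paper's final projection of the biorthogonal elements onto the closed span, which is not needed for minimality anyway.
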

\begin{proof}
Denote $f_j(\cdot)=f(\cdot-x_j)$. Since the Fourier transform  is a
unitary operator in $L^2(\R^n)$, the system  $\{f_j\}_{j\in\N}$ is
minimal in $L^2(\R^n)$  if and only if so is the system of their
Fourier images  $\{\widehat f_j\}_{j\in\N}$. Note that $\widehat
f_j=\widehat f\,e^{-i(\cdot,x_j)}$, $f=f_1$ (recall that $x_1=0$).
To prove the minimality of $\{\widehat f_j\}_{j\in\N}$, it suffices
(in fact is equivalent) to construct a biorthogonal sequence
$\{h_j\}_{j\in\N}$,
\begin{equation*}
\langle h_j, \widehat f_k \rangle_{L^2(\R^n)}=\int_{\R^n}
h_j(t)\overline{\widehat f(t)}\,e^{i(t,x_k)}dt=\delta_{kj}, \qquad
h_j\in\kL(\{\widehat f_j\}_{j\in\N}).
\end{equation*}
To this end take a smoothing function $u$ and its shifts $u_j$
\begin{equation*}
u(x):=\left\{
        \begin{array}{ll}
         \exp\Bigl(\frac{|x|^2}{|x|^2-1}\Bigr), & |x|\le1; \\
          0, & |x|>1.
        \end{array}
      \right. \qquad u_j(x):=u\biggl(\frac{x-x_j}{\rho_j}\biggr), \quad \rho_j:=\dist(x_j, X\backslash\{x_j\})>0
\end{equation*}
for each $j$, since $X$ has no finite accumulation points. By the
definition $u_j(x_k)=\delta_{kj}$. Since $u\in C_0^\infty$
(infinitely differentiable with compact support), then both $u_j$
and $\widehat u_j$ belong to the Schwartz class. Define
\begin{equation*}
h_{j,1}(t):= (2\pi)^{-\frac{n}2}\,\frac{\widehat
u_j(t)}{\overline{\widehat f(t)}}=
(2\pi)^{-\frac{n}2}\,\frac{\rho_j^n\widehat
u(\rho_jt)}{\overline{\widehat f(t)}}.
\end{equation*}
In view of \eqref{belowfur}, $h_{j,1}\in L^1(\R^n)\cap L^2(\R^n)$,
so
$$ \langle h_{j,1}, \widehat f_k \rangle_{L^2(\R^n)}=
\int_{\R^n} h_{j,1}(t)\overline{\widehat
f(t)}\,e^{i(t,x_k)}dt=(2\pi)^{-\frac{n}2}\,\int_{\R^n} \widehat
u_j(t)\,e^{i(t,x_k)}dt=u_j(x_k)=\delta_{kj}. $$ We are left with
putting $h_j:=\mathbb{P}h_{j,1}$, where $\mathbb{P}$ is a projection
from $L^2(\R^n)$ onto $\kL(\{\widehat f_j\}_{j\in\N})$. The proof is
complete.
\end{proof}
It is easy to construct a set $X$ with $d_*(X)=0$, which has no finite accumulation points.
\begin{example}
Let $f=f_{a,\mu}$ \eqref{rieszradial} with $0\le\mu<n/4$. Condition
\eqref{belowfur}  follows from \eqref{furtrans}, so the system
$\kF_X(f)$ is minimal for each set $X$ of distinct points which has
no finite accumulation  points.
\end{example}
  \begin{remark}
Corollary \ref{RieszSeq_exponents} is crucial in  the study of
certain spectral properties of the Schr\"odinger operator with point
interactions \cite{MalSch12}. The statement on the system $\kF_3$
was proved  in \cite[Theorem 3.8]{MalSch12} in an absolutely
different manner. The appearance of such functions takes its origin
in the following classical  formulae for the resolvent of the
Laplace operator $H_0:=-\Delta$ in $\Bbb R^3$ and $\Bbb R^2$,
respectively,
      \begin{equation}\label{3.2SGolMal}
(H_0 - z I)^{-1}f = \frac{1}{4\pi}\int_{{\mathbb
R}^3}\frac{e^{i\sqrt{z}|x-t|}}{|x-t|}~f(t)~dt, \qquad (H_0 - z
I)^{-1}f =  \frac1{2\pi}\int_{{\mathbb R}^2}
 K_0(\sqrt{-z}|x - t|)~f(t)~dt,
      \end{equation}
(see \cite[formulae (1.1.19), (1.5.15)]{AGHH88}).  Note also that a
special case of Proposition \ref{prop_minimality} regarding
minimality of  the system $\kF_3$  was  proved in \cite[Lemma
3.5]{MalSch12} in a different manner. In the latter case
$$ f(x)=\frac{e^{-a|x|}}{|x|}\,, \qquad \widehat f(t)=\sqrt{\frac2{\pi}}\,\frac1{a^2+|t|^2}\,, $$
and \eqref{belowfur} automatically holds.
   \end{remark}
%


\quad

Leonid Golinskii, \\
\emph{Mathematics Division, Low Temperature Physics Institute, NAS of Ukraine},\\
\emph{47 Lenin ave.},\\
\emph{61103 Kharkov, Ukraine}\\
\emph{e-mail:} golinskii@ilt.kharkov.ua \\

Mark Malamud,\\
\emph{Institute of Applied Mathematics and Mechanics, NAS of Ukraine},\\
\emph{74 R. Luxemburg str.},\\
\emph{83114 Donetsk, Ukraine}\\
 \emph{e-mail:} mmm@telenet.dn.ua
 \\

Leonid Oridoroga, \\
\emph{Donetsk National University}, \\
\emph{24, Universitetskaya Str.}, \\
\emph{83055  Donetsk, Ukraine}  \\
\emph{e-mail:} oridoroga@skif.net
\\


\begin{thebibliography} {20}
%
%
\bibitem{Abr-Ste}
M. Abramovitz, I. Stegun, \textit{Handbook of Mathematical Functions}, Dover, NY 1972.

\bibitem{Akh65}
N.I. Akhiezer, \textit{The  Classical Moment Problem  and Some
Related Questions of Analysis}, Oliver  and Boyd, Edinburgh, 1965
(Russian edition: Moscow, 1961).

\bibitem{AkhGlz78}
N.I. Akhiezer, I.M. Glazman, \emph{Theory of Linear Operators in
Hilbert Spaces},  Ungar, New York, 1961.

%
\bibitem{AGHH88}
S.~Albeverio, F.~Gesztesy, R.~Hoegh-Krohn, H.~Holden, \emph{Solvable
Models in Quantum Mechanics}, Sec. Edition, (with an Appendix by P.
Exner) AMS Chelsea Publ., 2005.
%


\bibitem{BCR}
C. Berg, J.P.R. Christensen, P. Ressel, \textit{Harmonic Analysis on
Semigroups}, Springer-Verlag, New-York, 1984.

\bibitem{beram}
S. Bernstein, \textit{Sur les fonctions absolument monotones}, Acta Math.
{\bf 52} (1929), 1--66.

\bibitem{bdk}
J. Bretagnolle, C.D. Dacunha, J.L. Krivine, \textit{Lois stables et
espaces $L^p$}, Ann. Inst. H. Poincar\'e Probab. Statist., {\bf 2}
(1966), 231--259.


\bibitem{Boch}
S. Bochner, \textit{Monotone Funktionen, Stieltjessche Integrale und
harmonische  Funktionen}, Math. Ann., \textbf{108} (1933), 378-410.

\bibitem{GK69}
I.C. Gokhberg,  M.G. Krein, \textit{Introduction to the Theory of Linear
Nonselfadjoint Operators}, AMS, Providence, RI, 1969.

\bibitem{Garn}
J.B. Garnett, \textit{Bounded Analytic Functions}, Academic Press, 1981.

\bibitem{gn1998}
T. Gneiting, \textit{On $\alpha$-symmetric multivariate characteristic functions},
Journal of Multivariate Analysis, {\bf 64} (1998), 131--147.

\bibitem{gn2013}
T. Gneiting, \textit{Strictly and non-strictly positive definite functions on spheres},
Bernoulli, {\bf 19} (2013), 1327--1349.

\bibitem{gmo2}
L. Golinskii, M. Malamud, and L. Oridoroga,
\textit{On radial positive definite functions}, in preparation.

\bibitem{GMZ11}
N. Goloschapova, M. Malamud, and V. Zastavnyi,
\textit{Radial positive definite functions and spectral theory of
Schr\"odinger operators with point interactions}, Math. Nachr.
{\bf 285} (2012), no. 14-15 (2012), 1839--1859.

\bibitem{gol81}
B.I. Golubov, \textit{On Abel--Poisson type and Riesz means}, Anal. Math. {\bf 7} (1981), 161--184.

\bibitem{gutgne}
P. Guttorp, T. Gneiting, \textit{Studies in the history of probability and statistics XLIX:
On the Mat\'ern correlation family}, Biometrika {\bf 93}, (2006) 989--995.

\bibitem{MalSch12}
M.M. Malamud, K. Schm\"udgen, \textit{Spectral theory of Schr\"odinger
operators with infinitely many point interactions and radial
positive definite functions}, J.  of Functional Analysis, {\bf
v. 263}, No 10 (2012),  p. 3144-3194.


\bibitem{milsam}
K.S. Miller, S.G. Samko, \textit{Completely monotonic functions},
Integral Transforms and Special Functions, {\bf 12} (2001) 389--402.

\bibitem{Nik1}
N.K. Nikolski, \textit{Operators, Functions, and Systems: An Easy Reading},
v.1: Hardy, Hankel, and Toeplitz, AMS, Providence RI, 2002.

\bibitem{Nik2}
N.K. Nikolski, \textit{Operators, Functions, and Systems: An Easy Reading},
v.2: Model Operators and Systems, AMS, Providence RI, 2002.

\bibitem{Pel89}
V.V. Peller, \textit{When is a function of a Toeplitz operator close to
a Toeplitz operator?}, Operator Theory, Birkh\"{a}user, {\bf 42}
(1989), 59-85.

\bibitem{RoRo}
M. Rosenblum and J. Rovnyak, \textit{Hardy Classes and Operator Theory}, Oxford
Univer. Press, 1985.

\bibitem{Ros60}
M. Rosenblum,  \textit{The absolute continuity of Toeplitz's
matrices}, Pacific J. of Math. {\bf 10} (1960), 987--996.

\bibitem{Sch38}
I.J. Schoenberg, \textit{Metric spaces and completely monotone functions},
 Ann. Math. {\bf 39} (1938), 811--841.

\bibitem{Sch38_1}
I.J. Schoenberg, \textit{Metric  spaces   and  positive definite
functions}, Trans. Amer. Math. Soc. {\bf 44} (1938), 522--536.

\bibitem{Stein-Weiss}
E. Stein, G. Weiss, \textit{Introduction to Fourier Analysis on Euclidian Spaces},
Princeton University Press, NJ, 1971.

\bibitem{sun93}
X. Sun, \textit{Conditionally negative definite functions and their application to multivariate
interpolation}, J. of Approx. Theory, {\bf 74}, (1993) 159--180.

\bibitem{TriBel}
R.M. Trigub, E. Bellinsky \textit{Fourier Analysis and Approximation of Functions}
Springer Science Business Media, Kluwer, 2004.

\bibitem{Wat}
G.N. Watson, \textit{A Treatise on the Theory of Bessel Functions}, 2nd ed., Cabridge University Press,
1966.

\bibitem{WelWil}
J. Wells, L. Williams, \textit{Embeddings and Extensions in Analysis}, Springer-Verlag Berlin, Heidelberg,
New York, 1975.

\bibitem{Wend05}
H. Wendland, \textit{Scattered Data Approximation}, Cambridge University Press, 2005.

\bibitem{WhWa}
E.T. Whittaker and G.N. Watson, \textit{A Course of Modern Analysis}, part II: Principal transcendental functions,
Cambridge University Press, 1927.

\bibitem{Wid}
D. Widder, \textit{The Laplace Transform}, Princeton, 1946.

\bibitem{young80}
R.M. Young,   \textit{An Introduction to Nonharmonic Fourier Series}, Academic Press, New York, 1980.

\bibitem{zas2000}
V.P. Zastavnyi, \textit{On positive definiteness of some functions},
 J. Multiv. Anal. {\bf 73} (2000),  55--81.

\end{thebibliography}
\end{document}